\numberwithin{equation}{section}
\newtheorem{theorem}{Theorem}[section]
\newtheorem{Definition}[theorem]{Definition}
\newtheorem{Remark}[theorem]{Remark}
\newtheorem{proposition}[theorem]{Proposition}
\newtheorem{Lemma}[theorem]{Lemma}
\newcommand{\R}{\mathbb R}
\newcommand{\rd}{\mathbb{R}^d}
\newcommand{\n}{\Vert}
\newcommand{\C}{\mathbb C}
\newcommand{\F}{\mathcal{F}}
\newcommand{\w}{\widetilde}
\newcommand{\h}{\mathcal{H}}
\newcolumntype{P}[1]{>{\centering\arraybackslash}p{#1}}
\newcolumntype{M}[1]{>{\centering\arraybackslash}m{#1}}
\def\({\left(}
\def\){\right)}
\def\<{\left\langle}
\def\>{\right\rangle}
\begin{document}
\baselineskip16pt
\title[Hartree-Fock equations in  $L^{p}$ and $\widehat{L}^p$ ]{The Hartree and  Hartree-Fock equations in  Lebesgue $L^p$ and Fourier-Lebesgue $\widehat{L}^p$ spaces}
\author[D. G. Bhimani]{Divyang G. Bhimani}
\address{Department of Mathematics, Indian Institute of Science Education and Research, Dr. Homi Bhabha Road, Pune 411008, India}
\email{divyang.bhimani@iiserpune.ac.in}
\author[S. Haque]{Saikatul Haque*}
\address{Harish-Chandra Research Institute, Chhatnag Road, Jhunsi, Prayagraj (Allahabad) 211019,
 India}
\email{saikatulhaque@hri.res.in}
\subjclass[2010]{35Q40, 35Q55, 42B35 (primary), 35A01 (secondary)}
\keywords{reduced Hartree-Fock and Hartree-Fock equations, local and global well-posedness}

\begin{abstract}  
We establish  some local and global well-posedness  for  Hartree-Fock   equations   of $N$ particles (HFP) with Cauchy data in  Lebesgue   spaces $L^p \cap L^2 $  for  $1\leq p \leq \infty$.   Similar results  are  proven for fractional HFP in    Fourier-Lebesgue  spaces $ \widehat{L}^p \cap L^2 \ (1\leq p \leq \infty).$   On the other  hand, we show that the Cauchy problem for HFP is mildly ill-posed  if  we simply work in $\widehat{L}^p \ (2<p\leq \infty).$  Analogue results hold for  reduced HFP.
In the process, we prove the boundedeness of  various   trilinear  estimates for Hartree type non linearity  in these  spaces which may be of independent interest.
As a consequence, we get natural   $L^p$  and $\widehat{L}^p$ extension of  classical  well-posedness theories  of Hartree and Hartree-Fock equations  with Cauchy data in just $L^2-$based Sobolev spaces.

\end{abstract}
\maketitle

\section{Introduction}

\subsection{Motivation and physical context}
The Hartree-Fock equation (HFE),   defined in  \eqref{hf},  is a key effective equation of quantum physics. It plays a role similar to that of the Boltzmann equation in classical physics. The HFE describes large systems of identical fermions by taking into account the self-interactions of charged fermions as well as an exchange term resulting from Pauli's principle. A semirelativistic version of the HFE was developed in  \cite{frohlich2007dynamical}   for modeling white dwarfs.  The HFE model \cite{lewin2018existence} leads to the Kohn-Sham equation underlying the density functional theory which is exceptionally effective in computations in quantum chemistry and in particular, of the electronic structure of matter.  The HFE is used for several applications in many-particle physics \cite{lipparini2008modern}. For  detailed  background and recent developments  on HFE and beyond, we  refer to the excellent survey  \cite{chenn2019effective} and the references therein.

In   \cite{ laskin2002fractional}   fractional Laplacians have been applied to model physical phenomena.  This was formulated by Laskin \cite{laskin2002fractional} as a result of extending the Feynman path integral from the Brownian-like to L\'evy-like quantum mechanical paths.
Specifically, when  $\alpha=1,$  the fractional Hartree equation,  defined  in \eqref{rhf},  can be used to describe the dynamics of pseudo-relativistic boson stars in the mean-field limit, and when $\alpha=2$ the L\'evy motion becomes Brownian motion. The Hartree equation also arises in the nonlinear optics of nonlocal, nonlinear optical media  \cite{peccianti2012nematicons}.

\subsection{Hartree-Fock equations}\label{shf}
The Hartree-Fock equations of $N$ particles is given by
\begin{equation}\label{hf}
\begin{cases} i\partial_t \psi_k - (-\Delta)^{\alpha/2}\psi_k+\kappa\sum_{l=1}^{N} \left(\frac{e^{-a|x|}}{|x|^{\gamma}} \ast |\psi_l|^{2} \right)\psi_{k} - \kappa\sum_{l=1}^{N} \psi_{l} \left(\frac{e^{-a|x|}}{|x|^{\gamma}} \ast  (\overline{\psi_l} \psi_k)\right)=0\\
\psi_{k|t=0}=\psi_{0,k}
\end{cases}
\end{equation}
where $a\geq0$, $t\in \R, \psi_k:\R^d\times \R \to \C,$ $k=1,2,..., N$, $0<\gamma<d,$  $ \kappa$ is constant,  and $\ast$ denotes the convolution in $\mathbb R^d.$ 
The fractional Laplacian is defined as 
\begin{eqnarray*}
\mathcal{F}[(-\Delta)^{\alpha/2}u] (\xi) = (2\pi)^\alpha |\xi|^{\alpha} \mathcal{F}u (\xi),\quad 0<\alpha < \infty
\end{eqnarray*}
where $\mathcal{F}$ denotes the Fourier transform. 
The Hartree factor 
\begin{eqnarray*}
H_\psi= \kappa\sum_{l=1}^{N} \left(\frac{e^{-a|x|}}{|x|^{\gamma}} \ast |\psi_l|^2 \right)
\end{eqnarray*}
describes the self-interaction between charged particles as a repulsive force if  $\kappa>0$, and an attractive force   if $\kappa<0.$  In  $H_\psi$ the cases $a=\gamma=1$ and $a=0, \gamma=1$  corresponds to, well-known, \textbf{Yukawa}  and \textbf{Coulomb potentials} respectively. The last term on the left side of  \eqref{hf} is the so-called ``exchange term (Fock term)"
\[F_\psi(\psi_k)=\kappa\sum_{l=1}^{N} \psi_{l} \left(\frac{e^{-a|x|}}{|x|^{\gamma}} \ast (\overline{\psi_l} \psi_k) \right)\]
 which is a consequence of the Pauli principle and thus applies to fermions.  In the mean-field limit $(N\to \infty)$, this term is negligible compared to the Hartree factor.
 In this case,~\eqref{hf} is replaced by the  $N$ coupled equations, the so-called  \textbf{reduced Hartree-Fock equations}:
\begin{equation}\label{rhf}
i\partial_t \psi_k - (- \Delta)^{\alpha/2} \psi_k+\kappa\sum_{l=1}^{N} \left(\frac{e^{-a|x|}}{|x|^{\gamma}} \ast |\psi_l|^2 \right)\psi_{k}=0,\quad
\psi_{k|t=0}=\psi_{0,k}.
\end{equation}
In particular, when $a=0, N=1,$ and $\alpha=2,$ \eqref{rhf} is the classical  \textbf{Hartree equation}. 
We denote by $(\#)$ either  \eqref{hf} with $N\geq 2$  or \eqref{rhf} with $N\geq1$.

  Fr\"ohlich-Lenzmann  \cite[Theorem 2.1]{frohlich2007dynamical} proved  that $(\#)$ with Coulomb type  self-interaction  is  locally well-posed in  $H^{s}(\R^3) \ (s\geq 1/2).$ Moreover, they  \cite[Theorem 2.2]{frohlich2007dynamical}  proved global  existence   for  sufficiently small initial data.    Carles-Lucha-Moulay \cite[Section IV]{carles2015higher}    studied  global well-posedness  of \eqref{hf}   for Coulomb type  self-interaction and with  an external potential,  and obtained  some $H^{s}(\R^3)$ regularity.
Lenzmann \cite[Theorems 1, 2 and  3]{lenzmann2007well} proved some local and global  well-posedness  for Hartree equation with Yukawa type self-interaction in  $H^{s}(\R^3)$ with $s\geq 1/2.$  See also \cite{cho2013cauchy, cho2017orbital,  tarulli2019decay}.

Thus most authors have studied well-posedness  for  the Cauchy problem of  $(\#)$ in  $L^2$-based Sobolev spaces. This is of course very crucial from the physical point of view. Also from the mathematical point of view, 
the major reason behind this is the fact that the free Schr\"odinger  propagator  $U(t):=e^{it\Delta}:L^{p}(\R^d) \to L^p({\R^d})$   if and only if $p=2.$ This raises a natural question: Can we expect well-posedness theory for $(\#)$  in function spaces$-$which are not just based on $L^2$-integrability?  The  fantastic progress has been made  for this in the last decade. In fact,  Zhou  \cite{zhou2010cauchy} proved some well-posedness for nonlinear Schr\"odiger equation (NLS)  in  some $L^p$-Sobolev spaces for $p<2.$
Then  local  well-posedeness  for  Hartree equation was  studied in \cite{hoshino2019trilinear, hyakuna2018multilinear, hyakuna2018global,  hyakuna2019global} with Cauchy data in $L^p(\rd)\cap L^2(\rd)$ for $1\leq p \leq 2$ (cf.   \cite{wang2007global,  ruzhansky2012modulation, bhimani2016functions}).
 The method of proofs employed  in these works heavily rely on the Zhou spaces \cite{zhou2010cauchy} functional frame work$-$which is interesting but quite technical in itself.  And  there is a subtle flaw in the proof in \cite{hyakuna2018global}, see Remark \ref{r2} \eqref{HF} for detail.  Besides, this  raises  question: can we extend  these works in $L^p(\rd)\cap L^2(\rd)$ for $2<p \leq \infty$? This has been widely open question since the pioneering work of Zhou for NLS in  \cite{zhou2010cauchy}.
The novelty of this paper  is that we could establish local well-posednenss for $(\#)$ in $L^p(\rd)\cap L^2(\rd)$  without using Zhou spaces functional frame work.   This new approach simplifies the method of proofs and   enables us to cover wider range $1\leq  p \le \infty$.  Theorem \ref{lw} thus  completes the study of local well-posedness for Hartree equation in Lebesgue spaces $L^p(\rd)\cap L^2 (\rd)$ for all $1\leq p \leq \infty$.    Specifically,  we have following theorem.
\begin{theorem}[Local well-posedness in $L^p\cap L^2$]\label{lw} 
Let $\gamma$ satisfy one of the followings
\begin{itemize}
\item $0<\gamma<   \min\{1,2d(\frac{1}{p}-\frac{1}{2})\}$\quad for\quad $1\leq p\leq\frac{4}{3}$
\item $0<\gamma< \min\{1, \frac{d}{2}\}$\quad for\quad $\frac{4}{3}\leq p\leq\infty$.
\end{itemize}
Assume  that  $\psi_0= (\psi_{0,1}, ..., \psi_{0,N})\in  \left( L^p(\rd)\cap L^2(\rd)\right)^N$ and $\alpha=2.$ Then there exists $T>0$ and a unique local solution $ (\psi_1,..., \psi_N)$ to  $(\#)$ such that
$$\left (U(-t)\psi_1(t),..., U(-t)\psi_N(t)\right) \in \big( C([0,T],L^p(\rd)\cap L^2(\rd)) \big)^N.$$
\end{theorem}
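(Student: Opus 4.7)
The plan is to run a standard Banach fixed-point argument on the Duhamel form of the equation, after conjugating by the free propagator so that the natural solution space is preserved. Since $U(t)=e^{it\Delta}$ does not act boundedly on $L^p$ for $p\neq 2$, we set $v_k(t):=U(-t)\psi_k(t)$ and rewrite $(\#)$ with $\alpha=2$ as
\begin{equation*}
v_k(t)=\psi_{0,k}-i\int_0^{t}U(-s)\,\mathcal{N}_k\bigl(U(s)v(s)\bigr)\,ds, \qquad k=1,\dots,N,
\end{equation*}
where $\mathcal{N}_k(\psi)$ is the Hartree term (together with the Fock term, in the case of \eqref{hf}) built from the kernel $K(x)=e^{-a|x|}/|x|^\gamma$. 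Thus finding $(\psi_1,\dots,\psi_N)$ with $U(-\cdot)\psi_k\in C([0,T],L^p\cap L^2)$ is equivalent to finding a fixed point $v=(v_1,\dots,v_N)$ of the map $\Phi=(\Phi_1,\dots,\Phi_N)$ given by the right-hand side above.

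Next, I would set up the functional framework. Equip $X_T:=\bigl(C([0,T],L^p(\R^d)\cap L^2(\R^d))\bigr)^N$ with the norm $\|v\|_{X_T}:=\max_k\sup_{t\in[0,T]}\bigl(\|v_k(t)\|_{L^p}+\|v_k(t)\|_{L^2}\bigr)$, and let $B_R\subset X_T$ be the closed ball of radius $R:=2\max_k\|\psi_{0,k}\|_{L^p\cap L^2}$. The heart of the argument is the trilinear estimate
\begin{equation*}
\bigl\|U(-s)\bigl[(K\ast(\overline{U(s)v_1}\,U(s)v_2))\,U(s)v_3\bigr]\bigr\|_{L^p\cap L^2}\;\lesssim\;\prod_{j=1}^{3}\|v_j\|_{L^p\cap L^2},
\end{equation*}
which is exactly the class of trilinear bounds announced in the abstract and proved earlier in the paper under the displayed restrictions on $\gamma$ (splitting $K=K\chi_{|x|\le1}+K\chi_{|x|>1}$ to handle the singular and integrable pieces separately, and using Hausdorff--Young together with the $L^2$-unitarity of $U(s)$ to pass between $L^p$ and $L^{p'}$ norms). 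Both the pure Hartree nonlinearity and the Fock nonlinearity have the same trilinear structure, so a single estimate controls all terms in $\mathcal{N}_k$.

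Granting this pointwise-in-time trilinear bound, integrating in $s\in[0,t]$ gives
\begin{equation*}
\|\Phi(v)-\Phi(w)\|_{X_T}\;\lesssim\;T\bigl(\|v\|_{X_T}^2+\|w\|_{X_T}^2\bigr)\|v-w\|_{X_T},
\end{equation*}
and similarly $\|\Phi(v)\|_{X_T}\le R/2+CT\|v\|_{X_T}^3$. Choosing $T>0$ small enough (depending only on $R$ and the trilinear constant) makes $\Phi$ a contraction on $B_R$, so Banach's fixed-point theorem delivers a unique $v\in B_R$, and hence the desired $(\psi_1,\dots,\psi_N)$; uniqueness in the full space $X_T$ follows by the standard continuous-in-$T$ argument. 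The only genuinely nontrivial step is the trilinear estimate, whose admissible range of $\gamma$ produces exactly the two cases in the statement; once that is in hand the fixed-point step is routine.
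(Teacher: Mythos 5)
Your reduction to a fixed point for $v_k=U(-t)\psi_k$ is the right starting move, but the estimate you call the ``heart of the argument'' is not what the paper proves, and as stated it hides exactly the difficulty the theorem is about. The paper's trilinear bounds are not for the composed operator $U(-s)\bigl[(K\ast(\overline{U(s)v_1}\,U(s)v_2))\,U(s)v_3\bigr]$; they are for the operator $\widehat{\h}_{a,\gamma,s}(f,g,h)=\bigl[(S_{a,s}\ast|\cdot|^{\gamma-d})(f\ast\bar g)\bigr]\ast h$, which appears only after the factorization formula $U(t)=M_tD_t\F M_t$ (Lemma \ref{2}): one has
\begin{equation*}
U(-s)\,\h_{a,\gamma}\bigl(U(s)v_1,U(s)v_2,U(s)v_3\bigr)\asymp |s|^{-\gamma}\,M_s^{-1}\widehat{\h}_{a,\gamma,s}\bigl(M_sv_1,RM_sv_2,M_sv_3\bigr),
\end{equation*}
so the correct fixed-time bound carries the singular weight $|s|^{-\gamma}$. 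This weight cannot be dropped; its time-integrability is precisely where the hypothesis $\gamma<1$ enters, and the contraction gains $T^{1-\gamma}$, not $T$. Your proposal never produces the factorization step, never sees the $s^{-\gamma}$ factor, and never uses $\gamma<1$, so the claimed uniform-in-$s$ trilinear estimate is not available (and not in the paper).

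The second gap concerns the range of $p$. The genuine trilinear estimate in $L^p\cap L^2$, namely $\|\widehat{\h}_{a,\gamma,t}(f_1,f_2,f_3)\|_{L^p\cap L^2}\lesssim\prod_j\|f_j\|_{L^p\cap L^2}$, is proved in Proposition \ref{le} only for $1\le p<2$ and $0<\gamma<2d(\frac1p-\frac12)$; the paper states explicitly that its validity for $p\in(2,\infty]$ is unknown. Hence a pointwise-in-time contraction of the kind you describe only yields the first bullet of the theorem (Case II of the paper's proof). For $\frac43\le p\le\infty$ with $\gamma$ up to $\min\{1,\frac d2\}$ (in particular all $p>2$), the paper must work in a ``twisted'' ball $\mathcal{V}_b^T$ whose norm also controls $U(t)v(t)$ in Strichartz-type mixed norms, split $\widehat{\h}_{a,\gamma,s}=\widehat{\h}^1_{a,\gamma,s}+\widehat{\h}^2_{a,\gamma,s}$ according to the near/far decomposition of $|\cdot|^{\gamma-d}$, estimate $\widehat{\h}^1$ by the $L^2$--$L^2$--$L^p$ bound and $\widehat{\h}^2$ via Lemma \ref{13} together with Strichartz and Hardy--Littlewood--Sobolev estimates. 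None of this is ``routine'' once the fixed-time estimate fails, so your argument as written does not prove the second case of the statement.
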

 
 The linear counterpart problem of  $(\#)$ (free Schr\"odinger equation)  is ill-posed in $L^p(\R^d)$ for $p\neq 2.$ But Theorem \ref{lw} reveals that after a linear transformation using the semigroup $U(-t)$ generated by the linear problem,  $(\#)$ is locally well-posed in $L^p(\rd)\cap L^2(\rd).$    We will give two different proofs of Theorem \ref{lw}. In the first proof  Zhou spaces  will play no role, this  contrasts with the proof given in \cite{hyakuna2018global} for the  Hartree equation. In the second proof we make use of Zhou spaces to get  the local existence. (Our approach differs from \cite{hyakuna2018global}, see Remark \ref{SR2} for detail.)  In this case,  local solutions enjoy some Zhou spaces regularity. 
 
\begin{Remark} \label{r2}
\begin{enumerate}
\item The proof of Theorem \ref{lw} relies on a  factorization formula for $U(t)$ stated in 
Lemma \ref{2}, trilinear estimates (Subsection \ref{TE}),  and Strichartz estimates.  For the detailed proof strategy, see  Remark \ref{mip}.

 \item\label{HF} Local well-posedness established  for Hartree equation in \cite[Theorem 1.3]{hyakuna2018global} in $L^p\cap L^2$ for $1<p<2$ used Zhou spaces approach together  with some clever ideas. However, there is a flaw in the proof, see Remark \ref{SR1} for detail.  We overcome this  issue by not decomposing Duhamel type operator $\mathcal{D}_{a, \gamma}$, see \eqref{dhtp}. See the proof of Lemma \ref{8} and Remark \ref{SR2}.

\item We do not know factorization formula for fractional Schr\"odinger propagator $e^{-it (-\Delta)^{\alpha/2}}$  $(\alpha \neq 2)$, and so the analogue of  Theorem \ref{lw} for $\alpha\neq 2$, remains an open question.

\end{enumerate}
\end{Remark}

The local solutions of Theorem \ref{lw} can be extended globally, under an additional assumption on $\gamma$. Specifically, we have the following theorem:

\begin{theorem}[Global well-posedness in  $L^p\cap L^2$]\label{global1}
Let $\alpha=2$ and 
$0<\gamma<\min\{1, \frac{d}{2}\}$.
Then the local solution to $(\#)$ given by Theorem \ref{lw}  extends to a global one such that 
$$\left (U(-t)\psi_1(t),..., U(-t)\psi_N(t)\right) \in \big( C(\R,L^p(\rd)\cap L^2(\rd)) \big)^N.$$
Moreover, it follows that  $ (\psi_1(t),..., \psi_N(t)) \in \left(C(\R, L^2(\rd)) \right)^N$ and that if $1\leq p\leq2$ then the global solution  enjoys the  following smoothing  effect in terms  of special integrability:
\[ (\psi_1(t),..., \psi_N(t))\mid_{(\R \setminus \{0\} \times \R^d)} \in \big(C(\R \setminus \{ 0\}, L^{p'}(\rd)) \big)^N.\]
\end{theorem}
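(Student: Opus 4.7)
The plan is to upgrade the local solution of Theorem \ref{lw} to a global one via a continuation argument driven by $L^2$ conservation, and then to read off the $L^{p'}$ smoothing directly from the dispersive estimate for $U(t)$.

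\textbf{Step 1: mass conservation.} First I would verify that the total $L^2$-mass $\sum_{k=1}^N\|\psi_k(t)\|_{L^2}^2$ is conserved on the local lifespan. For \eqref{rhf} this is classical since the Hartree factor $H_\psi$ is pointwise real, so in fact each $\|\psi_k(t)\|_{L^2}$ is separately constant. For \eqref{hf}, pairing the $k$-th equation with $\psi_k$ in $L^2$, summing in $k$, and using the $(x,y)$-symmetry of $e^{-a|x-y|}/|x-y|^\gamma$ together with $(k,l)$-symmetry shows that the Fock contribution to $\sum_k \mathrm{Im}\,\langle F_\psi(\psi_k),\psi_k\rangle$ equals its own complex conjugate, hence is real and drops out. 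In both cases each $\|\psi_k(t)\|_{L^2}$ stays uniformly controlled by the initial data.

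\textbf{Step 2: global continuation.} Under the standing hypothesis $0<\gamma<\min\{1,d/2\}$ the conclusion of Theorem \ref{lw} applies for every $p\in[1,\infty]$, and I would next show that the local existence time can be chosen to depend only on the conserved $L^2$-norm. Running the contraction of Theorem \ref{lw} on a time slab $[t_n,t_n+\tau]$ and invoking the trilinear estimates of Subsection \ref{TE} on the Duhamel integral yields, for $v_k(t):=U(-t)\psi_k(t)$, an inequality of the schematic form
$$\|v_k\|_{L^\infty_t([t_n,t_n+\tau];L^p)}\leq \|v_k(t_n)\|_{L^p}+C\,\tau^{\theta}\Bigl(\sum_{l=1}^N\|\psi_l\|_{L^\infty_tL^2}^2\Bigr)\|v_k\|_{L^\infty_t([t_n,t_n+\tau];L^p)},$$
with a positive power $\tau^\theta$ gained from dispersive/Strichartz decay. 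Since the $L^2$-factor is conserved by Step 1, $\tau$ can be taken depending only on the initial $L^2$-data so that the prefactor is at most $1/2$, giving $\|v_k(t_n+\tau)\|_{L^p}\leq 2\|v_k(t_n)\|_{L^p}$. Iterating this step forwards and backwards in time produces the locally finite bound $\|v_k(t)\|_{L^p}\leq 2^{|t|/\tau}\|v_{0,k}\|_{L^p}$, which rules out $L^p\cap L^2$-blow-up and extends $v_k$ continuously to all of $\R$. Strong continuity of $U(t)$ on $L^2$ then yields $\psi_k\in C(\R,L^2)$.

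\textbf{Step 3: the $L^{p'}$ smoothing.} Writing $\psi_k(t)=U(t)v_k(t)$ with $v_k(t)\in L^p\cap L^2\subseteq L^p$ for $1\le p\le 2$, the classical dispersive estimate
$$\|U(t)f\|_{L^{p'}(\R^d)}\leq (4\pi|t|)^{-d\left(\frac{1}{p}-\frac{1}{2}\right)}\|f\|_{L^p(\R^d)},\qquad t\neq 0,$$
immediately gives $\psi_k(t)\in L^{p'}$ for every $t\neq 0$. Continuity of $t\mapsto\psi_k(t)$ in $L^{p'}$ on $\R\setminus\{0\}$ follows from the continuity of $t\mapsto v_k(t)$ into $L^p$ combined with the joint continuity of $(t,f)\mapsto U(t)f$ from $(\R\setminus\{0\})\times L^p$ to $L^{p'}$, a standard density argument using Schwartz functions.

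\textbf{Main obstacle.} The delicate point is Step 2: the trilinear bound must be arranged so that exactly one factor of $v_k$ enters in $L^p$ while the other two sit in the conserved $L^2$-norm, and so that a genuine positive power $\tau^\theta$ is produced uniformly in $p\in[1,\infty]$. This is precisely where the strict constraint $\gamma<d/2$, as opposed to the weaker, $p$-dependent range allowed by Theorem \ref{lw}, is used: it supplies enough dispersive room in the convolution $(e^{-a|x|}/|x|^\gamma)\ast|\psi|^2$ to perform this separation independently of $p$, and is what makes the continuation time $\tau$ depend only on the conserved mass.
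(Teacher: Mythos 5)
Your Steps 1 and 3 are essentially fine and coincide with what the paper does (the $L^{p'}$ smoothing is obtained there from the factorization of $U(t)$ together with Hausdorff--Young, which is exactly the dispersive bound you quote, and $C(\R,L^2)$ comes from Proposition \ref{miF}). The genuine gap is in Step 2. The schematic inequality you assert -- Duhamel term controlled by $\tau^{\theta}\bigl(\sum_l\|\psi_l\|_{L^\infty_t L^2}^2\bigr)\|v_k\|_{L^\infty_t L^p}$ -- is not what the trilinear estimates of Subsection \ref{TE} provide, and it is not available for the whole nonlinearity. After the twist, the Duhamel integrand is $s^{-\gamma}\widehat{\h}_{a,\gamma,s}=s^{-\gamma}\bigl(\widehat{\h}^1_{a,\gamma,s}+\widehat{\h}^2_{a,\gamma,s}\bigr)$, and only the near-origin piece obeys $\|\widehat{\h}^1_{a,\gamma,s}(f_1,f_2,f_3)\|_{L^p}\lesssim\|f_1\|_{L^2}\|f_2\|_{L^2}\|f_3\|_{L^p}$ (Remark \ref{le'}). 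For the tail piece $\widehat{\h}^2_{a,\gamma,s}$ the kernel $k_2*S_{a,s}$ lies only in $L^{\rho}$ for $\rho>\frac{d}{d-\gamma}>1$, so the other two entries cannot both be placed in $L^2$; the paper's bound \eqref{c2} (via Lemma \ref{13}) inevitably produces factors $\|\psi_k(s)\|_{L^{2\rho}}\|\psi_l(s)\|_{L^{2\rho}}$ with $2\rho>2$, which are neither conserved nor controlled by the mass. Hence your claim that the continuation step $\tau$ can be chosen depending only on the conserved $L^2$ data is unjustified, and the doubling iteration does not close as written. Your closing paragraph also misattributes the role of $\gamma<d/2$: it is used to choose $\rho\in(\frac{d}{d-\gamma},2)$ so that $(q_2,2\rho)$ is a Strichartz-admissible pair, not to put both remaining factors into the conserved mass.

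The paper resolves precisely this point by a different mechanism: it first identifies the local solution with the global $L^2$ solution of Proposition \ref{miF} (Lemma \ref{ul}), whose Strichartz norms $\|\psi\|_{L^{q_2}_{T_0}(L^{2\rho})}$ are finite on every finite interval; inserting these into the $\widehat{\h}^2$ estimate and applying Gronwall yields the a priori bound $\sup_{t\in[0,T_0)}\|\phi(t)\|_{(L^p)^N}<\infty$ (Proposition \ref{ga}), which together with the blow-up alternative (Proposition \ref{gb}, resting on the fact that the local existence time of Theorem \ref{lw} depends only on $\|\psi_0\|_{(L^p\cap L^2)^N}$) gives globality. Your slab-by-slab scheme can be repaired in the same spirit: on intervals of length $\tau=\tau(\|\psi_0\|_{L^2})$ the $L^2$ theory gives $\|\psi\|_{L^{q_2}([t_n,t_n+\tau];L^{2\rho})}\lesssim\|\psi_0\|_{L^2}$, and with that extra input the iteration closes -- but this input is Strichartz control of the global $L^2$ flow, not mere mass conservation, and it must be stated and used.
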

We  note that formally the solution of (\#) satisfies  (see e.g, \cite{carles2015higher})
the conservation of mass:
\[\|\psi_k(t)\|_{L^2}=\|\psi_{0,k}\|_{L^2}  \   \   ( t\in \R, k=1,..., N). \]
Exploiting this mass conservation law, Proposition \ref{miF} below, Strichartz estimates, and blow-up alternative,  we prove the above global existence.

\begin{Remark} The sign of  $\kappa$  in Hartree and Fock terms determines the defocusing and  focusing character of the nonlinearity. We shall see that  this  will play no role in our analysis, as we do not use the  conservation  of energy of $(\#)$ to achieve global existence. This  contrasts with  well-posedness scenario  in $H^{1/2}(\R^3)$. For example, in \cite[Theorem 2.3]{frohlich2007dynamical}  it is proved that radially symmetric data with negative energy lead to blow-up solutions in finite time for \eqref{rhf} with $\alpha=\gamma=1$. 

For higher order Hartree-Fock equation Carles et. al. in \cite[Corollary 4.7]{carles2015higher}  obtained propagation of $H^s(\R^3)$-regularity with $s\in\mathbb{N}$.  We  plan to address similar result in the spaces involving $L^p(\rd)$-integrability.
\end{Remark}
We now turn our attention to the well-posedness of  $(\#)$ in the Fourier-Lebesgue spaces $\widehat{L}^p(\rd)$   (with $1\leq p \leq \infty$) defined by  
\begin{equation*}\label{hs}
\widehat{L}^p(\rd)=\big\{f\in\mathcal{S}'(\rd): \n f\n_{\widehat{L}^p}:=\n\F f\n_{L^{p'}} < \infty \big\}
\end{equation*}
where $\frac{1}{p}+\frac{1}{p'}=1.$ 
We note that by Hausdroff-Young inequality,  $L^p(\R^d) \subset \widehat{L}^p(\R^d)$ for $1\leq p \leq 2$ and $ \widehat{L}^p(\R^d) \subset L^p(\R^d)$ for $2\leq p \leq \infty.$ We denote by $L^2_{rad}(\R^d)$,  space of radial functions in  $L^2(\R^d)$. Now we state the following theorem:
\begin{theorem}[Local well-posedness in $ \widehat{L}^p\cap L^2$]\label{lhw}
 Let 
\begin{equation*}
X= \begin{cases}  \widehat{L}^p(\rd)\cap L^2(\rd) & \text{if} \  a\geq0,\ \alpha=2,\ 0<\gamma<\min\{2,\frac{d}{2}\},\  p\in [1, \infty]\\
\widehat{L}^p(\rd) \cap L_{rad}^{2}(\rd) & \text{if} \  a\geq0,\ d\geq2,\ \frac{2d}{2d-1}<\alpha<2, \ 0<\gamma<\min\{\alpha,\frac{d}{2}\},\  p\in [1, \infty]\\
\widehat{L}^p(\rd)\cap L^2(\rd) & \text{if} \ a=0,\ 0<\alpha<\infty,\ 0<\gamma<   2d(\frac{1}{2}-\frac{1}{p}) , p\ \in (2, \infty]\\
\widehat{L}^p(\rd) & \text{if}\  a>0,\ 0<\alpha<\infty,\ 0<\gamma<  2d(\frac{1}{2}-\frac{1}{p}),  p\ \in (2, \infty].
\end{cases}
\end{equation*}
Assume that  $(\psi_{0,1}, ..., \psi_{0,N}) \in X^N.$ Then there  exist $T>0$ and a unique local solution $ (\psi_1,..., \psi_N)$ to $(\#)$ such that
 \[\left(\psi_1(t), ..., \psi_N(t)\right) \in  \left(C([0, T], X\right))^N.\]
  Moreover,  the map  $(\psi_{0,1}, ..., \psi_{0,N}) \mapsto  (\psi_{1}, ..., \psi_{N})$ is locally Lipschitz from   $\widehat{L}^p(\mathbb R^d)\cap L^2(\rd)$ to   $\big(C([0, T], \widehat{L}^p (\R^d)\cap L^2(\rd))\big)^N.$ 
\end{theorem}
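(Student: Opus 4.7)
The plan is to close a standard contraction mapping argument on the Duhamel integral equation
\begin{equation*}
\psi_k(t) = U(t)\psi_{0,k} - i\kappa\int_0^t U(t-s)\bigl[H_\psi\psi_k - F_\psi(\psi_k)\bigr](s)\,ds,
\end{equation*}
with $U(t) = e^{-it(-\Delta)^{\alpha/2}}$. The decisive structural feature that distinguishes the proof from that of Theorem~\ref{lw} is that $U(t)$ acts on the Fourier side as multiplication by the unimodular symbol $e^{-it(2\pi|\xi|)^\alpha}$, hence is an \emph{isometry} on $\widehat{L}^p(\R^d)$ for every $p\in[1,\infty]$ and on $L^2(\R^d)$ (and preserves radial symmetry). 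Consequently, in every one of the four regimes the linear piece is trivial, $\|U(t)\psi_{0,k}\|_X=\|\psi_{0,k}\|_X$, and no factorization of $U(t)$ or $L^p$--$L^{p'}$ dispersive bound is required.

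The entire work thus reduces to a trilinear estimate of the shape
\begin{equation*}
\left\|\left(\tfrac{e^{-a|x|}}{|x|^\gamma}\ast(\bar u v)\right)w\right\|_X \lesssim \|u\|_X\|v\|_X\|w\|_X,
\end{equation*}
to be extracted from the trilinear estimates of Subsection~\ref{TE} for each of the four parameter regimes. Concretely: in Regime~1 ($\alpha=2$, $\gamma<\min\{2,d/2\}$) the $L^2$-piece follows from the bound $\|V\ast(\bar u v)\|_{L^\infty}\|w\|_{L^2}$ controlled by a Hardy--Littlewood--Sobolev estimate on $V=e^{-a|x|}/|x|^\gamma$, while the $\widehat{L}^p$-piece, after a Fourier transform, becomes an iterated convolution bound handled by Young and Hausdorff--Young. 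In Regime~2 ($d\ge2$, fractional $\alpha$, radial data) radial Strichartz estimates for the fractional Schr\"odinger group supply the dispersion required for the $L^2$ part, which is why both the range $\alpha>\tfrac{2d}{2d-1}$ and the radial restriction enter. In Regime~3 ($a=0$, $p>2$) one uses $\widehat V(\xi)\sim|\xi|^{\gamma-d}$ and a Hardy--Littlewood--Sobolev estimate in the frequency variable; in Regime~4 ($a>0$, $p>2$) one has $V\in L^1$ and $\widehat V\in L^\infty$, so convolution with $V$ is bounded on every $L^r$, which in particular removes the need for the $L^2$-intersection.

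With the trilinear estimate in hand, denote by $\Phi$ the Duhamel map on $C([0,T],X)^N$. The unitarity of $U(t)$ on $X$ combined with the trilinear bound yields
\begin{equation*}
\|\Phi(\psi)(t)\|_X \le \|\psi_0\|_X + C|\kappa|\,T\sup_{s\in[0,T]}\|\psi(s)\|_X^3,
\end{equation*}
and, by telescoping the trilinear form slot-by-slot, a matching difference bound for $\|\Phi(\psi)-\Phi(\varphi)\|$. Choosing $T=T(\|\psi_0\|_X)$ small enough makes $\Phi$ a strict contraction on a closed ball; the unique fixed point is the desired local solution, and the same difference estimate delivers the locally Lipschitz dependence on the initial datum. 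The chief obstacle is the trilinear estimate in Regime~2: the simultaneous thresholds $\alpha\in(\tfrac{2d}{2d-1},2)$ and $\gamma<\min\{\alpha,d/2\}$ point to a delicate matching of a radial fractional Strichartz pair against the singularity of the kernel, and this is where the hypotheses are used in the tightest way; Regimes 3 and 4 are conceptually easier because, for $p>2$, the computation lives almost entirely on the Fourier side.
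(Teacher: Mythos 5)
Your overall frame (Duhamel plus contraction, exploiting that $U_\alpha(t)$ is an isometry on $\widehat{L}^p$ and on $L^2$, so no twisting is needed) matches the paper, and your treatment of Regimes 3 and 4 is essentially the paper's Case II: there one really does have a pure trilinear bound $\|\h_{a,\gamma}(f_1,f_2,f_3)\|_{X}\lesssim\prod_j\|f_j\|_{X}$ (Proposition \ref{lhe}\eqref{lhec}, proved on the Fourier side with $\F K\in L^{p/2}$ when $a>0$, and with the split kernel plus the $L^2$-intersection when $a=0$), and a contraction in $L_T^\infty(X)$ with a plain factor $T$ closes.

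The gap is in Regimes 1 and 2, i.e.\ $p\in[1,\infty]$ with $\gamma$ up to $\min\{\alpha,\tfrac{d}{2}\}$. Your proposal reduces everything to a trilinear estimate of the shape $\|(V\ast(\bar u v))w\|_X\lesssim\|u\|_X\|v\|_X\|w\|_X$ and then closes with $\|\psi_0\|_X+CT\sup_s\|\psi(s)\|_X^3$; no such estimate is available in that range, and the specific mechanism you invoke does not work. Hardy--Littlewood--Sobolev never produces an $L^\infty$ bound on $V\ast(\bar u v)$, and any Young/H\"older route forces you to put $\bar u v$ in some $L^r$ with $r>1$, i.e.\ $u,v\in L^{2\rho}$ with $2\rho>2$; for $p\le 2$ the norm of $\widehat{L}^p\cap L^2$ gives no control of any $L^q$ with $q>2$, so the $X$-norms alone cannot close the estimate. (The paper's pure trilinear bound in this direction, Proposition \ref{lhe}\eqref{lheb}, only reaches $\gamma<d(\tfrac1p-\tfrac12)$ for $p<2$ and says nothing for $p\ge2$.) The paper's Case I therefore works in a resolution space augmented by Strichartz norms $\|v\|_{L_T^{q_1}(L^r)}$ and $\|v\|_{L_T^{2q_2}(L^{2\rho})}$ with $(q_1,r)$, $(2q_2,2\rho)$ $\alpha$-fractional admissible and $\tfrac{d}{d-\gamma}<\rho\le2$: the $\widehat{L}^p$-norm of the nonlinearity is bounded by $\big(\|\psi_k\|_{L^2}\|\psi_l\|_{L^2}+\|\psi_k\|_{L^{2\rho}}\|\psi_l\|_{L^{2\rho}}\big)\|\psi_m\|_{\widehat{L}^p}$ after splitting $\F K=k_1\ast h_a+k_2\ast h_a$, and the auxiliary norms are propagated through the Duhamel term by Keel--Tao (for $\alpha=2$) or the radial fractional Strichartz estimate of Proposition \ref{fst}\eqref{fst2} combined with H\"older and Hardy--Littlewood--Sobolev, producing time factors $T^{1/q_2'}$ and $T^{1-\gamma/\alpha}$ rather than $T$. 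This is exactly where the hypotheses you need to explain come from: $\gamma<\tfrac{d}{2}$ makes a $\rho\in(\tfrac{d}{d-\gamma},2]$ exist, $\gamma<\alpha$ makes the Strichartz time exponent positive, and the radial restriction with $\tfrac{2d}{2d-1}<\alpha<2$, $d\ge2$ is forced by the fractional Strichartz input for all components, not only "the $L^2$ part". You gesture at radial Strichartz in Regime 2, but your announced contraction scheme is incompatible with it (it has no mixed space-time norms in the ball or in the distance), so as written the argument cannot close in Regimes 1 and 2 and does not account for the threshold $\gamma<\alpha$.
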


The fractional Schr\"odinger propagator $U_{\alpha}(t):=e^{-it(-\Delta)^{\alpha/2}}$ is bounded in $\widehat{L}^p(\R^d)$ for all $1\leq p \leq \infty$ (see Lemma \ref{4}). Hence, we do not need to transfer $(\#)$ using the semigroup  $U_{\alpha}(-t)$  to establish  local existence in $\widehat{L}^p(\rd)\cap L^2(\rd).$ This  contrasts  with local solutions of Theorem \ref{lw}.  In \cite[Proposition 3.3]{carles2014cauchy}, Carles-Mouzaoui   proved  that  the Hartree equation is locally well-posed in $\widehat{L}^{p}(\rd)\cap L^2(\rd)$ for $p=\infty$  and Bhimani \cite[Proposition 4.5]{Bhimani2019global}  proved this result for the fractional Hartree equation. 
Hyakuna \cite[Theorem 1.8]{hyakuna2018global} proved local well-posedness for the Hartree equation in $\widehat{L}^{p}(\rd)\cap L^2(\R^d)$ for $2\leq p \leq \infty.$  In fact, he used Zhou spaces to prove local existence. The particular case of Theorem \ref{lhw} establishes these results for any  $1\leq p \leq \infty.$   
In  \cite[Theorem  1.2]{hyakuna2018multilinear}   it is proved that the Hartree equation is locally well-posed  in $\widehat{L}^{p}(\rd) \ (2\leq p<\infty) $  if $2d(\frac{1}{2}-\frac{1}{p})\leq\gamma<\min\{2,d\}$. The particular case of Theorem \ref{lhw} reveals that  this result is also true  for any $2\leq p \leq \infty$ and $0<\gamma<  2d(\frac{1}{2}-\frac{1}{p})$  with Yukawa type self interaction.

\begin{Remark}  
\begin{enumerate}
\item  In Theorem \ref{lhw} the radial symmetry assumption  for initial  data comes due to use of fractional Strichartz estimate (Proposition \ref{fst}) in the proof.

 \item  We have the local existence result  (Theorem \ref{lhw}) in $\widehat{L}^p(\R^d)$ for $ 2<p \leq \infty$, without any radial assumption  on initial data, if $0<\gamma<2d(\frac{1}{2}-\frac{1}{p}).$  
 The analogue of this  result  for $1\leq p<2$ is not known to us.
\end{enumerate}
\end{Remark}


\begin{theorem}[Global well-posedness in $L^2 \cap \widehat{L}^p$]\label{global2}
Let 
\begin{equation*}
X= \begin{cases}  \widehat{L}^p(\rd)\cap L^2(\rd) & \text{if} \  a\geq0,\ \alpha=2, 0<\gamma<\min\{2,\frac{d}{2}\},\  p\in [1, \infty]\\
\widehat{L}^p(\rd) \cap L_{rad}^{2}(\rd) & \text{if} \  a\geq0,\ d\geq 2,\ \frac{2d}{2d-1}<\alpha<2, 0<\gamma<\min\{\alpha,\frac{d}{2}\},\  p\in [1, \infty].
\end{cases}
\end{equation*}
Assume  that $(\psi_{0,1}, ..., \psi_{0,N}) \in X^N$. Then the local solution to $(\#)$ given by Theorem \ref{lhw} extends to a global one such that 
\[\left (\psi_1(t),...,\psi_N(t)\right) \in \big( C(\R,X)\cap L^{4\alpha/\gamma}_{loc}(\mathbb R, L^{4d/(2d-\gamma)}(\rd) )  \big)^N.\]
\end{theorem}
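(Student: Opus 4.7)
The local solution $(\psi_1,\ldots,\psi_N)$ provided by Theorem \ref{lhw} is defined on a maximal interval $[0,T_{\max})$ with values in $X^N$, and the local theory furnishes the standard blow-up alternative: if $T_{\max}<\infty$ then $\max_k\|\psi_k(t)\|_X\to\infty$ as $t\to T_{\max}$. The plan is to prevent this by producing an a priori bound on $\|\psi_k(t)\|_X$ on every compact interval, and then to deduce the claimed Strichartz-type space-time bound as a consequence.

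The first ingredient is the mass conservation $\|\psi_k(t)\|_{L^2}=\|\psi_{0,k}\|_{L^2}$, which follows by pairing the $k$-th equation against $\overline{\psi_k}$, integrating in $x$, and taking imaginary parts; the Hartree contribution is real by symmetry of the kernel, and the Fock terms cancel when one sums over $k$. In the fractional, radial case I would also record that radial symmetry is preserved for all time, since $(-\Delta)^{\alpha/2}$, convolution with $e^{-a|x|}/|x|^\gamma$, and pointwise multiplication are all rotationally invariant; hence uniqueness in Theorem \ref{lhw} forces the solution to remain radial, and the radial Strichartz estimates of Proposition \ref{fst} apply.

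For the global control I would run a two-step bootstrap. Set $(q,r):=(4\alpha/\gamma,4d/(2d-\gamma))$, which is Strichartz-admissible for $e^{-it(-\Delta)^{\alpha/2}}$ (the standard Keel--Tao pair when $\alpha=2$, and the radial fractional pair of Proposition \ref{fst} when $\frac{2d}{2d-1}<\alpha<2$). The Hartree nonlinearity is controlled by Hardy--Littlewood--Sobolev and H\"older as in Subsection \ref{TE}, yielding the pointwise bound $\|(|x|^{-\gamma}\ast|\psi_l|^2)\psi_k\|_{L^{r'}}\lesssim\|\psi_l\|_{L^r}^2\|\psi_k\|_{L^r}$ (with the analogous estimate for the Fock term). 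Combining Strichartz with H\"older in time, on an interval $[t_0,t_0+\tau]$ I expect
\[
\max_k\|\psi_k\|_{L^q_tL^r_x}\leq C\max_k\|\psi_k(t_0)\|_{L^2}+C\,\tau^{\,1-\gamma/\alpha}\Bigl(\max_k\|\psi_k\|_{L^q_tL^r_x}\Bigr)^3.
\]
Since the right-hand $L^2$ norm is conserved, a continuity argument closes this bootstrap for every $\tau$ smaller than a threshold depending only on $\max_k\|\psi_{0,k}\|_{L^2}$; iterating in $t_0$ yields a uniform bound on $\|\psi_k\|_{L^q_tL^r_x([0,T])}$ for every finite $T$, which already supplies the $L^{4\alpha/\gamma}_{\mathrm{loc}}(\mathbb{R},L^{4d/(2d-\gamma)})$ conclusion.

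To upgrade to $\widehat{L}^p$, I would use that $U_\alpha(t)$ acts isometrically on $\widehat{L}^p$ (Lemma \ref{4}) and apply it directly to the Duhamel formula, producing
\[
\|\psi_k(t)\|_{\widehat{L}^p}\leq\|\psi_{0,k}\|_{\widehat{L}^p}+\int_0^t\bigl\|H_\psi(s)\psi_k(s)-F_\psi(\psi_k)(s)\bigr\|_{\widehat{L}^p}\,ds.
\]
The trilinear estimates of Subsection \ref{TE}, chosen so that two factors fall on $L^2$ or $L^r$ and one linear factor falls on $\widehat{L}^p$, are designed exactly so that the integrand is controlled by $C\bigl(\sum_l\|\psi_l(s)\|_{L^r}^{a}\|\psi_l(s)\|_{L^2}^{2-a}\bigr)\|\psi_k(s)\|_{\widehat{L}^p}$ for some admissible $a$. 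Feeding in mass conservation together with the Strichartz bound from the previous step and invoking Gronwall's inequality gives an a priori $\widehat{L}^p$ bound on $[0,T]$ for every $T<\infty$. Combined with mass conservation, this precludes blow-up of $\|\psi_k\|_X$, the blow-up alternative forces $T_{\max}=\infty$, and continuity in $X$ follows from Duhamel and the dominated convergence theorem.

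The hard part will be obtaining the trilinear estimate in $\widehat{L}^p$ in precisely the form needed to close the Gronwall argument, that is, with a linear power of $\|\psi_k\|_{\widehat{L}^p}$ and the remaining two factors absorbed by conserved $L^2$ norms and Strichartz-bounded $L^r$ norms. This is precisely the content of the estimates developed in Subsection \ref{TE} and constitutes the technical heart of the argument; the rest is a routine application of the standard Strichartz/bootstrap/blow-up-alternative global-existence scheme.
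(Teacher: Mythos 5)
Your overall scheme is the same as the paper's: reduce everything to an a priori bound for the $\widehat{L}^p$-norm on finite time intervals, obtained by applying the $\widehat{L}^p$-isometry of $U_\alpha$ to Duhamel's formula, estimating the Hartree/Fock terms trilinearly with one factor in $\widehat{L}^p$ and two factors in $L^2$/Strichartz spaces, and closing with Gronwall; the paper simply cites Proposition \ref{miF} for the global $L^2$ theory and the Strichartz bounds instead of re-running your mass-conservation bootstrap, which is fine. However, there is one step that fails as you state it. You claim the integrand is controlled by $C\bigl(\sum_l\|\psi_l(s)\|_{L^r}^{a}\|\psi_l(s)\|_{L^2}^{2-a}\bigr)\max_m\|\psi_m(s)\|_{\widehat{L}^p}$ with $r=4d/(2d-\gamma)$. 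To bound $\|\mathcal{F}K\cdot\mathcal{F}(f\bar g)\|_{L^1}$ one splits $\mathcal{F}K$ into an integrable low-frequency piece (paired with $\|\mathcal{F}(f\bar g)\|_{L^\infty}\leq\|f\|_{L^2}\|g\|_{L^2}$) and a tail behaving like $|\xi|^{\gamma-d}$, which lies in $L^{w}$ only for $w>\frac{d}{d-\gamma}$; H\"older plus Hausdorff--Young then forces the quadratic factors into $L^{2w}$ with $2w>\frac{2d}{d-\gamma}>\frac{4d}{2d-\gamma}=r$. Such a norm is \emph{not} an interpolate of $L^2$ and $L^r$, so your bootstrapped $(q,r)$ Strichartz bound alone does not close the Gronwall step. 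Moreover, the estimates actually recorded in Subsection \ref{TE} (Proposition \ref{lhe}) carry restrictions such as $\gamma<d(\tfrac1p-\tfrac12)$ or $\gamma<2d(\tfrac12-\tfrac1p)$, which do not cover the full range $p\in[1,\infty]$, $\gamma<\min\{\alpha,\tfrac{d}{2}\}$ of the theorem, so ``defer to Subsection \ref{TE}'' does not suffice.

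The repair is exactly what the paper does: use the pointwise-in-time bound from the proof of Theorem \ref{lhw} (see \eqref{d1}),
\begin{equation*}
\|\h_{a,\gamma}(f,g,h)\|_{\widehat{L}^p}\lesssim\bigl(\|f\|_{L^2}\|g\|_{L^2}+\|f\|_{L^{2\rho}}\|g\|_{L^{2\rho}}\bigr)\|h\|_{\widehat{L}^p},
\qquad \tfrac{d}{d-\gamma}<\rho\leq 2,
\end{equation*}
which is available precisely because $\gamma<\tfrac{d}{2}$, and pair it with the admissible exponent $(2q_2,2\rho)$, $\tfrac{\alpha}{2q_2}=d(\tfrac12-\tfrac1{2\rho})$ (admissible since $\gamma<\alpha$). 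The local-in-time $L^{2q_2}_TL^{2\rho}$ bound is supplied either by Proposition \ref{miF} (which gives all admissible pairs) or, within your framework, by one further application of the Strichartz inequality to Duhamel using your already-established $(q,r)$ bound. With this substitution — and noting that for the Fock term the $\widehat{L}^p$-factor is $\psi_l$ rather than $\psi_k$, which is harmless since Gronwall is run on $\max_k\|\psi_k(t)\|_{\widehat{L}^p}$ — your argument becomes the paper's Case II proof, and the radiality-preservation and blow-up-alternative remarks you make are consistent with how the paper uses Theorem \ref{lhw} and Proposition \ref{miF}.
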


Carles-Mouzaoui \cite[Theorem 1.1]{carles2014cauchy}  proved  that the Hartree equation is globally well-posed in $L^2(\rd)\cap\widehat{L}^\infty(\rd)$ and  Hyakuna \cite[Theorem 1.9]{hyakuna2018global}  generalized  this result to $L^2(\rd)\cap\widehat{L}^{p}(\rd)$ for $2\leq p < \infty.$ 
 On the other hand, Bhimani \cite[Theorem 1.2]{Bhimani2019global}  generalized the Carles-Mouzaoui result to the  fractional  Hartree equation in $L^2(\rd)\cap\widehat{L}^{\infty}(\rd).$  The particular case of  Theorem \ref{global2} establishes these results for any  $1\leq p \leq \infty.$

\begin{Remark}
\begin{enumerate}
 \item To extend the local existence (proved in  $\widehat{L}^p(\R^d)$  for $\alpha \neq 2$) globally, first we prove that $(\#)$ is globally well-posed  (see Proposition \ref{miF})  in  $L_{rad}^2(\mathbb R^d)$  via  Strichartz estimates  for the fractional Sch\"odinger equation (see Proposition \ref{fst})  $-$where we need initial data to be a radial, $\alpha \in (2d/(2d-1), 2)$  and   dimension $d\geq 2$ (see \cite[p.26-27]{guo2014improved}). Invoking Proposition \ref{miF}, we get the global existence in  $\widehat{L}^p(\rd) \cap L^2(\rd)$. Thus we notice  in the proof that, to take  the advantage of Proposition \ref{miF},   the hypothesis,  initial data to be radial  of  Theorem \ref{global2} is necessary.

\item    The analogue for Theorem \ref{global2}  without radial assumption on initial data remains an interesting open question.

\item Due to lack of appropriate Strichartz estimates for  the fractional Schr\"odinger equation with $\alpha>2,$ we do not know  whether  $(\#)$ with $\alpha>2$ is globally well-posed in $L^2(\mathbb R^d)$ and also whether the solution is in some mixed $L_{loc}^{p(\gamma)}(\mathbb R, L^{q(\gamma)}(\mathbb R^d))$ spaces (the analogue of Proposition \ref{miF}).  In view of this, the analogue of Theorem \ref{global2}  for $\alpha>2$ remains another  open  question.
\end{enumerate}
\end{Remark}




\begin{theorem}[Improved well-posedness  in 1D]\label{iglobal} Let $\alpha=2, 0<\gamma<1,$ and 
\begin{equation*}
X=\begin{cases}  L^p(\R)\cap L^2(\R)  & \text{if} \ p\in(4/3,2]\\
\widehat{L}^p(\R)\cap L^2(\R) & \text{if}  \ p\in[2, 4).
\end{cases}
\end{equation*}
Assume  that  $\psi_0= (\psi_{0,1}, ..., \psi_{0,N})\in X^N.$
Then  there exists a unique global solution  of $(\#)$ such that 
$\left (U(-t)\psi_1(t),..., U(-t)\psi_N(t)\right) \in \left( C(\R,X) \right)^N$ when $p\in (4/3, 2]$ and $ \left (\psi_1(t),..., \psi_N(t)\right) $ $ \in \left( C(\R, X) \right)^N$ when $p\in [2,4).$
\end{theorem}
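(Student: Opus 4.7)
The plan is to prove local well-posedness via a contraction mapping argument on the Duhamel formulation of $(\#)$, then extend globally using $L^2$ mass conservation plus a blow-up alternative; the two cases $p\in(4/3,2]$ and $p\in[2,4)$ are handled by parallel schemes in different ambient spaces. In the first case, since $U(t)$ does not preserve $L^p(\R)$ when $p\neq 2$, I would pass to the twisted variable $v_k(t)=U(-t)\psi_k(t)$ and invoke the factorization formula for $U(t)$ (Lemma \ref{2}, referenced in Remark \ref{r2}) to rewrite the Hartree and Fock terms as a trilinear integral in $(v_1,\ldots,v_N)$. The contraction would be set up on a ball in the mixed space
\[Y_T=\left(C([0,T];L^p(\R)\cap L^2(\R))\right)\cap L^{q_1}([0,T];L^{r_1}(\R)),\]
for a 1D Strichartz-admissible pair $(q_1,r_1)$ satisfying $\tfrac{2}{q_1}+\tfrac{1}{r_1}=\tfrac{1}{2}$; in one dimension the admissible region is wider (including the endpoint $(4,\infty)$) and provides the extra freedom that drives the improvement. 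In the second case, since $U(t)$ is bounded on $\widehat{L}^p(\R)$ by Lemma \ref{4}, no twisting is required and one works directly in $C([0,T];\widehat{L}^p(\R)\cap L^2(\R))\cap L^{q_2}([0,T];L^{r_2}(\R))$.

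The technical heart is a sharpened one-dimensional trilinear estimate for
\[\mathcal{T}(f,g,h)=\left(\frac{e^{-a|x|}}{|x|^{\gamma}}\ast\overline{f}g\right)h.\]
The crucial observation is that $|x|^{-\gamma}$ is locally integrable on $\R$ precisely when $\gamma<1$, which is exactly the range in the theorem. Combining the Hardy--Littlewood--Sobolev estimate $\||x|^{-\gamma}\ast u\|_{L^s}\lesssim\|u\|_{L^r}$ (valid for $1<r<s<\infty$ with $\tfrac{1}{r}-\tfrac{1}{s}=1-\gamma$) with H\"older's inequality and the wider 1D admissible Strichartz range lets one close the contraction in $Y_T$ without invoking the constraint $\gamma<d/2=\tfrac{1}{2}$ that binds Theorems \ref{global1} and \ref{global2}. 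For the $\widehat{L}^p$ side, I would use the dual embedding $\widehat{L}^p\hookrightarrow L^p$ (for $p\geq 2$) plus analogous estimates on the Fourier side, exploiting that the convolution kernel $e^{-a|x|}/|x|^\gamma$ has Fourier symbol that is a bounded multiplier on the relevant $L^{p'}$ scale.

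For global existence I would combine the mass conservation $\|\psi_k(t)\|_{L^2}=\|\psi_{0,k}\|_{L^2}$ (already justified earlier in the paper through approximation) with a blow-up alternative: the local existence time $T$ produced by the trilinear bound depends only on the conserved $L^2$ norm together with the $L^p$ (resp.\ $\widehat{L}^p$) component, and the latter cannot blow up in finite time because the Duhamel inequality for its norm closes via a Gr\"onwall-type argument against the $L^2$-controlled trilinear input. Iterating the local result on successive intervals $[T_n,T_n+\tau]$ with uniform step $\tau=\tau(\|\psi_0\|_{L^2})$ yields the global solution.

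The main obstacle I expect is pinning down the trilinear estimate so that the exponents fit simultaneously (i) a Hardy--Littlewood--Sobolev budget of integrability loss $1-\gamma$, (ii) a dual Strichartz pair on the output, and (iii) the constraint that the spatial norm match the ambient $L^p\cap L^2$ or $\widehat{L}^p\cap L^2$ target. Keeping all three constraints compatible across the \emph{entire} open range $\gamma\in(0,1)$, and doing so uniformly for both the Lebesgue and the Fourier--Lebesgue setting so that a single clean statement covers $p\in(4/3,2]\cup[2,4)$, is the delicate bookkeeping step; once it is in hand, the contraction, the mass-conservation loop, and the blow-up alternative are standard.
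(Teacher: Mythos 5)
There is a genuine gap: the mechanism you propose for beating $\gamma<d/2=\tfrac12$ is not the one that actually works, and as described it does not close. The binding constraint in Theorems \ref{lw}--\ref{global2} in 1D is neither the size of the classical admissible region nor the local integrability of $|x|^{-\gamma}$; it comes from estimating the twisted nonlinearity, i.e.\ the Fourier-side trilinear term $s^{-\gamma}\widehat{\h}_{a,\gamma,s}$, where the Hausdorff--Young step $\|\Omega(\psi_k,\psi_l)(s)\|_{L^{\rho'}}\lesssim\|\F\Omega(\psi_k,\psi_l)(s)\|_{L^{\rho}}$ forces $\rho\le 2$ while the tail $k_2$ of the kernel forces $\rho>\frac{d}{d-\gamma}$, i.e.\ $\gamma<\frac d2$ (see \eqref{gr1} and Proposition \ref{le}). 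Your scheme --- twisting, classical $L^2$-based Strichartz pairs (including the 1D endpoint $(4,\infty)$), Hardy--Littlewood--Sobolev and H\"older --- reproduces exactly this bookkeeping: the wider 1D admissible range is already available in the proofs of Theorems \ref{lw} and \ref{global1} and buys nothing beyond $\gamma<\tfrac12$ there. It also cannot explain the specific windows $p\in(4/3,2]$ and $p\in[2,4)$ in the statement, which classical admissibility does not single out. Finally, for the $\widehat{L}^p$ side your appeal to the Fourier symbol of $e^{-a|x|}|x|^{-\gamma}$ being a bounded multiplier fails for $a=0$ (the symbol is $C|\xi|^{\gamma-1}$, unbounded at the origin), and the available $\widehat{L}^p$ trilinear bound (Proposition \ref{lhe}) again carries the $\gamma<\frac d2$ restriction.

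The missing ingredient is the generalized, non-$L^2$ Strichartz estimate of Lemma \ref{31}: $\|U(t)\phi\|_{L^{3p}(\R\times\R)}\lesssim\|\phi\|_{\widehat{L}^p(\R)}$ for $\tfrac43<p\le2$, together with its dual $\bigl\|\int_I U(-s)F(s)\,ds\bigr\|_{\widehat{L}^p}\lesssim\|F\|_{L^{(3p')'}}$ for $2\le p<4$; this is precisely what produces the range $p\in(4/3,2]\cup[2,4)$. In the $L^p$ case the paper first converts $\|I_{k,l,m}(t)\|_{L^p}$ into an $\widehat{L}^{p'}$-norm of a Duhamel-type integral, using $\|\varphi\|_{L^p}=\|\F\overline{\varphi}\|_{\widehat{L}^{p'}}$, the identity $\F M_s\F^{-1}=U(-1/16\pi^2 s)$ and the change of variable $s\mapsto 1/s$, and only then applies the dual of Lemma \ref{31}; the resulting bounds \eqref{e1} and \eqref{e2} use an admissible pair $(\w{Q},2\w{R})$ and Lemma \ref{13}, but avoid the $k_1/k_2$ decomposition and the $\rho\le2$ constraint altogether, which is how the whole range $0<\gamma<1$ is reached. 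Your global step (mass conservation, Strichartz control of $\|\psi\|_{L_T^{\w{Q}}(L^{2\w{R}})}$ via Proposition \ref{miF}, Gr\"onwall, blow-up alternative) does match the paper's, but it only closes for $\gamma<1$ once this improved nonlinear estimate is in hand --- and that is exactly the step your proposal leaves as ``delicate bookkeeping.''
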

The proof of Theorem \ref{iglobal} relies on generalized Strichartz estimates (see Lemma \ref{31})  in  1D and following the strategy in \cite{hyakuna2018global}. Specifically,  we shall see that this enables us to estimate   the integral nonlinear part of $(\#)$ (see  \eqref{nl} and \eqref{e1}),  and as a consequence we can improve the range of $\gamma.$

   We next show that  $(\#)$  with Coulomb type potential shows a mild form of  ill-posedness
   in the mere $\widehat{L}^p(\R^d)^N \ (2<p\leq\infty)$ spaces for $0<\gamma<2d(\frac{1}{2}-\frac{1}{p})$. Specifically, we have the following result:

\textcolor{black}{
\begin{theorem}[Failure of $C^3-$smoothness in  $\widehat{L}^p$
]\label{ill1}
Let $a=0$, $0<\gamma<2d(\frac{1}{2}-\frac{1}{p}),$ $2<p\leq\infty$ and fix $0<t\leq T.$
Denote the solution map of (\#) by $\mathcal{U}(t):\psi_0\mapsto\psi(t).$ Assume that $\mathcal{U}(t)$ is well-defined as a  map acting in $(\widehat{L}^p(\R^d))^N,$ then $\mathcal{U}(t)$ is not $C^3-$smooth at $\psi_0=0$ in $(\widehat{L}^p(\R^d))^N.$
\end{theorem}
The study of mild ill-posedness (failure of $C^3-$smoothness) for the Hartree-Fock equation is new as far as we are aware.   This type of mild ill-posedness was initiated by Bourgain in \cite{bourgain1997periodic} for KdV and mKdV,  see also \cite{tzvetkov1999remark}. This essentially involves showing ``unboundedness" of  the third  Picard iterate  associated with (\#), see \eqref{bi} and Subsection \ref{fr}.  Since then,  many authors have used this approach,  see,  for example,  \cite[Proposition 4.1]{choffrut2018ill} for cubic nonlinear half-wave equation. We exploit this approach to prove Theorem \ref{ill1}.  See point \eqref{rah} below for further comments on the proof.
}

\textcolor{black}{
Theorem \ref{ill1} states that (\#) experience a mild form of ill-posedness in the sense that the solution map fails to be $C^3-$smooth.  However,  we do not know whether the solution map fails to be continuous at origin in $(\widehat{L}^p(\R^d))^N$. This remains an interesting open question.  On the other hand,  we  could  show  that (\#) is not quantitatively well-posed 
  in $(\widehat{L}^p(\R^d))^N$.    See Subsection \ref{fr} for detailed discussion.}

We summarize our findings in Table \ref{table}. We write $x\wedge y=\min\{ x, y \}$. \\
\begin{table}
\caption{\textbf{Results Summary}}
\label{table}
\begin{tabular}{|P{1.1cm}|P{1.4cm}|P{1.7cm}|P{1.6cm}|P{3.1cm}|P{2.8cm}|}
\hline
$a$	 & $\alpha$	&			\textbf{Space}		&		$p$	 &		$\gamma$ 		&\textbf{Result}\\
\hline
\multirow{11}{*}{$[0,\infty)$}		&	\multirow{8}{*}{$\{2\}$}	&			\multirow{4}{*}{$L^p\cap L^2$}					& \addstackgap[4pt]{$[1,\infty]$}	&$ (0, 1\wedge\frac{d}{2})$&		$global$\\
\cline{4-6}
		   &		    		 &				  	    &		\addstackgap[4pt]{$[1,\frac{4}{3}]$	}	&$ (0, 1\wedge 2d(\frac{1}{p}-\frac{1}{2}))$&	{$local$}\\
		   \cline{4-6}
&		&		&		   \addstackgap{$(\frac{4}{3},2]$}  & $(0,1)$ & $global$ $if$ $d=1$\\
\cline{3-6}
	&			&	\multirow{4}{*}{$\widehat{L}^p\cap L^2$}	&	\addstackgap[4pt]{$[1,\infty]$}	&	$(0,2\wedge\frac{d}{2})$	&	{$global$}\\
\cline{4-6}
		   &		    		 &				  	    &		\addstackgap[4pt]{$(2,\infty]$}		&	$(0,2d(\frac{1}{2}-\frac{1}{p}))$& $local$\\
\cline{4-6}
		   &		    		 &				  	    &		{$[2,4)$}	& 	\addstackgap[4pt]{$(0,1)$}	&$global$ $if$ $d=1$ \\
		   \cline{2-6}
		   &\addstackgap[4pt]{$(0,\infty)$}	& 	{$\widehat{L}^p\cap L^2$} &  $(2,\infty]$&$(0,2d(\frac{1}{2}-\frac{1}{p}))$& $local$\\
		   \cline{2-6}
		   &		\addstackgap{$(\frac{2d}{2d-1},2)$}    		 &		{$\widehat{L}^p\cap L_{rad}^2$}	&	{$[1,\infty]$}	&		{$(0,\alpha\wedge\frac{d}{2})$}	&		{$global$ $if$ $d\geq2$}\\
\hline
\addstackgap[4pt]{$(0,\infty)$}   &	$(0,\infty)$	&	{$\widehat{L}^p$} & $(2,\infty]$		& $(0,2d(\frac{1}{2}-\frac{1}{p}))$ & {$local$}\\
\hline
\addstackgap[4pt]{$\{0\}$}	&		$(0,\infty)$	&	{$\widehat{L}^p$} & $(2,\infty]$		& $(0,2d(\frac{1}{2}-\frac{1}{p}))$ 	& $mild$ $ill$-$posed$\\
\hline
\end{tabular}
\end{table}

\subsection{Several comments on  the paper}\label{SCP} 
\begin{enumerate}
\item \label{JPE} We define \textbf{trilinear operator}  associated to Hartree-type nonlinearity  by 
\[ \widehat{\h}_{a,\gamma,t}(f,g,h)=\left[\left(S_{a,t}*|\cdot|^{\gamma-d}\right)(f*\bar{g})\right]*h\]
where 
\begin{equation}\label{si}
S_{a,t}=\begin{cases}
 \ \ \ \ \ \ \ \ \ \delta_0 & \text{if} \ a=0\\
\ \frac{a|t|}{(4a^2t^2+|\cdot|^2)^{(d+1)/2}} & \text{if} \ a>0
\end{cases}
\end{equation}
with $t\in\R$ and $\delta_0$ being  the Dirac distribution with mass at origin in $\rd$. We shall briefly discuss our main  ideas and techniques to establish local well-posedness of $(\#)$ in $L^p(\rd)\cap L^2(\rd)$ for $2<p \leq \infty:$
\begin{itemize}
\item  The proof of local well-posedness  crucially depends on a suitable availability of   $\widehat{H}_{a, \gamma, t}-$ estimates. (See also  Remark \ref{mip} below.) Specifically,  we have following triliner estimates (to be established in Subsection \ref{TE} below):
\begin{equation}\label{AE}
\|\widehat{\h}_{a,\gamma,t}(f,g,h)\|_{L^p\cap L^2} \lesssim \|f\|_{L^p \cap L^2} \|g\|_{L^p \cap L^2} \|h\|_{L^p \cap L^2}, \quad \forall p \in [1, 2).
\end{equation}
\item Using \eqref{AE} one can establish local well-posedness in $L^p(\rd)\cap L^2(\rd)$ for $1\leq p <2.$ Up to now we cannot know the validity of  \eqref{AE} for $p\in (2, \infty].$ Hence, we cannot follow previously employed ideas to deal with the case $2<p\leq \infty.$ 
\item  To overcome the issue  mentioned above,  we {\bf introduced  a ball} $\mathcal{V}_b^T$, (see Case I in the first Proof of Theorem \ref{lw} in Subsection \ref{plw}) {\bf  involving the ``twisting"  free Schr\"odinger propagator}    (unlike the usual  choice).  We then effectively  use Strichartz estimates  to  establish the contraction of  twisted Duhamel operator $\Phi_{\psi_0}$  (see \eqref{6}) on $\mathcal{V}_b^T.$ This leads to local existence.   The advantage of this approach is that, though we do not know \eqref{AE} for $2<p\leq\infty$, we could establish local well-posedness for $2<p\leq \infty.$ In fact, this approach works for all $p\in [1, \infty].$
\end{itemize}

\item \label{dsv} It might be tempting to think that well-posedness for $(\#)$ with  Coulomb type self-interaction  immediately implies well-posdness for $(\#)$ with  Yukawa type self interaction  as  $e^{-a|\cdot|}|\cdot|^{-\gamma}$ (Yukawa type: $a>0$) has faster decay at infinity compared to $|\cdot|^{-\gamma}$ (Coulomb type: $a=0$).  However, it is not the case. The local existence in $L^p(\rd)\cap L^2(\rd)$ heavily rely on  factorization Lemma \ref{2}. Besides we shall  notice that for $a>0$ the trilinear operator $\widehat{H}_{a,\gamma,t}$  depends on a time parameter $t$. On the other hand, $\widehat{H}_{a,\gamma,t}$  is independent of time parameter   in the case of $a=0$. Specifically, we cannot straight-way claim $$\n\widehat{\mathcal{H}}_{a,\gamma,t}(f,g,h)\n_{L^p\cap L^2}\leq c \n\widehat{\mathcal{H}}_{0,\gamma,t}(f,g,h)\n_{L^p\cap L^2}$$ and apply estimates for $a=0$.  This new time parameter makes  analysis more delicate while dealing with Yukawa type self interaction. (Also, cf. Theorems \ref{lhw} and \ref{ill1} for $a=0$ and $a>0.$)

\textcolor{black}{
\item \label{rah} In order to prove  unboundedness of third Picard iterate in the proof of Theorem \ref{ill1}, we have carefully adopted the technique form \cite{carles2014cauchy} (where the unboundedness is proved for classical Hartree equation  in $\widehat{L}^{\infty}$).  In  \eqref{hf} due to the presence of  Fock term (exchange term) $ F_\psi(\psi_k)$,  the   computation of the third iterate is more subtle compared to the classical Haretree case and  thus  required careful analysis to prove Theorem \ref{ill1}.    We also note that the proof of Theorem \ref{ill1} relies on the fact that  the Fourier transform of the Coulomb type potential is homogeneous. On the other hand, the Fourier transform of the Yukawa type potential is not homogeneous, see Lemma \ref{1}. Therefore the proof does not work for Yukawa type potential.
}

\item   For $s>0$ we can choose $2<p\leq\infty$ so that $s>d(\frac{1}{2}-\frac{1}{p}).$  In this case, by H\"older inequality,  we have $H^s(\rd)\subset \widehat{L}^p(\rd)$ and so $H^s(\rd)\subset \widehat{L}^p(\rd) \cap L^2(\rd) \subset L^2(\rd).$  Thus Theorems \ref{lhw} and \ref{global2}  reveals that we can solve $(\#)$ with Cauchy data  in the larger space $\widehat{L}^p(\rd)\cap L^2(\rd)$  compared to $H^{s}(\rd).$  In particular, this complements  Lenzmann's  work \cite[Theorems 1, 2 and 3]{lenzmann2007well} on  Hartree equation with Yukawa type self interaction, especially for $0<s<1/2$ in dimension 3 and for all $s>0$ in other dimensions. 

\item \label{dv} 
We note that
\begin{equation*}
\begin{rcases}
\hspace{1.25cm} H^{1/2}(\mathbb R^3)\\
\widehat{L}^{6}(\mathbb R^3)\cap L^2(\mathbb R^3)
\end{rcases}\subset  L^6(\mathbb R^3) \cap L^2(\mathbb R^3).
\end{equation*}
In  \cite[Theorem 2.3]{frohlich2007dynamical} Fr\"ohlich-Lenzmann  proved that radially symmetric data with negative energy lead to blow-up solutions for $(\#)$ with $\gamma=\alpha=1$  and $a=0$ in finite time in  $H^{1/2}(\mathbb R^3)-$norm. On the other hand, Theorem \ref{global2} ensures that 
$(\#)$ with $3/5< \alpha < 2, 0< \gamma< \min\{ \alpha, 3/2\}$ is globally well-posed in $\widehat{L}^6(\mathbb R^3)\cap L^2(\mathbb R^3).$
\end{enumerate}
 
This paper is organized as follows. In Section \ref{pki}, we introduce preliminaries and establish key ingredients which will be used in the sequel. Specifically, in Subsections \ref{fls} and \ref{TE} we prove  factorization Lemma \ref{2} and  various trilinear estimates respectively. We shall see this will play a vital role in proving main results of the paper.  In  Subsections \ref{plw}, \ref{plhw} we prove  Theorems \ref{lw} and \ref{lhw} respectively.
In Subsections \ref{pglobal1}, \ref{pglobal2} we prove  Theorems \ref{global1} and \ref{global2} respectively. In Subsection \ref{piglobal}, we prove Theorem \ref{iglobal}. In Section \ref{pill1} we prove Theorem \ref{ill1}.
\section{Preliminaries and key ingredient}\label{pki}
\noindent
\textbf{Notations and known results}.  For real numbers $A,B$ the notation $A \lesssim B $ means $A \leq cB$ for some universal constant $c > 0 $, whereas $ A \asymp B $ means $c^{-1}A\leq B\leq cA $ for some $c\geq 1$. Also $A\gtrsim B$ means $B\lesssim A$. For complex $A,B$ by $A\propto B$ means $A=cB$ for some universal constant $c \neq 0 $.
The characteristic function of a set $E\subset \mathbb R^d$ is $\chi_{E}(x)=1$ if $x\in E$ and $\chi_E(x)=0$ if $x\notin E.$
Let $I\subset \mathbb R$ be an interval and $X$ be a Banach space of functions. Then the norm of the space-time Lebesgue space $L^q(I, X)$ is defined by
$\|u\|_{L^q(I, X)}= \left(\int_{I} \|u(t)\|_{X}^q dt \right)^{1/q}$
and when $I=[0,T]$, $T>0$ we denote $L^q(I, X)$ by $L_T^q(X)$.  For  $p\in [1, \infty],$ its H\"older conjugate, denoted by $p'$,  is given by $\frac{1}{p}+ \frac{1}{p'}=1.$
 The norm  on $N$-fold product $X^N$ of Banach space $\left(X, \|\cdot\|_{X}\right)$ is given by 
$$\n\psi\n_{X^N}=\max_{1\leq j\leq N}\n\psi_j\n_X ,\quad\psi=(\psi_1,\cdots,\psi_N)\in X^N.$$
The Schwartz space is denoted by  $\mathcal{S}(\mathbb R^{d})$ (with its usual topology), and the space of tempered distributions is  denoted by $\mathcal{S'}(\mathbb R^{d}).$ For two Banach spaces of functions $A,B$ in $\mathcal{S}'(\rd)$ we note that $A\cap B$ is also a Banach space with the norm $\n\cdot\n_{A\cap B}=\max\{\n\cdot\n_A,\n\cdot\n_B\}$.
For $x=(x_1,\cdots, x_d), y=(y_1,\cdots, y_d) \in \mathbb R^d, $ we put $x\cdot y = \sum_{i=1}^{d} x_i y_i.$
Let $\mathcal{F}:\mathcal{S}(\mathbb R^{d})\to \mathcal{S}(\mathbb R^{d})$ be the Fourier transform  defined by  
$
\mathcal{F}f(\xi)=\widehat{f}(\xi)=\int_{\mathbb R^{d}} e^{- 2\pi i x\cdot \xi}f(x)dx ,  \xi\in \mathbb R^d.$
Then $\mathcal{F}$ is a bijection  and the inverse Fourier transform  is given by $\mathcal{F}^{-1}f(x)=f^{\vee}(x)=\F f(-x)$ for $x\in\rd$,
and this Fourier transform can be uniquely extended to $\mathcal{F}:\mathcal{S}'(\mathbb R^d) \to \mathcal{S}'(\mathbb R^d)$ such that for each $u\in\mathcal{S}'(\rd)$ one has $\langle \F u,\varphi\rangle=\langle u,\F\varphi\rangle$ for all $\varphi\in\mathcal{S}(\rd)$. 
\begin{Lemma}
\label{1} \begin{enumerate}
\item  
For $f(x)= e^{-2\pi |x|},$  we have 
$ \widehat{f}(\xi) = \frac{c_d}{(1+ |\xi|^2)^{(d+1)/2}}$
with $c_d=\frac{\Gamma((d+1)/2)}{\pi ^{(d+1)/2}},$ $\Gamma$ is the Gamma function and $\widehat{f}\in L^p(\mathbb R^d)$ for all $1\leq p \leq \infty.$

\item  Let $0<\gamma <d.$ Then for $f(x)=|x|^{-\gamma},$ we have 
$
\widehat{f}(\xi)= \frac{ C_{d, \gamma}}{|\xi|^{d-\gamma}}.
$
\end{enumerate}
\end{Lemma}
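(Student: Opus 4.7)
\medskip

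\noindent\textbf{Proof proposal.} Both identities are classical, and the plan is to derive them from the Gaussian Fourier transform
$$\F\bigl(e^{-\pi t |x|^2}\bigr)(\xi)=t^{-d/2}e^{-\pi|\xi|^2/t}, \qquad t>0,$$
which is an elementary one-dimensional calculation (completing the square and contour shift) applied coordinate-wise. The technical substitute I need in each part is an integral representation that writes the given function as a weighted superposition of Gaussians; then Fubini transfers the Fourier transform inside.

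For part (1), the plan is to use the subordination identity
$$e^{-a}=\frac{1}{\sqrt{\pi}}\int_0^\infty \frac{e^{-s}}{\sqrt{s}}\,e^{-a^2/(4s)}\,ds, \qquad a\geq 0,$$
applied with $a=2\pi|x|$. Substituting into $\widehat{f}(\xi)=\int e^{-2\pi ix\cdot\xi}e^{-2\pi|x|}\,dx$, swapping the order of integration (justified since the integrand is positive), and computing the inner Gaussian Fourier transform yields an expression of the form $\int_0^\infty s^{-1/2}e^{-s}e^{-\pi^2|\xi|^2/s}\,ds$ up to explicit constants. A change of variable $s\mapsto s(1+|\xi|^2)$ collapses the integral into $\Gamma((d+1)/2)$ and the factor $(1+|\xi|^2)^{-(d+1)/2}$, matching the constant $c_d=\Gamma((d+1)/2)/\pi^{(d+1)/2}$. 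The claim $\widehat{f}\in L^p(\rd)$ for every $1\leq p\leq \infty$ is then immediate from the explicit formula, since $(1+|\xi|^2)^{-(d+1)/2}$ decays faster than any power $|\xi|^{-d/p-\epsilon}$ at infinity and is bounded at the origin.

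For part (2), since $|x|^{-\gamma}\notin L^1(\rd)$ the Fourier transform must be read distributionally. The cleanest route is the Gamma representation
$$\pi^{\gamma/2}|x|^{-\gamma}=\frac{1}{\Gamma(\gamma/2)}\int_0^\infty t^{\gamma/2-1}e^{-\pi t|x|^2}\,dt,$$
valid as a locally integrable function for $0<\gamma<d$. Pairing both sides against a Schwartz test function $\varphi$, interchanging the order of integration, and applying $\F(e^{-\pi t|x|^2})(\xi)=t^{-d/2}e^{-\pi|\xi|^2/t}$ moves the computation to an integral of the form $\int_0^\infty t^{\gamma/2-1-d/2}e^{-\pi|\xi|^2/t}\,dt$; the substitution $s=1/t$ turns this into a Gamma integral yielding $|\xi|^{-(d-\gamma)}$ with constant $C_{d,\gamma}=\pi^{\gamma-d/2}\Gamma((d-\gamma)/2)/\Gamma(\gamma/2)$.

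The main obstacle in each part is bookkeeping the constant precisely, and, in (2), justifying the Fubini swap given that $|x|^{-\gamma}$ is only locally integrable; the hypothesis $0<\gamma<d$ handles this by ensuring integrability at the origin, while Schwartz decay of $\varphi$ handles integrability at infinity. Alternatively, part (2) can be obtained by first observing that $\F(|x|^{-\gamma})$ is a rotation-invariant tempered distribution that is homogeneous of degree $-(d-\gamma)$, which forces it to be a constant multiple of $|\xi|^{-(d-\gamma)}$; the constant is then identified by testing against a single Gaussian, which is exactly the calculation above in disguise.
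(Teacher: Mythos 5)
Your argument is correct in substance; note that the paper itself offers no proof of this lemma, treating both formulas as classical facts (part (1) is the standard Fourier transform of the Poisson-type kernel, part (2) the Riesz potential identity), so your subordination-plus-Gaussian-superposition derivation, with the homogeneity argument as an alternative for (2), supplies exactly the omitted classical computation. Two bookkeeping slips, neither of which affects the method: in part (1), after moving the Fourier transform inside, the inner Gaussian transform produces a factor $(s/\pi)^{d/2}e^{-s|\xi|^2}$, so the intermediate integral is $\int_0^\infty s^{(d+1)/2-1}e^{-s(1+|\xi|^2)}\,ds$ up to constants (your written form $e^{-\pi^2|\xi|^2/s}$ is the pre-transform Gaussian; only with the corrected form does the substitution $s\mapsto s(1+|\xi|^2)$ yield $\Gamma((d+1)/2)\,(1+|\xi|^2)^{-(d+1)/2}$, which matches $c_d$). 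In part (2) the Gamma representation should read $\pi^{-\gamma/2}|x|^{-\gamma}=\frac{1}{\Gamma(\gamma/2)}\int_0^\infty t^{\gamma/2-1}e^{-\pi t|x|^2}\,dt$ (your power of $\pi$ has the wrong sign), though your final constant $C_{d,\gamma}=\pi^{\gamma-d/2}\Gamma((d-\gamma)/2)/\Gamma(\gamma/2)$ is the correct one, and in any case the lemma does not specify it.
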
 

For $f\in \mathcal{S}(\mathbb R^{d}),$ we define the \textbf{fractional Schr\"odinger propagator $e^{-it(-\Delta)^{\alpha/2}}$ }for $t \in \mathbb R, \alpha >0$ as follows:
\begin{eqnarray}
\label{sg}\label{-1}
[U_{\alpha}(t)f](x)=\left[e^{-it (-\Delta)^{\alpha/2}}f\right](x):= \int_{\mathbb R^d}  e^{-(2\pi)^\alpha i |\xi|^{\alpha}t}\, \widehat{f}(\xi) \, e^{2\pi i \xi \cdot x} \, d\xi.
\end{eqnarray}
For  $\alpha=2,$ we  simply  write $U_2=U$. In this case we have (see \cite[Lemma 2.2.4]{cazenave2003semilinear}) \begin{align}\label{0}
[ U(t)f](x)=\left[e^{it \Delta}f\right](x)=\frac{1}{(4\pi it)^{d/2}}\int_{\rd}e^{i|x-y|^2/4t}f(y)dy.
 \end{align}

\begin{Definition} A pair $(q,r)$ is $\alpha$-fractional admissible if  $q\geq 2, r\geq 2$ and
$\frac{\alpha}{q} =  d \left( \frac{1}{2} - \frac{1}{r} \right),(q,r,d)\neq(\infty,2,2).$
\end{Definition}
\begin{proposition}[Strichartz estimates] \label{fst}
Denote
$$DF(t,x) =  U_{\alpha}\phi(x) +  \int_0^t U_\alpha(t-s)F(s,x) ds.$$
\begin{enumerate}
\item  \label{fst1} Let $\phi \in L^2(\rd),$ $d\in \mathbb N$ and $\alpha=2.$   Then for any time interval $I\ni0$ and 2-admissible pairs $(q_j,r_j)$, $j=1,2,$ 
there exists  a constant $C=C(r_1,r_2)$ such that 
$$ \|D(F)\|_{L^{q_1}(I,L^{r_1})}  \leq  C \|\phi \|_{L^2}+   C  \|F\|_{L^{q'_2}(I,L^{r'_2})}, \quad\forall F \in L^{q_2'} (I, L^{r_2'}(\R^d))$$  where $q_j'$ and $ r_j'$ are H\"older conjugates of $q_j$ and $r_j$
respectively \cite{keel1998endpoint}.

\item \label{fst2} Let  $\phi \in L_{rad}^2(\rd)$, $d\ge 2,$ and  $\frac{2d}{2d-1} < \alpha < 2.$   Then for any time interval $I\ni0$ and $\alpha$-fractional admissible pairs $(q_j, r_j)$, $j=1,2,$ 
there exists  a constant $C=C(r_1, r_2)$ such that
$$ \|D(F)\|_{L^{q_1}(I,L^{r_1})}  \leq  C \|\phi \|_{L^2}+   C  \|F\|_{L^{q'_2}(I,L^{r'_2})}, \quad \forall F \in L^{q'_2}(I,L_{rad}^{r'_2}(\R^d))$$  where $q_j'$ and $ r_j'$ are H\"older conjugates of $q_j$ and $r_j$ respectively \cite[Corollary 3.10]{guo2014improved}.
\end{enumerate}
\end{proposition}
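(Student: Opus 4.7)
The proposition consists of two well-established Strichartz estimates that are cited verbatim from Keel-Tao and Guo-Wang respectively, so in practice the proof amounts to invoking those references. Nevertheless, here is how I would organize the derivation.

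For part \eqref{fst1}, the starting point is the pointwise dispersive decay of the free Schrödinger propagator. Reading off the explicit kernel in \eqref{0} gives
\[\|U(t)f\|_{L^\infty} \lesssim |t|^{-d/2}\|f\|_{L^1},\]
and combining with the $L^2$-isometry of $U(t)$ via Riesz-Thorin interpolation yields $\|U(t)f\|_{L^r} \lesssim |t|^{-d(1/2-1/r)}\|f\|_{L^{r'}}$ for all $r\in[2,\infty]$. The homogeneous Strichartz estimate $\|U(\cdot)\phi\|_{L^q_I L^r_x} \lesssim \|\phi\|_{L^2}$ for non-endpoint admissible $(q,r)$ then follows via the standard $TT^*$-duality argument combined with the Hardy-Littlewood-Sobolev inequality, and the retarded piece in $DF$ is handled by Christ-Kiselev. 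The true endpoint case $(q,r)=(2,2d/(d-2))$ when $d\geq 3$ is the delicate part and is resolved by the Keel-Tao bilinear interpolation based on dyadic decomposition in time. Passing from the global-in-time bound to the estimate on a finite interval $I\ni 0$ with the stated $C=C(|I|,r_1)$ is automatic by restriction.

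For part \eqref{fst2}, the same $TT^*$ scaffolding applies; the only delicate step is proving a dispersive estimate for the fractional propagator $U_\alpha(t)$ when $\alpha\neq 2$. In full generality $U_\alpha(t)$ suffers a derivative loss and clean Strichartz bounds fail. Guo-Wang observed that under radial symmetry one can pass to a Bessel/Hankel representation and apply stationary phase to the radial phase $2\pi t|\xi|^\alpha - 2\pi r|\xi|$; the resulting refined kernel bounds furnish the same sort of dispersive decay as in the Schrödinger case, but restricted to radial profiles, provided $d\geq 2$ and $\alpha\in(\tfrac{2d}{2d-1},2)$. Feeding this into the $TT^*$ machinery produces the asserted estimates on the $\alpha$-fractional admissible set $\tfrac{\alpha}{q}=d(\tfrac{1}{2}-\tfrac{1}{r})$, with $(q,r,d)\neq(\infty,2,2)$ excluded for the same Keel-Tao-type endpoint obstruction.

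The main obstacle in this proof sketch is in neither the $TT^*$ duality nor Christ-Kiselev (both of which are routine) but in the dispersive input: the $L^2$-endpoint Strichartz for $\alpha=2$, and the refined radial dispersive bound for $\alpha\in(\tfrac{2d}{2d-1},2)$. Since both ingredients are established in the cited works \cite{keel1998endpoint} and \cite[Corollary 3.4]{guo2014improved}, we simply quote them and the proposition follows.
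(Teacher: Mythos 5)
Your proposal is correct and matches the paper's treatment: the paper offers no proof of this proposition, simply quoting Keel--Tao for the $\alpha=2$ case and Guo--Wang (Corollary 3.4) for the radial fractional case, exactly as you do. Your sketch of the dispersive-estimate/$TT^*$ machinery behind those citations is accurate but not required, since the result is invoked as known.
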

For the sake of completeness,  we recall the  following standard existence result.  We shall see that  this result will play a vital role to  prove  global existence (Theorems \ref{global1}, \ref{global2}, and \ref{iglobal}). Specifically,  we have the following:
\begin{proposition}\label{miF}  Let $ \alpha>0,$  $0 < \gamma < \min\{\alpha, d\}$ and 
\begin{equation*}
X=\begin{cases} L^2(\R^d) & \text{if} \   \alpha =2, d\geq 1\\
 L_{rad}^2(\R^d) & \text{if} \   \frac{2d}{2d-1} < \alpha < 2, d\geq 2.
\end{cases}
\end{equation*}
If $ (\psi_{0,1}, ...., \psi_{0,N}) \in X^N$ then $(\#)$ has a unique global solution 
$$ (\psi_1,...,\psi_N)   \in  \big(C(\mathbb R, L^2(\rd))\cap L^{4\alpha/\gamma}_{loc}(\mathbb R, L^{4d/(2d-\gamma)}(\rd))\big)^N.$$ 
In addition, its $L^{2}$-norm is conserved, 
$\|\psi_k(t)\|_{L^{2}}=\|\psi_{0,k}\|_{L^{2}}, \   \forall t \in \mathbb R, k =1,2,...,N$
and for all $\alpha$-fractional admissible pairs  $(q,r),$ one has $  (\psi_1,..., \psi_N) \in  \left( L_{loc}^{q}(\mathbb R, L^r(\rd)) \right)^{N}.$ 
\end{proposition}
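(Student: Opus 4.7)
The strategy is a textbook Strichartz fixed-point argument at the $L^2$ level, in the style of Cazenave for the cubic Hartree equation. I would fix the $\alpha$-fractional admissible pair $(q,r):=(4\alpha/\gamma,\,4d/(2d-\gamma))$; admissibility $\alpha/q=\gamma/4=d(1/2-1/r)$ is direct, and $q>4$, $r>2$ follow from $\gamma<\alpha$ and $\gamma<d$, respectively. For the Duhamel operator
\begin{equation*}
\Phi(\psi)_k(t)=U_{\alpha}(t)\psi_{0,k}-i\int_{0}^{t} U_{\alpha}(t-s)\,N_{k}(\psi)(s)\,ds,
\end{equation*}
where $N_k(\psi)$ is the Hartree (plus Fock, in case of \eqref{hf}) nonlinearity of the $k$-th equation, the contraction is run on
\begin{equation*}
E_T=\bigl\{\psi\in(C([0,T],X)\cap L_T^{q}(L^{r}))^N:\ \|\psi\|_{L_T^{\infty}(L^{2})}+\|\psi\|_{L_T^{q}(L^{r})}\leq 2C_{0}\|\psi_0\|_{L^2}\bigr\},
\end{equation*}
with $C_{0}$ the Strichartz constant from Proposition \ref{fst}. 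In the subcritical fractional case $\frac{2d}{2d-1}<\alpha<2$ I would restrict each $\psi_k$ to be radial; since $(-\Delta)^{\alpha/2}$ and the nonlinearity commute with rotations, uniqueness keeps $L^{2}_{rad}$ invariant throughout the iteration.

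The heart of the matter is the trilinear spacetime estimate
\begin{equation*}
\|(V*(u\bar v))\,w\|_{L_T^{q'}(L^{r'})}\lesssim T^{\theta}\|u\|_{L_T^{q}(L^{r})}\|v\|_{L_T^{q}(L^{r})}\|w\|_{L_T^{q}(L^{r})},\qquad \theta:=1-\gamma/\alpha>0,
\end{equation*}
for $V(x)=e^{-a|x|}|x|^{-\gamma}$; the exponential factor is harmless via $|V(x)|\leq|x|^{-\gamma}$. The spatial bound follows from Hardy--Littlewood--Sobolev with the pair $(s_{0},s)=(2d/(2d-\gamma),\,2d/\gamma)$ satisfying $1/s_{0}=1/s+1-\gamma/d$, combined with one application of H\"older:
\begin{equation*}
\|(V*(u\bar v))w\|_{L^{r'}_{x}}\leq\|V*(u\bar v)\|_{L^{s}_{x}}\|w\|_{L^{r}_{x}}\lesssim\|u\bar v\|_{L^{s_{0}}_{x}}\|w\|_{L^{r}_{x}}\leq\|u\|_{L^{r}_{x}}\|v\|_{L^{r}_{x}}\|w\|_{L^{r}_{x}},
\end{equation*}
using $2s_{0}=r$. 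One further H\"older in time yields the factor $T^{\theta}$, since $3/q+\theta=1/q'$. Summing the finitely many Hartree and Fock contributions, combining with the inhomogeneous half of Proposition \ref{fst}, and establishing the parallel Lipschitz bound on $\Phi(\psi)-\Phi(\varphi)$ by multilinear telescoping, $\Phi$ is a contraction on $E_{T}$ whenever $T\lesssim \|\psi_{0}\|_{L^{2}}^{-2/\theta}$, producing the unique local solution.

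For the global extension, the essential input is mass conservation. Multiplying the $k$-th equation by $\bar\psi_{k}$, integrating in $x$, and taking imaginary parts annihilates the kinetic contribution (self-adjointness of $(-\Delta)^{\alpha/2}$) and the Hartree diagonal $\int(V*|\psi_{l}|^{2})|\psi_{k}|^{2}\,dx$ (real integrand); the Fock contribution equals $\kappa\sum_{l}\mathrm{Im}\int[V*(\bar\psi_{l}\psi_{k})]\,\overline{(\bar\psi_{l}\psi_{k})}\,dx$, which vanishes by Plancherel because $\widehat{V}$ is real. A standard regularization of the initial data makes this argument rigorous. Since the local time above depends only on $\|\psi_{0}\|_{L^{2}}$ and this norm is conserved, iterating the local construction extends $\psi$ to all of $\R$. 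Finally, substituting the global $\psi$ back into Proposition \ref{fst} with any $\alpha$-fractional admissible pair $(q_{1},r_{1})$ on the left and $(q,r)$-based control of the nonlinearity on the right gives $\psi\in L^{q_{1}}_{loc}(\R,L^{r_{1}})^{N}$. The only real technical care is verifying admissibility and HLS exponents and rigorously justifying conservation; the positive power $T^{\theta}$ granted by $\gamma<\alpha$ is what closes the global iteration.
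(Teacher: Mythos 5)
Your argument is correct and is essentially the paper's approach: the paper proves Proposition \ref{miF} by citing the standard $L^2$-Strichartz plus Hardy--Littlewood--Sobolev fixed-point argument with mass conservation (for $a=0$ in \cite[Propositions 4.2 and 4.3]{bhimani2020hartree}, see also \cite{carles2014cauchy, carles2015higher}), noting the Yukawa case is identical, which is exactly the argument you have written out, including the choice of the admissible pair $(4\alpha/\gamma, 4d/(2d-\gamma))$, the subcritical gain $T^{1-\gamma/\alpha}$, the radial restriction needed for the fractional Strichartz estimates, and the cancellation of the Fock term in the mass identity.
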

\begin{proof}[{\bf Proof}] For the proof  of  case $a=0,$ that is, $(\#)$ with  Coulomb  type potential,  see \cite[Propositions  4.2 and 4.3]{bhimani2020hartree}.  The proof of case $a>0,$ that is, $(\#)$ with Yukawa type potential is similar. Hence, we  omit the details. See also \cite[Proposition 2.3]{carles2014cauchy} and \cite[Theorem 4.9]{carles2015higher}.
\end{proof}
\subsection{Factorization formula  for Schr\"odinger propagator}\label{fls}
For $t\neq 0,$ we define  multiplication, dilation and reflection operators (for functions  $w$ on $\R^d$)  and their inverses as follows:
\begin{itemize}
\item  multiplication: $M_t w(x)=e^{i|x|^2/4t}w(x), M_t^{-1}w(x)= e^{-i|x|^2/4t} w(x)$
\item dilation: $D_tw(x)=\frac{1}{(4\pi it)^{d/2}}w \left(\frac{x}{4\pi t}\right)$  and $D_t^{-1}w(x)={(4\pi it)^{d/2}}w \left({4\pi t x}\right)$
\item reflection:  $Rw(x)=w(-x)$ and  $R^{-1} w(x)= w(-x).$
\end{itemize}

\begin{Lemma}\label{fl}
 Let $0\neq t \in \R$ and $\varphi \in \mathcal{S}(\R^d).$ Then we have  
$U(t)\varphi=M_tD_t\mathcal{F}M_t\varphi$ and $U(-t)\varphi=M_t^{-1}\F^{-1}D_t^{-1}M_t^{-1}\varphi.$
\end{Lemma}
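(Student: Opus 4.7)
My plan is to verify the first identity by unwinding the definitions of $M_t$, $D_t$, $\mathcal{F}$ in that order, matching the integral formula \eqref{0} for $U(t)$, and then to obtain the second identity by taking inverses on both sides.

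For the first identity, fix $\varphi\in\mathcal{S}(\R^d)$ and compute $M_tD_t\mathcal{F}M_t\varphi(x)$ from the innermost operator outward. First, $M_t\varphi(y)=e^{i|y|^2/4t}\varphi(y)$, a Schwartz function. Its Fourier transform at a point $\eta$ is
$$\mathcal{F}(M_t\varphi)(\eta)=\int_{\R^d}e^{-2\pi i y\cdot\eta}\,e^{i|y|^2/4t}\,\varphi(y)\,dy.$$
Applying $D_t$ substitutes $\eta=x/(4\pi t)$ and inserts the prefactor $(4\pi it)^{-d/2}$, turning the Fourier phase into $e^{-iy\cdot x/(2t)}$. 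Finally $M_t$ prepends an $e^{i|x|^2/4t}$ outside the integral. Pulling this factor inside and combining with $e^{-iy\cdot x/(2t)}$ and $e^{i|y|^2/4t}$ yields the completed square $e^{i|x-y|^2/4t}$, so
$$M_tD_t\mathcal{F}M_t\varphi(x)=\frac{1}{(4\pi it)^{d/2}}\int_{\R^d}e^{i|x-y|^2/4t}\varphi(y)\,dy,$$
which is exactly $U(t)\varphi(x)$ by \eqref{0}.

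For the second identity, I would argue by inverting the first. Each of $M_t$, $D_t$, $\mathcal{F}$ is a bijection on $\mathcal{S}(\R^d)$ with (two-sided) inverses given by $M_t^{-1}$, $D_t^{-1}$ and $\mathcal{F}^{-1}$ as defined in the excerpt (the first two being elementary, the third being standard Fourier inversion). Hence $M_tD_t\mathcal{F}M_t$ is a bijection on $\mathcal{S}(\R^d)$ with inverse
$$(M_tD_t\mathcal{F}M_t)^{-1}=M_t^{-1}\mathcal{F}^{-1}D_t^{-1}M_t^{-1}.$$
On the other hand, $\{U(t)\}_{t\in\R}$ is the unitary one-parameter group generated by $-(-\Delta)^{\alpha/2}$ with $\alpha=2$, so $U(t)^{-1}=U(-t)$ on $\mathcal{S}(\R^d)$ (which $U(t)$ preserves since its Fourier multiplier is $C^{\infty}$ with polynomially bounded derivatives). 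Inverting the first identity and using $U(t)^{-1}=U(-t)$ gives the second.

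The computation is essentially bookkeeping; the only step requiring any care is the algebra behind the completion of the square producing $|x-y|^2/4t$ from the three quadratic phases. Once that is in place, invoking $U(t)^{-1}=U(-t)$ closes the argument without any separate direct calculation for the second identity.
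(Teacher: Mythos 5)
Your proposal is correct. The verification of the first identity $U(t)\varphi=M_tD_t\F M_t\varphi$ is exactly the computation the paper has in mind (unwind $M_t$, $\F$, $D_t$, $M_t$ and complete the square to recover the kernel $e^{i|x-y|^2/4t}/(4\pi it)^{d/2}$ from \eqref{0}). For the second identity you take a slightly different route than the paper: you invert the factorization, using that $M_t$, $D_t$, $\F$ are bijections on $\mathcal{S}(\R^d)$ with the stated inverses and that $U(t)^{-1}=U(-t)$ on $\mathcal{S}(\R^d)$, so that $U(-t)=(M_tD_t\F M_t)^{-1}=M_t^{-1}\F^{-1}D_t^{-1}M_t^{-1}$; the paper instead applies the first identity at $-t$, writing $U(-t)\varphi=M_{-t}D_{-t}\F M_{-t}\varphi$, and then checks $M_{-t}=M_t^{-1}$ and the extra algebraic identity $D_{-t}\F=\F^{-1}D_t^{-1}$ by a short direct calculation. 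Your inversion argument avoids that extra calculation, at the mild cost of having to note that $e^{i|\cdot|^2/4t}$ has polynomially bounded derivatives (so $M_t$ preserves $\mathcal{S}$) and that the Schr\"odinger multiplier $e^{-4\pi^2 it|\xi|^2}$ does too (so $U(t)$ preserves $\mathcal{S}$ and the group relation $U(t)U(-t)=I$ makes sense there); both observations are standard, so either approach is perfectly sound.
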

\begin{proof}[{\bf Proof}]
Follows from formula \eqref{0} and the above  definitions,
see \cite[p.372]{hayashi1998asymptotics}. 
\end{proof}

Now, for $f,g, h \in \mathcal{S}(\R^d), a\geq 0, t\in \R,$ and $0<\gamma<d,$ we define \textbf{trilinear operators}  associated to Hartree-type nonlinearity  as follows
\begin{equation}\label{tf}
\h_{a,\gamma}(f,g,h)=\left[\frac{e^{-a|\cdot|}}{|\cdot|^\gamma}*(f\bar{g})\right]h,\quad
\widehat{\h}_{a,\gamma,t}(f,g,h)=\left[\left(S_{a,t}*|\cdot|^{\gamma-d}\right)(f*\bar{g})\right]*h
\end{equation}
where $S_{a,t}$ is as in \eqref{si}.
Now we decompose $\widehat{\h}_ {a,\gamma, t}$  in the following way 
\begin{equation}\label{dtf}
\widehat{\h}_{a,\gamma,t}^j(f,g,h):=\left[\left(S_{a,t}*k_j\right)(f*\bar{g})\right]*h,\quad  j=1,2
\end{equation}
where $k_1,k_2$ are given by
\begin{equation}\label{di}
k_1(x)= \chi_{\{|x|\leq1\}}(x)|x|^{\gamma-d},\quad
k_2(x)=\chi_{\{|x|>1\}}(x)|x|^{\gamma-d}.
\end{equation}
Note that $k_1 \in L^p(\R^d)$ for $1\leq p < \frac{d}{d-\gamma}$,  $k_2\in L^q (\R^d)$ for $\frac{d}{d-\gamma} <q \leq \infty$ and
$ \widehat{ \mathcal{H}}_{a, \gamma, t}= \widehat{ \mathcal{H}}_{a, \gamma, t}^1+ \widehat{ \mathcal{H}}_{a, \gamma, t}^2. $

\begin{Lemma}\label{2}
Let $ 0 \neq t \in \R, 0 < \gamma <d, a\geq 0,$ and $v_j(t)=U(-t)u_j(t) \in \mathcal{S}(\R^d)$  with $j=1,2,3.$ Then we have $U(-t)\h_{a,\gamma}(u_1,u_2,u_3)\asymp|t|^{-\gamma}M_t^{-1}\widehat{\h}_{a,\gamma,t}(M_tv_1,RM_tv_2,M_tv_3).$
\end{Lemma}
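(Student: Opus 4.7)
The plan is to plug the factorization $U(t) = M_t D_t \F M_t$ of Lemma \ref{fl} into each $u_j = U(t)v_j$ and to apply $U(-t) = M_t^{-1}\F^{-1}D_t^{-1}M_t^{-1}$ to $\h_{a,\gamma}(u_1,u_2,u_3)$, peeling off one operator at a time. Setting $\phi_j := \F M_t v_j$, so that $u_j = M_t D_t \phi_j$, one notes that $|M_t|=1$ gives $u_1\bar u_2 = (D_t\phi_1)\overline{(D_t\phi_2)}$, and the outer $M_t^{-1}$ exactly cancels the $M_t$ sitting on $u_3$, leaving
\begin{equation*}
M_t^{-1}\h_{a,\gamma}(u_1,u_2,u_3) = \bigl(K_{a,\gamma}*u_1\bar u_2\bigr)\cdot(D_t\phi_3),\qquad K_{a,\gamma}(y):=\frac{e^{-a|y|}}{|y|^{\gamma}}.
\end{equation*}

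Next I would apply $D_t^{-1}$. The change of variable $w = z/(4\pi t)$ inside $K_{a,\gamma}*u_1\bar u_2$ scales the kernel to $\tilde K_{a,\gamma,t}(y) := K_{a,\gamma}(4\pi t y) = (4\pi|t|)^{-\gamma} e^{-4\pi a|t||y|}|y|^{-\gamma}$, while the Jacobian factors $(4\pi it)^{\pm d/2}$ arising from $D_t^{-1}$ and from $(D_t\phi_3)(4\pi tx)$ cancel, giving
\begin{equation*}
D_t^{-1}M_t^{-1}\h_{a,\gamma}(u_1,u_2,u_3) = (\tilde K_{a,\gamma,t}*\phi_1\bar\phi_2)\cdot\phi_3.
\end{equation*}
The last operator $\F^{-1}$ is then handled by the swap rules $\F^{-1}(fg) = \F^{-1}f*\F^{-1}g$ and $\F^{-1}(f*g) = \F^{-1}f\cdot\F^{-1}g$. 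Using $\overline{\F f} = \F^{-1}\bar f$ twice yields $\F^{-1}\bar\phi_2 = R\,\overline{M_tv_2} = \overline{RM_tv_2}$, which is precisely what forces the middle argument to carry the reflection $R$ and reproduces the $f*\bar g$ pattern appearing in $\widehat{\h}_{a,\gamma,t}(M_tv_1, RM_tv_2, M_tv_3)$; similarly $\F^{-1}\phi_1 = M_tv_1$ and $\F^{-1}\phi_3 = M_tv_3$.

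The remaining identification $\F^{-1}\tilde K_{a,\gamma,t} \asymp |t|^{-\gamma}(S_{a,t}*|\cdot|^{\gamma-d})$ is obtained by writing $\tilde K_{a,\gamma,t}$ as the product $e^{-4\pi a|t||\cdot|}\cdot|\cdot|^{-\gamma}$ and again applying $\F^{-1}(fg) = \F^{-1}f*\F^{-1}g$: Lemma \ref{1}(1) combined with a dilation gives $\F^{-1}[e^{-4\pi a|t||\cdot|}] \asymp S_{a,t}$ (with the case $a=0$ reducing to $\F^{-1}1 = \delta_0 = S_{0,t}$), and Lemma \ref{1}(2) gives $\F^{-1}[|\cdot|^{-\gamma}] \asymp |\cdot|^{\gamma-d}$. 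Assembling the three factors and reintroducing the outer $M_t^{-1}$ produces exactly $|t|^{-\gamma}M_t^{-1}\widehat{\h}_{a,\gamma,t}(M_tv_1, RM_tv_2, M_tv_3)$. The principal technical obstacle is bookkeeping: correctly tracking the $(4\pi t)$-powers generated by $D_t^{\pm 1}$ and by the change of variables, and matching complex conjugation with the reflection so that the final convolution pattern is precisely $f*\bar g$ with $g = RM_tv_2$; the appearance of $\asymp$ rather than $=$ reflects the dimension-dependent constants $c_d$ and $C_{d,\gamma}$ from Lemma \ref{1}.
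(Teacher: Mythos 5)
Your proof is correct and follows essentially the same route as the paper: apply the factorization of Lemma \ref{fl}, peel off $M_t^{-1}$, $D_t^{-1}$, $\F^{-1}$ while tracking how the dilation rescales the Yukawa kernel (producing the $|t|^{-\gamma}$ factor and $e^{-4\pi a|t||\cdot|}$), identify $\F^{-1}e^{-4\pi a|t||\cdot|}\asymp S_{a,t}$ via Lemma \ref{1}, and match the conjugation with the reflection $R$ on the middle argument. The only cosmetic differences are that you substitute $u_j=M_tD_t\F M_tv_j$ at the outset rather than converting back at the end via $U(t)\bar u=\overline{U(-t)u}$, and you treat $a=0$ uniformly (using $\F^{-1}1=\delta_0$) where the paper cites Hyakuna's lemma.
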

\begin{proof}[{\bf Proof}]
For the case $a=0,$ see \cite[Lemma 2.1]{hyakuna2018global}. So it remains to prove the case $a>0$.
Note that $D_t^{-1}(fg)=(4\pi it)^{-d/2}(D_t^{-1}f)(D_t^{-1}g)$, $\F^{-1}D_t^{-1}=D_{-t}\F=cRD_t\F$ and  $U(t)\bar{u}= \overline{U(-t) u}.$
Using these  equalities   and performing  the change of variable, we  may rewrite 
\begin{align*}
& D_t^{-1}\left[\left(|\cdot|^{-\gamma}e^{-a|\cdot|}\right)*(fg)\right](x)=  (4\pi it)^{d/2}\left(\left(|\cdot|^{-\gamma}e^{-a|\cdot|}\right)*(fg)\right)(4\pi tx)\\
&=  (4\pi it)^{d/2}(4\pi t)^d\int_{\rd}|4\pi ty|^{-\gamma}e^{-4a\pi|ty|}(fg)(4\pi t(x-y))dy\\
& = (-4\pi it)^{d/2}(4\pi|t|)^{-\gamma}\left(\left(|\cdot|^{-\gamma}e^{-4a\pi|t\cdot|}\right)*\left(D_t^{-1}fD_t^{-1}g\right)\right)(x).
\end{align*}
Using the above  equalities and  Lemma \ref{fl}, we obtain
\begin{align*}
 &M_tU(-t)\h_{a,\gamma}(u_1,u_2,u_3)=\F^{-1}D_t^{-1}M_t^{-1}\h_{a,\gamma}(u_1,u_2,u_3)\\
&=\F^{-1}D_t^{-1}\left(\left(\left(|\cdot|^{-\gamma}e^{-a|\cdot|}\right)*((M_t^{-1}u_1)(M_t\bar{u_2}))\right)M_t^{-1}u_3\right)\\
&\propto t^{-d/2}\F^{-1}\left[D_t^{-1}\left(\left(|\cdot|^{-\gamma}e^{-a|\cdot|}\right)*((M_t^{-1}u_1)(M_t\bar{u_2}))\right)D_t^{-1}M_t^{-1}u_3\right]\\
&\propto |t|^{-\gamma}\F^{-1}\bigg[\left(\left(|\cdot|^{-\gamma}e^{-4a\pi|t\cdot|}\right)*((D_t^{-1}M_t^{-1}u_1)(D_t^{-1}M_t\bar{u_2}))\right) D_t^{-1}M_t^{-1}u_3\bigg]\\
&\propto|t|^{-\gamma}\big[\left(|\cdot|^{\gamma-d}*\F^{-1}e^{-4a\pi|t\cdot|}\right)((\F^{-1}D_t^{-1}M_t^{-1}u_1) \ *(\F^{-1}D_t^{-1}M_t\bar{u_2}))\big]*\F^{-1}D_t^{-1}M_t^{-1}u_3.
\end{align*}
Since, by Lemma \ref{1},
\begin{align*}
\left(\F^{-1}e^{-4a\pi|t\cdot|}\right)(\xi)&=\frac{c_d}{(2a|t|)^d}\left(1+\frac{|\xi|^2}{4a^2t^2}\right)^{-(d+1)/2}\propto\frac{a|t|}{(4a^2t^2+|\xi|^2)^{(d+1)/2}}=: S_{a,t} \ \  (a>0),
\end{align*}
it follows that 
\begin{align*}
M_tU(-t)\h_{a,\gamma}(u_1,u_2,u_3)&\propto|t|^{-\gamma}\left[\left(|\cdot|^{\gamma-d}*S_{a,t}\right)((M_tU(-t)u_1)*(RD_t\F M_t\bar{u_2}))\right] *M_tU(-t)u_3\\
&\propto|t|^{-\gamma}\big[\left(|\cdot|^{\gamma-d}*S_{a,t}\right)((M_tU(-t)u_1)*(RM_t^{-1}U(t)\bar{u_2}))\big] *M_tU(-t)u_3\\
&\propto|t|^{-\gamma}\big[\left(|\cdot|^{\gamma-d}*S_{a,t}\right)((M_tU(-t)u_1)*(R\overline{M_tU(-t)u_2}))\big]*M_tU(-t)u_3\\
&=|t|^{-\gamma}\widehat{\h}_{a,\gamma,t}(M_tv_1,RM_tv_2,M_tv_3).
\end{align*}
This completes the proof.
\end{proof}
 
%
 
\subsection{Trilinear estimates}\label{TE} 
In this subsection we prove some useful trilinear estimates for $\widehat{\h}_{a,\gamma,t}$ and $\h_{a,\gamma}$  (see \eqref{tf}).  We start with the following:
\begin{Lemma}\label{3}
Assume $0<\gamma<d$. Let $k_{j} \ (j=1,2)$ and $S_{a,t}$ be  given by \eqref{di} and \eqref{si} respectively. Then we  have 
$\n k_1*S_{a,t}\n_{L^{r_1}}\lesssim\n k_1\n_{L^{r_1}}$ and $\n k_2*S_{a,t}\n_{L^{r_2}}\lesssim\n k_2\n_{L^{r_2}} $
for all $r_1\in[1,\frac{d}{d-\gamma})$ and for all $r_2\in(\frac{d}{d-\gamma},\infty]$.
\end{Lemma}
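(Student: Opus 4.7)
\medskip
\noindent\textbf{Proof plan for Lemma \ref{3}.} The plan is to deduce both inequalities from Young's convolution inequality, provided we can show that $S_{a,t}$ is a uniformly bounded element of $L^1(\mathbb{R}^d)$. The case $a=0$ is immediate since $S_{0,t}=\delta_0$ is the identity under convolution, so $k_j * S_{0,t}=k_j$ and nothing needs to be proved. Hence the heart of the argument is the case $a>0$.

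First, I would verify the membership assumptions $k_1\in L^{r_1}(\mathbb{R}^d)$ and $k_2\in L^{r_2}(\mathbb{R}^d)$: passing to polar coordinates on $\{|x|\le 1\}$ gives integrability of $|x|^{(\gamma-d)r_1}$ exactly when $r_1<\tfrac{d}{d-\gamma}$, while on $\{|x|>1\}$ the tail integral of $|x|^{(\gamma-d)r_2}$ converges exactly when $r_2>\tfrac{d}{d-\gamma}$. These are precisely the stated ranges for $r_1$ and $r_2$.

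The main computation is to bound $\|S_{a,t}\|_{L^1}$. I would perform the change of variables $y=x/(2a|t|)$, under which
\begin{equation*}
\int_{\mathbb{R}^d}\frac{a|t|}{(4a^2t^2+|x|^2)^{(d+1)/2}}\,dx
=\int_{\mathbb{R}^d}\frac{a|t|\cdot (2a|t|)^{d}}{(2a|t|)^{d+1}(1+|y|^2)^{(d+1)/2}}\,dy
=\tfrac{1}{2}\int_{\mathbb{R}^d}\frac{dy}{(1+|y|^2)^{(d+1)/2}}.
\end{equation*}
The right-hand integral is a finite constant depending only on $d$ (since $(d+1)>d$ ensures integrability at infinity, and the integrand is bounded near the origin). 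Hence $\|S_{a,t}\|_{L^1}\lesssim 1$ uniformly in $a>0$ and $t\in\mathbb{R}$.

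With the uniform $L^1$ bound in hand, Young's inequality $\|f*g\|_{L^r}\le \|f\|_{L^r}\|g\|_{L^1}$ (valid for every $r\in[1,\infty]$) immediately gives
\begin{equation*}
\|k_j * S_{a,t}\|_{L^{r_j}}\le \|k_j\|_{L^{r_j}}\,\|S_{a,t}\|_{L^1}\lesssim \|k_j\|_{L^{r_j}},\qquad j=1,2,
\end{equation*}
which is the desired conclusion. I do not foresee a substantial obstacle here: the only step requiring genuine work is the scaling argument for $\|S_{a,t}\|_{L^1}$, and the key point (which explains why one can decouple the analysis from $a$ and $t$) is that the exponent $(d+1)/2$ in the definition of $S_{a,t}$ is precisely the one that makes it the $L^1$-normalized Poisson-type kernel, so a single dilation removes all dependence on the parameters.
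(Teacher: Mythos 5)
Your proof is correct and follows essentially the same route as the paper: a uniform bound $\|S_{a,t}\|_{L^1}\lesssim 1$ (for $a>0$, the case $a=0$ being trivial) followed by Young's inequality. The only difference is minor but pleasant: your single dilation $x\mapsto 2a|t|\,y$ evaluates the $L^1$-norm exactly in every dimension $d\ge 1$, whereas the paper estimates it in polar coordinates and has to treat $d=1$ and $d\ge 2$ separately (as it acknowledges in the remark following the lemma).
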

\begin{proof}[{\bf Proof}]
 The case $a=0$ being trivial assume  that $a>0$. Note that for $d=1,$ we have \begin{align*}
\left\n S_{a,t}\right\n_{L^1}=\int_{\rd}\frac{a|t|}{4a^2t^2+|\xi|^2}d\xi&\asymp\frac{1}{a|t|}\int_0^\infty\frac{dr}{1+(r/2a|t|)^2}\asymp\int_0^\infty\frac{ds}{1+s^2}\asymp1.
\end{align*}
 For $d\geq2$, we obtain  
\begin{align*}
\left\n S_{a,t}\right\n_{L^1}
&\asymp \int_0^\infty\frac{a|t|r^{d-1}}{(4a^2t^2+r^2)^{(d+1)/2}}dr\asymp a|t|\int_{4a^2t^2}^\infty\frac{(s-4a^2t^2)^{(d-2)/2}}{s^{(d+1)/2}}ds\leq a|t|\int_{4a^2t^2}^\infty\frac{s^{(d-2)/2}}{s^{(d+1)/2}}ds=1.
\end{align*}
Now Young inequality,  gives the desired inequalities.
\end{proof}
\begin{Remark}
Note that we separate the computation of $L^1$-norm for $S_{a,t}$ in two cases as  the third step in the proof of case $d\geq2$ does not work for $d=1$. 
\end{Remark}

\begin{proposition}[$L^p$-estimates]\label{le} Let $0<\gamma<d$, $f_j\in L^p(\rd)\cap L^2(\rd) \ (j=1,2,3)$ and   $\widehat{\h}_{a,\gamma,t}$, $\widehat{\h}_{a,\gamma,t}^j$  are given by  \eqref{tf} and  \eqref{dtf} respectively.
\begin{enumerate}
\item \label{lea}
Assume that  $1\leq p<2$ and $0<\gamma<2d(\frac{1}{p}-\frac{1}{2})$.  Then we have
\begin{equation*}\n\widehat{\h}_{a,\gamma,t}^j(f_1,f_2,f_3)\n_{L^2}\lesssim
\begin{cases}  \left\n f_1\right\n_{L^2}\left\n f_2\right\n_{L^2}\left\n f_3\right\n_{L^2} & \text{if} \ j=1\\
\left\n f_1\right\n_{L^p}\left\n f_2\right\n_{L^p}\left\n f_3\right\n_{L^2} & \text{if} \ j=2\\
\end{cases}
\end{equation*}
and 
\begin{equation*}
\n\widehat{\h}_{a,\gamma,t}^j(f_1,f_2,f_3)\n_{L^p}\lesssim 
\begin{cases} \n f_1\n_{L^2}\n f_2\n_{L^2}\n f_3\n_{L^p} & \text{if} \ j=1\\ 
\n f_1\n_{L^p}\n f_2\n_{L^p}\n f_3\n_{L^p} & \text{if} \ j=2.
\end{cases}
\end{equation*}
As a consequence, we have  $
\n\widehat{\h}_{a,\gamma,t}(f_1,f_2,f_3)\n_{L^p\cap L^2}\lesssim\prod_{j=1}^3\n f_j\n_{L^p\cap L^2}.$
\item \label{leb}  Assume that  $2<p\leq\infty$ and $0<\gamma<d(\frac{1}{2}-\frac{1}{p})$.  Then we have 
$
\n\widehat{\h}_{a,\gamma,t}(f_1,f_2,f_3)\n_{L^p}\lesssim\left\n f_1\right\n_{L^2}\left\n f_2\right\n_{L^2}\n f_3\n_{L^p\cap L^2}.
$
\end{enumerate}
\end{proposition}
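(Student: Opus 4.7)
My starting point is a three-step estimate for each piece $\widehat{\h}^j_{a,\gamma,t}(f_1,f_2,f_3)=[(S_{a,t}*k_j)(f_1*\bar{f_2})]*f_3$. First I apply Young's inequality to the outer convolution with $f_3$, then H\"older's inequality to the pointwise product, and then Young's inequality again to the inner convolution $f_1*\bar{f_2}$. This yields the master estimate
\begin{equation*}
\|\widehat{\h}^j_{a,\gamma,t}(f_1,f_2,f_3)\|_{L^v}\lesssim \|S_{a,t}*k_j\|_{L^{r_j}}\,\|f_1\|_{L^{a_1}}\|f_2\|_{L^{a_2}}\|f_3\|_{L^{a_3}},
\end{equation*}
valid whenever all intermediate exponents lie in $[1,\infty]$ and the scaling relation
\begin{equation*}
\tfrac{1}{v}+2=\tfrac{1}{r_j}+\tfrac{1}{a_1}+\tfrac{1}{a_2}+\tfrac{1}{a_3}
\end{equation*}
holds. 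Lemma \ref{3} then replaces $\|S_{a,t}*k_j\|_{L^{r_j}}$ by the $t$-independent quantity $\|k_j\|_{L^{r_j}}$, provided $r_1\in[1,\tfrac{d}{d-\gamma})$ for $j=1$ and $r_2\in(\tfrac{d}{d-\gamma},\infty]$ for $j=2$.

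\textbf{Executing the cases.} For part \eqref{lea}, the four desired inequalities follow by specializing the exponent choice, with admissibility of $r_j$ dictating the $\gamma$-constraint. For $\|\widehat{\h}^1\|_{L^2}$ I take $(v,a_1,a_2,a_3)=(2,2,2,2)$, forcing $r_1=1$, admissible whenever $\gamma>0$. For $\|\widehat{\h}^2\|_{L^2}$ I take $(2,p,p,2)$, giving $r_2=p/(2p-2)$; the requirement $r_2>d/(d-\gamma)$ is equivalent to $\gamma<2d(\tfrac{1}{p}-\tfrac{1}{2})$. For $\|\widehat{\h}^1\|_{L^p}$ I take $(p,2,2,p)$, again forcing $r_1=1$; and for $\|\widehat{\h}^2\|_{L^p}$ I take $(p,p,p,p)$, recovering the same $r_2$ and the same $\gamma$-constraint. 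Summing over $j=1,2$ and using $\|\cdot\|_{L^p\cap L^2}=\max\{\|\cdot\|_{L^p},\|\cdot\|_{L^2}\}$ completes the $L^p\cap L^2$ bound. For part \eqref{leb}, the same framework applies: for $\|\widehat{\h}^1\|_{L^p}$ I choose $(p,2,2,p)$ so $r_1=1$, and for $\|\widehat{\h}^2\|_{L^p}$ I choose $(p,2,2,2)$, which forces $r_2=2p/(p+2)$, admissible precisely when $\gamma<d(\tfrac{1}{2}-\tfrac{1}{p})$. Summing gives the stated $L^p$ bound with mixed $L^2$ and $L^p\cap L^2$ factors on the right.

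\textbf{Main obstacle.} With Lemma \ref{3} in hand there is no essential analytic difficulty; the proof reduces to careful exponent bookkeeping, so the effort is in organizing the cases cleanly. The key conceptual point is why the split $|\cdot|^{\gamma-d}=k_1+k_2$ is unavoidable: this kernel lies in neither $L^1$ nor $L^\infty$, so a single Young estimate cannot handle it, and the decomposition into a near-origin piece $k_1$ (integrable singularity, yielding $r_1<d/(d-\gamma)$) and a tail piece $k_2$ (integrable at infinity, yielding $r_2>d/(d-\gamma)$) together spans exactly the $L^{r_j}$-range needed to solve the scaling relation in each case. One must also check case by case that all intermediate Young and H\"older exponents lie in $[1,\infty]$; in part \eqref{lea} this uses $p<2$ to keep $f_1*\bar{f_2}$ in some $L^s$ with $s<\infty$, while in part \eqref{leb} the hypothesis $p>2$ is exactly what makes the intermediate exponent $u=2p/(p+2)\ge 1$ legitimate.
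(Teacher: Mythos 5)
Your proof is correct and follows essentially the same route as the paper: the same splitting $|\cdot|^{\gamma-d}=k_1+k_2$, Lemma \ref{3} to discard $S_{a,t}$, and the identical Young--H\"older--Young exponent choices (your $r_2=\tfrac{p}{2(p-1)}$ in part \eqref{lea} and $r_2=\tfrac{2p}{p+2}$ in part \eqref{leb} are exactly the paper's). The only cosmetic difference is that you bound $\left\Vert f_1*\bar f_2\right\Vert_{L^\infty}$ by the endpoint Young (Cauchy--Schwarz) inequality where the paper passes through Hausdorff--Young and Plancherel, which amounts to the same estimate.
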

\begin{proof}[{\bf Proof}] 
\eqref{lea} By   Young, H\"older,  Hausdorff-Young inequalities and Lemma \ref{3}, for $1\leq p \leq \infty,$ we have
\begin{align}\label{ns}
\left\n\left[(k_1*S_{a,t})(f_1*f_2)\right]*f_3\right\n_{L^p}&\leq\left\n(k_1*S_{a,t})(f_1*f_2)\right\n_{L^1}\left\n f_3\right\n_{L^p} \leq\left\n k_1*S_{a,t}\right\n_{L^1}\left\n f_1*f_2\right\n_{L^\infty}\left\n f_3\right\n_{L^p} \nonumber \\
&\leq\left\n k_1*S_{a,t}\right\n_{L^1}\left\n\widehat{f_1}\widehat{f_2}\right\n_{L^1}\left\n f_3\right\n_{L^p} \lesssim\left\n f_1\right\n_{L^2}\left\n f_2\right\n_{L^2}\left\n f_3\right\n_{L^p}.
\end{align}
Since 
$ \frac{1}{p/2(p-1)}+ \frac{1}{p/(2-p)}=1,\ \frac{1}{p}+\frac{1}{p}= 1+ \frac{1}{p/(2-p)}\ \text{and}\ \frac{p}{2(p-1)}>\frac{d}{d-\gamma}, $
H\"older and Young  inequalities  and Lemma \ref{3}  imply 
\begin{align*}
\left\n\widehat{\h}_{a,\gamma,t}^2(f_1,f_2,f_3)\right\n_{L^p}&=\left\n\left[(k_2*S_{a,t})(f_1*f_2)\right]*f_3\right\n_{L^p}\leq\left\n(k_2*S_{a,t})(f_1*f_2)\right\n_{L^1}\left\n f_3\right\n_{L^p}\\
&\leq\left\n k_2*S_{a,t}\right\n_{L^{\frac{p}{2(p-1)}}}\left\n f_1*f_2\right\n_{L^{\frac{p}{2-p}}}\left\n f_3\right\n_{L^p}\lesssim\prod_{l=1}^3\left\n f_l\right\n_{L^p}.
\end{align*}
 Similarly,
$\n\widehat{\h}_{a,\gamma,t}^2(f_1,f_2,f_3)\n_{L^2}
\leq\left\n(k_2*S_{a,t})(f_1*f_2)\right\n_{L^1}\left\n f_3\right\n_{L^2}\lesssim\left\n f_1\right\n_{L^p}\left\n f_2\right\n_{L^p}\left\n f_3\right\n_{L^2}.
$\\
\eqref{leb} Since
$ \frac{1}{p}+1 = \frac{1}{2}+ \frac{1}{(2p)/(p+2)}\ \text{and}\ \frac{2p}{p+2}>\frac{d}{d-\gamma}, $
Young inequality and  Lemma \ref{3}  give
\begin{align*}
\n\widehat{\h}_{a,\gamma,t}^2(f_1,f_2,f_3)\n_{L^p}&=\left\n\left[(k_2*S_{a,t})(f_1*f_2)\right]*f_3\right\n_{L^p}\leq \left\n(k_2*S_{a,t})(f_1*f_2)\right\n_{L^{\frac{2p}{p+2}}}\left\n f_3\right\n_{L^2}\\
&\leq \left\n k_2*S_{a,t}\right\n_{L^{\frac{2p}{p+2}}}\left\n f_1*f_2\right\n_{L^\infty}\left\n f_3\right\n_{L^2}\lesssim\prod_{l=1}^3\left\n f_l\right\n_{L^2}.
\end{align*}
Combining   the above inequality with \eqref{ns}, we get the desired estimate.
\end{proof}

\begin{Remark}\label{le'}
By \eqref{ns} it is clear that for all $p\in[1,\infty],$   $\n\widehat{H}^1_{a,\gamma,t}(f_1,f_2,f_3)\n_{L^p} \lesssim\left\n f_1\right\n_{L^2}\left\n f_2\right\n_{L^2}\left\n f_3\right\n_{L^p}$ holds for $\gamma$ in the wider range  $0<\gamma<d$.
\end{Remark}

\begin{proposition}[$\widehat{L}^p$-estimates] \label{lhe}
\begin{enumerate}
\item  \label{lheb}Assume that  $1\leq p<2$ and $0<\gamma<d(\frac{1}{p}-\frac{1}{2})$. Then we have 
$\left\n\h_{a,\gamma}(f_1,f_2,f_3)\right\n_{\widehat{L}^p}\lesssim\n f_1\n_{L^2}\n f_2\n_{L^2}\n f_1\n_{\widehat{L}^p\cap L^2}.$
\item \label{lhec}  Assume that $2<p\leq\infty$ and $0<\gamma<2d(\frac{1}{2}-\frac{1}{p}),$and let\begin{equation*}
X= \begin{cases} L^2(\rd)\cap \widehat{L}^p(\rd) & \text{if} \  a\geq 0\\
\widehat{L}^p(\rd) & \text{if} \ a>0.
\end{cases}
\end{equation*}
Then we have
$\left\n\h_{a,\gamma}(f_1,f_2,f_3)\right\n_{X}\lesssim\prod_{j=1}^3\left\n f_j\right\n_{X}.$
\end{enumerate}
\end{proposition}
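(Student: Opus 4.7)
The plan is to transfer everything to the Fourier side. Since $\|g\|_{\widehat{L}^p}=\|\F g\|_{L^{p'}}$ by definition and $\|g\|_{L^2}=\|\F g\|_{L^2}$ by Plancherel, the goal is to compute $\F[\h_{a,\gamma}(f_1,f_2,f_3)]$ and recognize it, up to harmless unitary operations (reflection, conjugation) and absolute constants, as $\widehat{\h}_{a,\gamma,t_0}(\F f_1,\F f_2,\F f_3)$ for a specific $t_0>0$. This reduces two of the three cases to Proposition \ref{le} applied on the Fourier side, while the third case requires a direct argument.

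First I would carry out the Fourier computation. Using $\F(fg)=\F f\ast\F g$ and $\F(\bar g)(\xi)=\overline{\F g(-\xi)}$, together with $\h_{a,\gamma}(f_1,f_2,f_3)=(K_{a,\gamma}\ast(f_1\bar{f_2}))f_3$ where $K_{a,\gamma}(x)=e^{-a|x|}|x|^{-\gamma}$, we obtain
\[
\F[\h_{a,\gamma}(f_1,f_2,f_3)] = \bigl[\widehat{K_{a,\gamma}}\cdot(\F f_1\ast\F(\bar{f_2}))\bigr]\ast \F f_3.
\]
Splitting $K_{a,\gamma}=e^{-a|\cdot|}\cdot|\cdot|^{-\gamma}$ as a product and applying the convolution theorem together with Lemma \ref{1} (plus a scaling argument for $\F(e^{-a|\cdot|})$) yields $\widehat{K_{a,\gamma}}\asymp S_{a,t_0}\ast|\cdot|^{\gamma-d}$ for the explicit choice $t_0=1/(4\pi)$; the case $a=0$ is consistent via $S_{0,t}=\delta_0$. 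Since reflection and complex conjugation preserve all $L^q$-norms, this gives
\[
\|\F[\h_{a,\gamma}(f_1,f_2,f_3)]\|_{L^q} \asymp \|\widehat{\h}_{a,\gamma,t_0}(\F f_1,\F f_2,\F f_3)\|_{L^q},\qquad q\in[1,\infty].
\]

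With this reduction in hand, part (\ref{lheb}) and the $X=L^2\cap\widehat{L}^p$ case of part (\ref{lhec}) follow by applying Proposition \ref{le} with the exponent $p$ there replaced by our $p'$. Indeed, in part (\ref{lheb}) the hypothesis $0<\gamma<d(1/p-1/2)$ reads $0<\gamma<d(1/2-1/p')$ with $p'\in(2,\infty]$, matching Proposition \ref{le}(\ref{leb}); the resulting bound, after Plancherel and the definition of $\widehat{L}^p$, is $\|f_1\|_{L^2}\|f_2\|_{L^2}\|f_3\|_{\widehat{L}^p\cap L^2}$. For part (\ref{lhec}) with $a\geq 0$ and $X=L^2\cap \widehat{L}^p$, the hypothesis $0<\gamma<2d(1/2-1/p)$ reads $0<\gamma<2d(1/p'-1/2)$ with $p'\in[1,2)$, matching Proposition \ref{le}(\ref{lea}), which yields $\prod_j\|f_j\|_{\widehat{L}^p\cap L^2}$.

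The main obstacle is the $X=\widehat{L}^p$ case of part (\ref{lhec}) with $a>0$, in which no $L^2$-norm is at our disposal and hence the decomposition underlying Proposition \ref{le}(\ref{lea}) is unusable. I would bypass it and estimate directly: Young's inequality ($L^1\ast L^{p'}\hookrightarrow L^{p'}$), H\"older, and one more Young (applied to $\F f_1\ast\F(\bar{f_2})$ with both factors in $L^{p'}$) give
\[
\|\F[\h_{a,\gamma}(f_1,f_2,f_3)]\|_{L^{p'}} \leq \|\widehat{K_{a,\gamma}}\|_{L^c} \prod_{j=1}^3 \|\F f_j\|_{L^{p'}},
\]
where $c=\frac{p'}{2(p'-1)}$ is forced by the Young condition $\frac{2}{p'}=1+\frac{1}{c'}$. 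It then remains to establish $\widehat{K_{a,\gamma}}\in L^c(\mathbb{R}^d)$ in the stated $\gamma$-range. Using $\widehat{K_{a,\gamma}}\asymp S_{a,t_0}\ast|\cdot|^{\gamma-d}$ and splitting the convolution over the regions $\{|\eta|\leq|\xi|/2\}$ and $\{|\eta|>|\xi|/2\}$, a pointwise argument shows that $\widehat{K_{a,\gamma}}$ is bounded near the origin (because $S_{a,t_0}$ is integrable and $|\cdot|^{\gamma-d}$ has a locally integrable singularity) and decays like $|\xi|^{\gamma-d}$ at infinity; hence $\widehat{K_{a,\gamma}}\in L^c$ iff $c>d/(d-\gamma)$, which upon substituting $c=\frac{p'}{2(p'-1)}$ rearranges exactly to $\gamma<2d(1/2-1/p)$. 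Verifying this $L^c$-bound (with a finite, though possibly $a$-dependent, constant) is the only genuinely new piece of analysis required and is carried out in the spirit of Lemma \ref{3}. Note also that this argument breaks down for $a=0$, since then $\widehat{K_{0,\gamma}}=C|\cdot|^{\gamma-d}$ is homogeneous and fails to lie in any $L^c$, which explains why the $L^2$-component cannot be dropped when $a=0$.
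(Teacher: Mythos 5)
Your proposal is correct and takes essentially the same route as the paper: the Fourier-side identification of $\F\h_{a,\gamma}(f_1,f_2,f_3)$ with $\widehat{\h}_{a,\gamma,1/(4\pi)}$ (up to reflection/conjugation) reduces part (1) and the $L^2\cap\widehat{L}^p$ case of part (2) to Proposition \ref{le} with $p$ replaced by $p'$, and for $a>0$, $X=\widehat{L}^p$ you run the same Young--H\"older--Young chain, your exponent $c=\frac{p'}{2(p'-1)}$ being exactly the paper's $p/2$. The only cosmetic difference is how $\F K\in L^{p/2}$ is verified: the paper writes $\F K=k_1*h_a+k_2*h_a$ and applies Young's inequality with $h_a\in L^1$, whereas you derive the pointwise bound $\widehat{K}\lesssim\min\{1,|\cdot|^{\gamma-d}\}$; both give precisely the condition $\gamma<2d(\tfrac12-\tfrac1p)$.
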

\begin{proof}[{\bf Proof}]
\eqref{lheb} Since $\left\n\h_{a,\gamma}(f_1,f_2,f_3)\right\n_{\widehat{L}^p}=\left\n\F\h_{a,\gamma}(f_1,f_2,f_3)\right\n_{L^{p'}}\asymp\n\widehat{\h}_{a,\gamma,\frac{1}{4\pi}}(\widehat{f_1},\widehat{f_2},R\widehat{f_3})\n_{L^{p'}}$
 using Proposition \ref{le} \eqref{leb} we have 
\begin{align*}
\left\n\h_{a,\gamma}(f_1,f_2,f_3)\right\n_{\widehat{L}^p}\lesssim\n\widehat{f_1}\n_{L^2}\n\widehat{f_2}\n_{L^2}\n\widehat{f_3}\n_{L^2\cap L^{p'}}\asymp\n f_1\n_{L^2}\n f_2\n_{L^2}\n f_1\n_{\widehat{L}^p\cap L^2}.
\end{align*}
\eqref{lhec}  Taking Proposition \ref{le}\eqref{lea} into account, and exploiting  the proof of  Proposition \ref{lhe}\eqref{lheb}, the assertion follows   when $X= L^p(\R^d)\cap L^2(\R^d).$

Next we assume that $X=\widehat{L}^p(\R^d).$  Set $K=e^{-a|\cdot|}|\cdot|^{-\gamma}$.  
Then  $\F K=k_1*h_a+k_2*h_a$ with $h_a(\xi)=\F e^{-a|\cdot|}= \frac{c_da}{(a^2+4\pi^2 |\xi|^2)^{(d+1)/2}}$ (see  \eqref{di} and Lemma \ref{1}) and so it follows that   $\F K\in L^r(\rd)$ for all $\frac{d}{d-\gamma}<r\leq\infty$. 
Since
$\frac{1}{p/2}+ \frac{1}{p'/(2-p')}=1,\ \frac{1}{p'}+ \frac{1}{p'}= 1+\frac{1}{p'/(2-p')},\ p'\leq 2 \ \text{and} \ \frac{p}{2}>\frac{d}{d-\gamma}, $
H\"older and Young inequalities imply
\begin{align*}
\left\n\h_{a,\gamma}(f_1,f_2,f_3)\right\n_{\widehat{L}^p}&
=\left\n\F\left(K*(f_1\overline{f_2})\right)*\F f_3\right\n_{L^{p'}}\leq\left\n\F\left(K*(f_1\overline{f_2})\right)\right\n_{L^1}\left\n\F f_3\right\n_{L^{p'}}\\
&\asymp\left\n\F K\F(f_1\overline{f_2})\right\n_{L^1}\left\n f_3\right\n_{\widehat{L}^p}\leq\left\n\F K\right\n_{L^{\frac{p}{2}}}\left\n\F(f_1\overline{f_2})\right\n_{L^{\frac{p'}{2-p'}}}\left\n f_3\right\n_{\widehat{L}^p}\\
&\asymp\left\n\F K\right\n_{L^{\frac{p}{2}}}\left\n\F f_1*\F\overline{f_2}\right\n_{L^{\frac{p'}{2-p'}}}\left\n f_3\right\n_{\widehat{L}^p}\leq\left\n\F K\right\n_{L^{\frac{p}{2}}}\left\n\F f_1\right\n_{L^{p'}}\left\n\F\overline{f_2}\right\n_{L^{p'}}\left\n f_3\right\n_{\widehat{L}^p}.
\end{align*}
But $\left\n\F\overline{f_2}\right\n_{L^{p'}}=\left\n R\overline{\F f_2}\right\n_{L^{p'}}=\left\n\overline{\F f_2}\right\n_{L^{p'}}=\left\n\F f_2\right\n_{L^{p'}}$. This completes the proof.
\end{proof}

\section{Proofs of the main results}\label{pmt}


\begin{Remark}[Strategy of proof for local  well-posedness]\label{mip} It is known that $U(t): L^p(\R^d) \to \ L^p(\R^d)$ if and only if $p=2$.  For this reason, it is believed that, one cannot expect to solve NLS  with initial data in  $L^p(\R^d)$ $(p\neq2)$  as the linear counterpart of NLS is ill-posed in $L^p(\R^d).$ However, we can  overcome this difficulty via the following strategy:
\begin{enumerate}
\item[(i)] Apply  $U(-t)$  to the integral form of  $(\#)$, that is to \eqref{5},  and search for solution $\psi$ so that 
$$\phi(t)=U(-t)\psi(t)\in  X_T=\big(C([0,T],L^p(\rd)\cap L^2(\rd))\big)^N.$$
 Now notice that the linear counterpart of \eqref{t5} is well-posed in $L^p(\R^d)$.  This idea is inspired by the work of  Zhou \cite{zhou2010cauchy} 
for the NLS in $L^p (\R) \ (1<p<2).$  
\item[(ii)] Invoke factorization formula (Lemma \ref{2}) to obtain   transformed integral operator, say $\Phi$ (see \eqref{6}).
\item[(iii)] Choose closed ball of radius  $b,$  and centered at the origin, say $ \mathcal{V}_b^T$,  in $X_T$ (we note that the choice of $\mathcal{V}_b^T$ varies as  the Lebesgue space  exponent $p$ varies).
\item[(iv)]  Apply  trilinear (Subsection \ref{TE}) and   Strichartz estimates to obtain $\Phi:\mathcal{V}_b^T\to \mathcal{V}_b^T$ is contraction, and hence the local existence. 
 \end{enumerate}  
\end{Remark}

\begin{Remark}
We shall give the proof only for the Hartree-Fock equation \eqref{hf}. The proof for the reduced  Hartree-Fock equation \eqref{rhf}  can be proved similarly  and hence we  shall omit the details.
\end{Remark}
In this section we shall prove  our existence  theorems (Theorem  \ref{lw} to Theorem \ref{iglobal}). To  this end, we  start with the following technical lemma.

\begin{Lemma}\label{13}
Let $ t \in \R, u_1,u_2 \in \mathcal{S}(\R^d\times  \R),$ and 
 \begin{align}\label{nl1}
(\Omega (u_1,u_2))(t)&=\left(M_tU(-t)u_1(t)\right)*\left(R\overline{M_tU(-t)u_2(t)}\right).
\end{align}
Then for $0<\rho<\infty$ we have $
\n\F\left((\Omega (u_1,u_2))(t)\right)\n_{L^\rho}\lesssim|t|^{d(1-1/\rho)}\n u_1(t)\n_{L^{2\rho}}\n u_2(t)\n_{L^{2\rho}}.$
\end{Lemma}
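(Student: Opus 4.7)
The plan is to reduce the estimate to a pointwise identity by exploiting the factorization formula from Lemma \ref{fl}, after which only a change of variables and the Cauchy--Schwarz inequality remain.

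First I would rewrite the operator $M_tU(-t)$ using Lemma \ref{fl}. Since $U(-t)=M_t^{-1}\F^{-1}D_t^{-1}M_t^{-1}$, multiplying by $M_t$ on the left yields $M_tU(-t)=\F^{-1}D_t^{-1}M_t^{-1}$. Taking $\F$ of both sides gives the key identity
\[ \F\bigl(M_tU(-t)\varphi\bigr)=D_t^{-1}M_t^{-1}\varphi, \]
whose pointwise modulus is $|4\pi t|^{d/2}\,|\varphi(4\pi t\,\cdot)|$, since $|M_t^{-1}|=1$ and the dilation $D_t^{-1}$ scales by $(4\pi it)^{d/2}$ and argument $4\pi t x$.

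Next I would convert the Fourier transform of the convolution $\Omega(u_1,u_2)(t)$ into a product. Using $\F(f*g)=\F f\cdot\F g$ together with the standard identities $\F(Rf)=R\F f$ and $\F(\bar g)=R\overline{\F g}$ (so that $\F(R\bar g)=\overline{\F g}$), applied to $g=M_tU(-t)u_2(t)$, I obtain
\[ \F\bigl(\Omega(u_1,u_2)(t)\bigr)(x)= D_t^{-1}M_t^{-1}u_1(t)(x)\cdot\overline{D_t^{-1}M_t^{-1}u_2(t)(x)}, \]
and therefore
\[ \bigl|\F\bigl(\Omega(u_1,u_2)(t)\bigr)(x)\bigr|\asymp |t|^{d}\,|u_1(t)(4\pi t x)|\,|u_2(t)(4\pi t x)|. \]

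Finally, I would take the $L^\rho$ norm and change variables $y=4\pi t x$, which contributes a Jacobian factor $|4\pi t|^{-d}$. The resulting estimate reads
\[ \bigl\|\F(\Omega(u_1,u_2)(t))\bigr\|_{L^\rho}^\rho\asymp |t|^{d(\rho-1)}\int_{\R^d}|u_1(t)(y)|^{\rho}|u_2(t)(y)|^{\rho}\,dy, \]
and a single application of Cauchy--Schwarz bounds the integral by $\|u_1(t)\|_{L^{2\rho}}^{\rho}\|u_2(t)\|_{L^{2\rho}}^{\rho}$. Taking $\rho$-th roots produces the claimed factor $|t|^{d(1-1/\rho)}$. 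There is no real obstacle here: the only point requiring care is the bookkeeping of the constants and signs of $t$ when applying the factorization formula and the conjugation/reflection rules for $\F$, which must be handled consistently so that the modulus identity above holds uniformly in $t\neq 0$ (and the case $t=0$ is trivial since the dilation disappears after taking norms, or can be excluded as $\Omega$ is only of interest for $t\neq 0$).
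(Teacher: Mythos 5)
Your argument is correct and follows essentially the same route as the paper: you use $\F(R\bar g)=\overline{\F g}$ and the convolution theorem to turn $\F(\Omega(u_1,u_2)(t))$ into a product, invoke the factorization $\F M_tU(-t)=D_t^{-1}M_t^{-1}$ from Lemma \ref{fl}, and finish with a change of variables plus Cauchy--Schwarz (the paper's H\"older step with exponents $(2,2)$). The only difference is cosmetic bookkeeping: the paper applies H\"older first and rescales each factor separately, while you rescale the product once; both yield the same factor $(4\pi|t|)^{d(1-1/\rho)}$.
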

\begin{proof}[{\bf Proof}]
Note that $\F R\overline{\varphi}=\overline{\F\varphi}$. Then
$
\F\left((\Omega (u_1,u_2))(t)\right)
=\left(\F M_tU(-t)u_1(t)\right)\left(\overline{\F M_tU(-t)u_2(t)}\right).
$
Therefore by H\"older inequality, we have 
\begin{equation}\label{14}
\n\F\left((\Omega (u_1,u_2))(t)\right)\n_{L^\rho}\leq\n\F M_tU(-t)u_1(t)\n_{L^{2\rho}}\n\F M_tU(-t)u_2(t)\n_{L^{2\rho}}.
\end{equation}
By  Lemma \ref{fl}, we have 
\begin{align*}
\n\F M_tU(-t)u_1(t)\n_{L^{2\rho}}&=\left(\int_{\rd}|D_t^{-1}M_t^{-1}u_1(t)|^{2\rho}  \right)^{1/2\rho}=\left((4\pi |t|)^{d\rho}\int_{\rd}|u_1(t)(4\pi tx)|^{2\rho}dx\right)^{1/2\rho}\\
&=\left((4\pi |t|)^{d\rho-d}\int_{\rd}|u_1(t)(x)|^{2\rho}dx\right)^{1/2\rho}=(4\pi |t|)^{(1-1/\rho)d/2}\n u_1(t)\n_{L^{2\rho}}
\end{align*}(here by $u_1(t)(y)$ we mean $u_1(t,y)$).
Using this  in \eqref{14},   we obtain the desired  inequality.
\end{proof}

\subsection{Local well-posedness in $L^p\cap L^2$}\label{plw}
%

\begin{proof}[{\bf First proof of Theorem \ref{lw}}]
By Duhamel's formula, we rewrite  \eqref{hf}  as
\begin{equation}\label{5}
\psi_k(t)=U(t)\psi_{0,k}+i\int_0^t U(t-s)(H_\psi\psi_k)(s)ds-i\int_0^t U(t-s)(F_\psi\psi_k)(s)ds.
\end{equation}
Writing $\psi_k(t)=U(t)\phi_k(t),$ we  have 
\begin{align}\label{t5}
\phi_k(t)&=\psi_{0,k}+i\int_0^t U(-s)(H_\psi\psi_k)(s)ds-i\int_0^t U(-s)(F_\psi\psi_k)(s)ds.
\end{align}
Let $\psi_0=(\psi_{0,1},\psi_{0,2},\cdots,\psi_{0,N})$. Using Lemma \ref{2}, we have 
\begin{align}\label{6}
\phi_k(t)&=\psi_{0,k}+ci\sum_{l=1}^N\sum_{j=1}^2\int_0^t s^{-\gamma}M_s^{-1}\widehat{\h}_{a,\gamma,s}^j(M_s\phi_l(s),RM_s\phi_l(s),M_s\phi_k(s))ds\\
&\ \ \ -ci\sum_{l=1}^N\sum_{j=1}^2\int_0^t s^{-\gamma}M_s^{-1}\widehat{\h}_{a,\gamma,s}^j(M_s\phi_k(s),RM_s\phi_l(s),M_s\phi_l(s))ds\nonumber:=\Phi_{\psi_0,k}(\mathbf{\phi})(t).
\end{align}
$\bullet$ \textbf{Case I:} $0<\gamma<\min\{1,\frac{d}{2}\}$ ($1\leq p\leq\infty$).

Let $q_1=\frac{8}{\gamma}, r=\frac{4d}{2d-\gamma},$
and introduce the space
\begin{align*}
V_b^T=\big\{v\in L_T^\infty(L^p(\rd)\cap L^2(\rd))&: \ \n v\n_{L_T^\infty(L^p\cap L^2)}\leq b,\\
&\ \n U(t)v(t)\n_{L_T^{q_2}(L^{2\rho})}\leq b,\ \n U(t)v(t)\n_{L_T^{q_1}(L^r)}\leq b\big\},
\end{align*} where $q_2,\rho$ to be chosen later.
We set
$\mathcal{V}_b^T=(V_b^T)^N$ and define the distance on it by
\begin{align*}
d(u,v)=\max\big\{\n u_j-v_j\n_{L_T^\infty(L^2\cap L^p)}&,\n U(t)[u_j(t)-v_j(t))]\n_{L_T^{q_2}(L^{2\rho})},\\
&\n U(t)[(u_j(t)-v_j(t)] \n_{L_T^{q_1}(L^r)}:\ j=1,2,\cdots,N\big\}.
\end{align*} Then  $(\mathcal{V}_b^T, d)$ is a complete metric space.
Next, we show that $\Phi_{\psi_0}:=(\Phi_{\psi_0,1},\cdots,\Phi_{\psi_0,N}),$ defined by  \eqref{6}, takes $\mathcal{V}_b^T$ into itself for suitable choices of $b$ and $T>0$.  Let $\phi=(\phi_1,...,\phi_N)\in \mathcal{V}_b^T.$ Denote
\begin{align}\label{inl}
I_{k,l,m}^j(t)=\int_0^t s^{-\gamma}M_s^{-1}\widehat{\h}_{a,\gamma,s}^j(M_s\phi_k(s),RM_s\phi_l(s),M_s\phi_m(s))ds,\quad j=1,2
\end{align} and 
\begin{eqnarray}\label{nl}
I_{k,l,m}(t)=I_{k,l,m}^1(t)+I_{k,l,m}^2(t).
\end{eqnarray}
By Remark \ref{le'}, we have 
\begin{align}\label{c1}
\left\n I_{k,l,m}^1(t)\right\n_{L^p}&=\left\n\int_0^t s^{-\gamma}M_s^{-1}\widehat{\h}_{a,\gamma,s}^1(M_s\phi_k(s),RM_s\phi_l(s),M_s\phi_m(s))ds\right\n_{L^p}\nonumber\\
&\lesssim\int_0^t s^{-\gamma}\n\phi_k(s)\n_{L^2}\n\phi_l(s)\n_{L^2}\n\phi_m(s)\n_{L^p} ds \lesssim b^3T^{1-\gamma}.
\end{align}
In view of \eqref{dtf} and \eqref{nl1}, we note that   
$$\widehat{\h}_{a,\gamma}^2(M_s\phi_l(s),RM_s\phi_l(s),M_s\phi_k(s))=\left[(k_2*S_{a,s})(\Omega(\psi_l, \psi_l))(s)\right]*(M_s\phi_k(s)).$$
By Young and H\"older  inequalities,  we  have 
\begin{align*}
\left\n I_{k,l,m}^2(t)\right\n_{L^p}&\lesssim\int_0^ts^{-\gamma}\n(k_2*S_{a,s})(\Omega(\psi_k,\psi_l))(s)\n_{L^1}\n\phi_m(s)\n_{L^p}ds\\
&\lesssim\int_0^ts^{-\gamma}\n(k_2*S_{a,s})\n_{L^\rho}\n\Omega(\psi_k,\psi_l)(s)\n_{L^{\rho'}}\n\phi_m(s)\n_{L^p}ds.
\end{align*}
Here we choose $\rho$ such that
\begin{eqnarray}\label{gr1}
\frac{d}{d-\gamma}<\rho<2.
\end{eqnarray} 
Note that we are able to choose such $\rho$ as $\gamma<d/2$.  By Lemmas \ref{3} and  \ref{13} and Hausdorff-Young inequality, we have
\begin{align*}
\left\n I_{k,l,m}^2(t)\right\n_{L^p}
&\lesssim\int_0^ts^{d-\gamma-d/\rho}\n\psi_k(s)\n_{L^{2\rho}}\n\psi_l(s)\n_{L^{2\rho}}\n\phi_m(s)\n_{L^p}ds.
\end{align*}
Note that $d-\gamma-\frac{d}{\rho}>0.$
Choose $q_2,q_3$ so that
\begin{eqnarray}\label{gr2}
\frac{1}{q_2}=\frac{d}{4}\bigg(1-\frac{1}{\rho}\bigg) \quad \text{and}\quad \frac{1}{q_3}=1-\frac{d}{2}\bigg(1-\frac{1}{\rho}\bigg).
\end{eqnarray} 
 By H\"older inequality, we have 
\begin{align}\label{c2}
\left\n I_{k,l,m}^2(t)\right\n_{L^p}&\lesssim T^{d-\gamma-d/\rho}\int_0^t\n\psi_k(s)\n_{L^{2\rho}}\n\psi_l(s)\n_{L^{2\rho}}\n\phi_m(s)\n_{L^p}ds\nonumber \\
&\lesssim T^{d-\gamma-d/\rho}\n\psi_k\n_{L_T^{q_2}(L^{2\rho})}\n\psi_l\n_{L_T^{q_2}(L^{2\rho})}\n\phi_m\n_{L_t^{q_3}(L^p)} \\
&\lesssim T^{d-\gamma-\frac{d}{\rho}+\frac{1}{q_3}}\n\psi_k\n_{L_T^{q_2}(L^{2\rho})}\n\psi_l\n_{L_T^{q_2}(L^{2\rho})}\n\phi_m\n_{L_t^\infty(L^p)}\nonumber\\
&\lesssim T^{d-\gamma-\frac{d}{\rho}+\frac{1}{q_3}}\n U(t)\phi_k\n_{L_T^{q_2}(L^{2\rho})}\n U(t)\phi_l\n_{L_T^{q_2}(L^{2\rho})}\n\phi_m\n_{L_t^\infty(L^p)} \lesssim T^{d-\gamma-\frac{d}{\rho}+\frac{1}{q_3}} b^3.\nonumber
\end{align}
Combining \eqref{6}, \eqref{c1} and  the above inequality, we have
\begin{align}\label{a1}
\left\n\Phi_{\psi_0,k}(\mathbf{\phi})\right\n_{L_T^\infty(L^p)}&\lesssim \n\psi_{0,k}\n_{L^p}+Nb^3(T^{1-\gamma}+T^{d-\gamma-\frac{d}{\rho}+\frac{1}{q_3}}).
\end{align}
For $(\underline{q},\underline{r})\in\{(q_1,r),(q_2,2\rho),(\infty,2)\}$ and $K=\frac{e^{-a|\cdot|}}{|\cdot|^\gamma},$   by  Proposition \ref{fst} we have 
\begin{align*}
\n U(t)I_{k,l,m}\n_{L^{\underline{q}}(L^{\underline{r}})}&\lesssim \n \left(K*(\psi_k\overline{\psi_l})\right)\psi_m\n_{L_T^{q_1'}(L^{r'})}.
\end{align*}
Note that 
$\frac{1}{{q_1}'}=\frac{4-\gamma}{4}+\frac{1}{q_1},\ \frac{1}{r'}=\frac{\gamma}{2d}+\frac{1}{r},\ \frac{4-\gamma}{4}=\frac{2}{q_1}+\frac{2-\gamma}{2}.$
By H\"older and Hardy-Littlewood-Sobolev inequalities, we have 
\begin{align}\label{b1}
\left\n \left(K*(\psi_k\overline{\psi_l})\right)\psi_m\right\n_{L_T^{{q_1}'}(L^{r'})}
&\leq\left\n\left\n K*(\psi_k\overline{\psi_l})\right\n_{L^{\frac{2d}{\gamma}}}\right\n_{L_T^{\frac{4}{4-\gamma}}}\left\n\left\n\psi_m\right\n_{L^r}\right\n_{L_T^{q_1}}\nonumber\\
&\leq\left\n\left\n|\cdot|^{-\gamma}*|\psi_k\overline{\psi_l}|\right\n_{L^{\frac{2d}{\gamma}}}\right\n_{L_T^{\frac{4}{4-\gamma}}}\left\n\psi_m\right\n_{L_T^{q_1}(L^r)}\nonumber\\
&\lesssim\left\n\left\n|\psi_k\overline{\psi_l}|\right\n_{L^{\frac{2d}{2d-\gamma}}}\right\n_{L_T^{\frac{4}{4-\gamma}}}\left\n\psi_m\right\n_{L_T^{q_1}(L^r)}\nonumber\\
&\leq\left\n\left\n\psi_k\right\n_{L^r}\left\n\psi_k\right\n_{L^r}\right\n_{L_T^{\frac{4}{4-\gamma}}}\left\n\psi_m\right\n_{L_T^{q_1}(L^r)}\nonumber\\
&\leq T^{1-\frac{\gamma}{2}}\left\n\psi_k\right\n_{L_T^{q_1}(L^r)}\left\n\psi_l\right\n_{L_T^{q_1}(L^r)}\left\n\psi_m\right\n_{L_T^{q_1}(L^r)}
\end{align}
and hence 
\begin{align}\label{a2}
\n U(t)I_{k,l,m}(t)\n_{L^{\underline{q}}(L^{\underline{r}})}&\lesssim T^{1-\gamma/2}\left\n\psi_k\right\n_{L_T^{q_1}(L^r)}\left\n\psi_l\right\n_{L_T^{q_1}(L^r)}\left\n\psi_m\right\n_{L_T^{q_1}(L^r)}.
\end{align}
Therefore by  \eqref{6} and Proposition \ref{fst}, we have 
\begin{align}\label{a2'}
\n U(t)\Phi_{\psi_0,k}(\mathbf{\phi})\n_{L^{\underline{q}}(L^{\underline{r}})}&\lesssim \n\psi_{0,k}\n_{L^2}+Nb^2T^{1-\gamma/2}.
\end{align}
Choose $b=2c\n\psi_0\n_{(L^2\cap L^p)^N}$ and $T>0$ small enough so that \eqref{a1}, \eqref{a2'} imply $\Phi_{\psi_0} (\phi)\in \mathcal{V}_b^T.$\\
Note that by tri-linearity of $\widehat{\h}_{a,\gamma, t}$, we have
\begin{align}\label{7}
\widehat{\h}_{a,\gamma,s}(f_1,f_2,f_3)-\widehat{\h}_{a,\gamma,s}(g_1,g_2,g_3)&=\widehat{\h}_{a,\gamma,s}(f_1-g_1,f_2,f_3)+\widehat{\h}_{a,\gamma,s}(g_1,f_2-g_2,f_3)\\
&\ \ \ +\widehat{\h}_{a,\gamma,s}(g_1,g_2,f_3-g_3).\nonumber
\end{align}
Using this, for $u,v\in\mathcal{V}_b^T,$  
 arguing as above in \eqref{a1}, we have
\begin{align}\label{a3}
\left\n\Phi_{\psi_0,k}(u)-\Phi_{\psi_0,k}(v)\right\n_{L_T^\infty(L^p)}&\lesssim Nb^2(T^{1-\gamma}+T^{d-\gamma-\frac{d}{\rho}+\frac{1}{q_3}})d(u,v)
\end{align}
and on the other hand as in \eqref{a2}
\begin{align}\label{a4}
\n U(t)[\Phi_{\psi_0,k}(u)(t)-\Phi_{\psi_0,k}(v)(t)]\n_{L^{\underline{q}}(L^{\underline{r}})}\lesssim T^{1-\gamma/2}Nb^2d(u,v).
\end{align}
Using  \eqref{a3} and \eqref{a4}, we may conclude that  $\Phi_{\psi_0}: \mathcal{V}_b^T\to \mathcal{V}_b^T$ is  a contraction provided $T$ is sufficiently small (depending on  $\|\psi_{0,1}\|_{L^p\cap L^2},..., \|\psi_{0, N}\|_{L^p\cap L^2}, d, \gamma, N$). Then, by Banach contraction principle, there exists a unique $(\phi_1, ...,\phi_N) \in \mathcal{V}_b^T$ solving \eqref{6}.\\
$\bullet$ \textbf{Case II:} $0<\gamma<\min\{1,2d(\frac{1}{p}-\frac{1}{2})\}$ and $1\leq p<2$ (improves Case I when $1\leq p<\frac{4}{3}$).

For $b, T>0,$ let
$$V_b^T=\{v\in L^\infty\big((0,T),L^p\cap L^2(\rd)\big):\n v\n_{L^\infty_T(L^p\cap L^2)}\leq b\}.$$  We set $\mathcal{V}_b^T=(V_b^T)^N$ and define the distance on it by
$$d(u,v)=\max\big\{\n u_j-v_j\n_{L^\infty_T(L^p\cap L^2)}:j=1,2,\cdots,N\big\},$$
where $u=(u_1,u_2\cdots,u_N),v=(v_1,v_2,\cdots,v_N)\in\mathcal{V}_b^T$. Then  $(\mathcal{V}_b^T, d)$ is a complete metric space.
Next, we show that the mapping $\Phi_{\psi_0},$ defined by  \eqref{6}, takes $\mathcal{V}_b^T$ into itself for  a suitable choice of $b$ and small  $T>0$.  Let $\phi=(\phi_1,...,\phi_N)\in \mathcal{V}_b^T.$ Choose $b$ so that $\n\psi_0\n_{(L^p\cap L^2)^N}=b/2$. 
Then, by Proposition \ref{le}\eqref{lea}, for $0<t<T$, we obtain
\begin{align*}
\n\Phi_{\psi_0,k}(\mathbf{\phi})(t)\n_{L^p\cap L^2}&\leq\frac{b}{2}+cN\int_0^t s^{-\gamma}\n M_s\phi_l(s)\n_{L^p\cap L^2}^2\n M_s\phi_k(s)\n_{L^p\cap L^2}ds\\
&\leq \frac{b}{2}+2cNb^3\int_0^ts^{-\gamma}ds=\frac{b}{2}+\frac{2cNb^3}{1-\gamma}T^{1-\gamma}
\end{align*}
 Now we choose $T>0$ small enough so that $\frac{2cNb^2}{1-\gamma}T^{1-\gamma}\leq\frac{1}{2}$ to achieve $\n\Phi_{\psi_0,k}(\mathbf{\phi})(t)\n_{L^p\cap L^2}\leq b$
 for all  $k=1,..., N.$  Hence $\Phi_{\psi_0}$ is a map from $\mathcal{V}_b^T$ to itself with the above choices of $b$ and $T$.
For $u,v\in\mathcal{V}_b^T,$ by Proposition \ref{le} \eqref{lea}, \eqref{7} and arguing as above, we obtain
\begin{align*}
d\left(\Phi_{\psi_0}(u),\Phi_{\psi_0}(v)\right)\lesssim Nb^2\int_0^t s^{-\gamma}\n u_l(s)-v_l(s)\n_{L^p\cap L^2}ds\lesssim Nb^2T^{1-\gamma}d\left(u,v\right).
\end{align*}
Thus   $\Phi_{\psi_0}: \mathcal{V}_b^T\to \mathcal{V}_b^T$ is a contraction map provided that $T$ is sufficiently small (depending on  $\|\psi_{0,1}\|_{L^p\cap L^2},..., \|\psi_{0, N}\|_{L^p\cap L^2}, d, \gamma,N$). Then, by Banach contraction principle, there exists a unique $(\phi_1, ...,\phi_N) \in \mathcal{V}_b^T$ solving \eqref{6}.
\end{proof}
 In \cite{zhou2010cauchy},  Zhou proved local existence  for the  cubic NLS in  $L^p (\R)$  by 
introducing a function space (to be defined below) based on the fundamental theorem
of calculus and the Schr\"odinger propogator.   Specifically, for $T>0,1\leq p,q\leq\infty$ and $\theta\geq0,$ the  Zhou spaces  $\widetilde{X}_{q,\theta}^p(T),$   $\widetilde{Y}_{q,\theta}^p(T),$ and $Y_{q,\theta}^p(T)$  are given  by 
$$\widetilde{X}_{q,\theta}^p(T)=\big\{v:[0,T]\times\rd\rightarrow\C\ :\ \n v\n_{\widetilde{X}_{q,\theta}^p(T)}:=\n t^{\theta}\n(\partial_tv)(t,\cdot)\n_{L^p}\n_{L_T^q}<\infty\big\},$$
$$\widetilde{Y}_{q,\theta}^p(T)=\big\{v\in\widetilde{X}_{q,\theta}^p(T)\ :\n v\n_{\widetilde{Y}_{q,\theta}^p(T)}:=\n v(0)\n_{L^p}+\n v\n_{\widetilde{X}_{q,\theta}^p(T)}< \infty  \big\}$$
and
\begin{equation}\label{zs}
Y_{q,\theta}^p(T)=\big\{u:[0,T]\times\rd\rightarrow\C\ |\ U(-t)u(t)\in\widetilde{Y}_{q,\theta}^p(T)\big\}.
\end{equation}

Now using the Zhou spaces, we will  briefly give a different proof of local  existence  in $L^p(\R^d)\cap L^2(\R^d)$  $(p\in[1,\infty]\setminus\{2\})$  for $(\#)$ with certain $\gamma$'s. 
Strictly speaking,  in this case,  we  will prove the local existence
in $\widetilde{Y}_{\infty,\gamma}^p(T)$  which is continuously embedded in  $C([0, T], L^p(\R^d))$: 
\begin{Lemma}[Lemma 2.1 \cite{hyakuna2019global}]\label{inclu1}
Let $0<T<\infty, p\geq1$ and $0<\gamma<1.$ Then
$\widetilde{Y}_{\infty,\gamma}^p(T)\hookrightarrow C([0, T], L^p(\R^d)).$
\end{Lemma}
The key estimates for $\widetilde{Y}_{\infty, \gamma}^{p}(T)$-regularity of the local existence is:

\begin{Lemma}\label{8} Denote the Duhamel type operator by
\begin{equation}\label{dhtp}
\mathcal{D}_{a,\gamma}(v_1,v_2,v_3)(t)=\int_0^tM_s^{-1}s^{-\gamma}\widehat{\h}_{a,\gamma,s}(M_sv_1(s),RM_sv_2(s),M_sv_3(s))ds.
\end{equation}
\begin{enumerate}
\item\label{8a} Assume $0<\gamma<1$. Then
$\n \mathcal{D}_{a,\gamma}(v_1,v_2,v_3)\n_{\widetilde{X}_{2/\gamma,0}^2(T)}\lesssim\prod_{l=1}^3\n v_l\n_{\widetilde{Y}_{1,0}^2(T)}.$
\item\label{8b} Assume $0<\gamma<\min\{1,2d(\frac{1}{p}-\frac{1}{2})\}$ when $1\leq p<2$ and $0<\gamma<\min\{1,d(\frac{1}{2}-\frac{1}{p})\}$ when $2<p\leq\infty$. Then
$\n \mathcal{D}_{a,\gamma}(v_1,v_2,v_3)\n_{\widetilde{X}_{\infty,\gamma}^p(T)}\lesssim\prod_{j=1}^3\n v_j\n_{\widetilde{Y}_{1,0}^2(T)\cap\widetilde{Y}_{1,0}^p(T)}.$
\end{enumerate}
\end{Lemma}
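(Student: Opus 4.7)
The plan is to treat the two assertions separately: assertion \eqref{8a} via the factorization in Lemma \ref{2} followed by Hardy--Littlewood--Sobolev and Strichartz, and assertion \eqref{8b} as a direct application of Proposition \ref{le}. For \eqref{8a} I would first set $u_l(s) := U(s) v_l(s)$; then Lemma \ref{2} identifies the integrand of $\mathcal{D}_{a,\gamma}(v_1,v_2,v_3)$, up to an absolute constant, with $U(-s) \mathcal{H}_{a,\gamma}(u_1(s), u_2(s), u_3(s))$. Since $U(-t)$ is an $L^2$-isometry, differentiating in $t$ gives
$$\|\partial_t \mathcal{D}_{a,\gamma}(v_1,v_2,v_3)(t)\|_{L^2} \asymp \|\mathcal{H}_{a,\gamma}(u_1(t), u_2(t), u_3(t))\|_{L^2}.$$
A H\"older / Hardy--Littlewood--Sobolev chain (using the pointwise bound $e^{-a|\cdot|}/|\cdot|^\gamma \leq |\cdot|^{-\gamma}$) then yields the symmetric trilinear estimate
$$\|\mathcal{H}_{a,\gamma}(u_1,u_2,u_3)\|_{L^2} \lesssim \prod_{l=1}^3 \|u_l\|_{L^r}, \qquad r := \frac{6d}{3d-2\gamma},$$
where $r$ is forced by $3/r = (3d-2\gamma)/(2d)$. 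A quick check shows that the pair $(q,r) := (6/\gamma, 6d/(3d-2\gamma))$ is $2$-admissible for every $0 < \gamma < 1$ and every $d \geq 1$, and $3/q = \gamma/2$, so H\"older in time produces
$$\|\partial_t \mathcal{D}_{a,\gamma}(v_1,v_2,v_3)\|_{L^{2/\gamma}_T(L^2)} \lesssim \prod_{l=1}^3 \|U(t) v_l(t)\|_{L^{6/\gamma}_T(L^r)}.$$
Writing $w_l := U(\cdot) v_l(\cdot)$ one computes $(\partial_t - i\Delta) w_l = U(t) \partial_t v_l$ with $w_l(0) = v_l(0)$, so Proposition \ref{fst}\eqref{fst1} delivers $\|w_l\|_{L^{6/\gamma}_T(L^r)} \lesssim \|v_l(0)\|_{L^2} + \|\partial_s v_l\|_{L^1_T(L^2)} = \|v_l\|_{\widetilde{Y}_{1,0}^2(T)}$, which assembles to \eqref{8a}.

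For assertion \eqref{8b} the argument is more direct. Since $|M_t w| = |w|$ pointwise,
$$t^\gamma \|\partial_t \mathcal{D}_{a,\gamma}(v_1,v_2,v_3)(t)\|_{L^p} = \|\widehat{\mathcal{H}}_{a,\gamma,t}(M_t v_1(t), R M_t v_2(t), M_t v_3(t))\|_{L^p},$$
and the hypotheses on $(p, \gamma)$ are precisely those of Proposition \ref{le}\eqref{lea} (when $1 \leq p < 2$) and of Proposition \ref{le}\eqref{leb} (when $2 < p \leq \infty$). In either case the $L^p$ conclusion of that proposition bounds the right-hand side by $\prod_{j=1}^3 \|v_j(t)\|_{L^p \cap L^2}$, and the fundamental theorem of calculus delivers $\|v_j(t)\|_{L^p \cap L^2} \lesssim \|v_j\|_{\widetilde{Y}_{1,0}^p(T) \cap \widetilde{Y}_{1,0}^2(T)}$. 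Taking the supremum in $t \in (0, T)$ finishes the proof.

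The principal obstacle lies in \eqref{8a}: one is tempted to split $\widehat{\mathcal{H}}_{a,\gamma,t} = \widehat{\mathcal{H}}_{a,\gamma,t}^1 + \widehat{\mathcal{H}}_{a,\gamma,t}^2$ and apply the $L^2$-part of Proposition \ref{le}\eqref{lea} piece-by-piece, but the $L^2$-bound there for $\widehat{\mathcal{H}}^2$ involves two factors in $L^p$ with $p<2$, not in $L^2$. This is the flaw flagged in Remark \ref{SR1}, and the viable workaround is to keep $\widehat{\mathcal{H}}_{a,\gamma,t}$ undecomposed, undo the factorization via Lemma \ref{2}, and invoke Strichartz on the Schr\"odinger-like quantities $U(t) v_l(t)$. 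The remaining bookkeeping---checking admissibility of $(6/\gamma, 6d/(3d-2\gamma))$ and matching the H\"older exponents in time---is routine.
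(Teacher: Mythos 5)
Your proposal is correct and follows essentially the same route as the paper: for part (1) you undo the factorization via Lemma \ref{2}, use the $L^2$-isometry of $U(-t)$, run the H\"older/Hardy--Littlewood--Sobolev chain to get the product of $L^{6d/(3d-2\gamma)}$-norms, apply H\"older in time with exponent $6/\gamma$, and close with Strichartz on $U(t)v_l(t)$ written through the fundamental theorem of calculus (your Duhamel/inhomogeneous-Strichartz phrasing with $L^1_TL^2$ forcing is the same computation the paper does via Minkowski plus the homogeneous estimate); for part (2) you apply Proposition \ref{le} directly and conclude by the fundamental theorem of calculus, exactly as in the paper. You also correctly identify the key point, emphasized in Remarks \ref{SR1}--\ref{SR2}, that $\widehat{\h}_{a,\gamma,t}$ must not be decomposed in the $L^2$-estimate.
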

\begin{proof}[{\bf Proof}]
\eqref{8a} In view of \eqref{tf}, we have 
\begin{align*}
\widehat{\h}_{a,\gamma,t}(M_tv_1(t),RM_tv_2(t),M_tv_3(t))&=\left[\left(S_{a,t}*|\cdot|^{\gamma-d}\right)(M_tv_1(t)*\overline{RM_tv_2(t)})\right]*M_tv_3(t).
\end{align*}
Then using Lemma \ref{2} we get
\begin{align*}
I:=\left\n\partial_tD_{a,\gamma}(v_1,v_2,v_3)\right\n_{L_T^{2/\gamma}(L^2)}&\asymp\left\n t^{-\gamma}\widehat{\h}_{a,\gamma,t}(M_tv_1(t),RM_tv_2(t),M_tv_3(t))\right\n_{L_T^{2/\gamma}(L^2)}\\
&\asymp\left\n U(-t)\h_{a,\gamma}(U(t)v_1(t),U(t)v_2(t),U(t)v_3(t)\right\n_{L_T^{2/\gamma}(L^2)}\\
&=\left\n \h_{a,\gamma}(U(t)v_1(t),U(t)v_2(t),U(t)v_3(t)\right\n_{L_T^{2/\gamma}(L^2)}.
\end{align*}
By  H\"older and  Hardy-Littlewood-Sobolev inequalities, we have 
\begin{align*}
I&\lesssim\left\n\left\n\left(|\cdot|^{-\gamma}e^{-a|\cdot|}\right)*\left((U(t)v_1(t))\overline{(U(t)v_2(t))}\right)\right\n_{L^{\frac{3d}{\gamma}}}\n U(t)v_3(t)\n_{L^{\frac{6d}{3d-2\gamma}}}\right\n_{L_T^{\frac{2}{\gamma}}}\\
&\lesssim\left\n\left\n|\cdot|^{-\gamma}*\left|(U(t)v_1(t))\overline{(U(t)v_2(t))}\right|\right\n_{L^{\frac{3d}{\gamma}}}\n U(t)v_3(t)\n_{L^{\frac{6d}{3d-2\gamma}}}\right\n_{L_T^{\frac{2}{\gamma}}}\\
&\lesssim\left\n\left\n(U(t)v_1(t))\overline{(U(t)v_2(t))}\right\n_{L^{\frac{3d}{3d-2\gamma}}}\n U(t)v_3(t)\n_{L^{\frac{6d}{3d-2\gamma}}}\right\n_{L_T^{\frac{2}{\gamma}}}\\
&\leq\left\n\prod_{l=1}^3\n U(t)v_l(t)\n_{L^{\frac{6d}{3d-2\gamma}}}\right\n_{L_T^{\frac{2}{\gamma}}}\leq\prod_{l=1}^3\n U(t)v_l(t)\n_{L_T^{\frac{6}{\gamma}}\big(L^{\frac{6d}{3d-2\gamma}}\big)}.
\end{align*}
Using the fundamental theorem of calculus, we have \begin{align}\label{9}
v_l(t)=v_l(0)+\int_0^t\partial_sv_l(s)ds\quad\Longrightarrow\quad  U(t)v_l(t)=U(t)v_l(0)+\int_0^tU(t)\partial_sv_l(s)ds.
\end{align}
Put $q=\frac{2}{\gamma}$ and $r=\frac{6d}{3d-2\gamma}$. 
Then  by the above equality we have
\begin{align*}
\n U(t)v_l(t)\n_{L_T^{3q}(L^r)}&\leq\n U(t)v_l(0)\n_{L_T^{3q}(L^r)}+\left\n\int_0^tU(t)\partial_sv_l(s)ds\right\n_{L_T^{3q}(L^r)}\\
&\leq\n U(t)v_l(0)\n_{L_T^{3q}(L^r)}+\left\n\int_0^t\n U(t)\partial_sv_l(s)\n_{L^r}ds\right\n_{L_T^{3q}}.
\end{align*}
By Minkowski inequality, we have 
\begin{align*}
\left\n\int_0^t\n U(t)\partial_sv_l(s)\n_{L^r}ds\right\n_{L_T^{3q}}
&\leq\int_0^T\left(\int_0^T\n U(t)\partial_sv_l(s)\n_{L^r}^{3q}dt\right)^{1/3q}ds=\int_0^T\n U(t)\partial_sv_l(s)\n_{L^{3q}(L^r)}ds.
\end{align*}
Therefore, $(3q,r)$ being a 2-admisible pair, we get 
\begin{equation*}
\n U(t)v_l(t)\n_{L_T^{3q}(L^r)}\lesssim\n v_l(0)\n_{L^2}+\int_0^T\n \partial_sv_l(s)\n_{L^2}ds=\n v_l\n_{\w{Y}_{1,0}^2(T)}.
\end{equation*}
\eqref{8b} In view of  Proposition \ref{le},  we obtain
\begin{align*}
\n \mathcal{D}_{a,\gamma}(v_1,v_2,v_3)\n_{\widetilde{X}_{\infty,\gamma}^p(T)}&=\sup_{s\in[0,T]}s^\gamma\n\partial_s\mathcal{D}_{a,\gamma}(v_1,v_2,v_3)(s)\n_{L^p}\\
&=\sup_{s\in[0,T]}\n\widehat{\h}_{a,\gamma,s}(M_sv_1(s),RM_sv_2(s),M_sv_3(s))\n_{L^p}\\
&\lesssim\sup_{s\in[0,T]}\prod_{l=1}^3\n v_l(s)\n_{L^2\cap L^p}\lesssim\prod_{l=1}^3\n v_l\n_{\widetilde{Y}_{1,0}^2(T)\cap\widetilde{Y}_{1,0}^p(T)},
\end{align*}
where the last inequality follows from \eqref{9} by taking $L^2,L^p$-norms on both sides of it.
\end{proof}

\begin{Remark} \label{SR1}
The local well-posedenss proved in \cite[Theorem 1.3]{hyakuna2018global}  for Hartree equation in $L^p(\rd)\cap L^2(\rd)$  for $1<p\leq 2$  depend on  \cite[Proposition 4.1]{hyakuna2018global}. We found that there is a subtle flaw in it. 
Specifically, while proving a result similar to Lemma \ref{8} \eqref{8a} with $a=0,1<p\leq2$ in \cite[see eq. (4.6), p.1093]{hyakuna2018global},  the following  inequality 
\begin{equation}\label{100}
\n t^{-\gamma}\widehat{\mathcal{H}}_{0,\gamma,t}^j(v_1,v_2,v_3)\n_{L_T^q(L^2)}\leq c\n t^{-\gamma}\widehat{\mathcal{H}}_{0,\gamma,t}(v_1,v_2,v_3)\n_{L_T^q(L^2)}
\end{equation}
where  $\widehat{\mathcal{H}}_{0,\gamma,t}$ and $ \widehat{\mathcal{H}}_{0,\gamma,t}^j$ are as in \eqref{tf} and \eqref{dtf} respectively, has been  crucially used. However, we find that    \eqref{100} is not true. 

We shall justify our claim by producing a counter example. Note that  \eqref{100} implies 
 \begin{equation}\label{1001}
\n \widehat{\mathcal{H}}_{0,\gamma,t}^j(v_1,v_2,v_3)\|_{L^2}\leq c\n \widehat{\mathcal{H}}_{0,\gamma,t}(v_1,v_2,v_3)\n_{L^2}
\end{equation} 
 for all $t$ in  some positive measured subset of $[0, T]$.  If possible, we assume that $\eqref{100}$ is true. Let $k_1$  and $k_2$ are as  in \eqref{di}  and $k:=k_1+k_2.$ Then, by Plancherel theorem, we have 
   \begin{align*}
\n\widehat{\mathcal{H}}_{0,\gamma,t}^j(f,g,h)\n_{L^2}\leq c\n\widehat{\mathcal{H}}_{0,\gamma,t}(f,g,h)\n_{L^2}&\Longleftrightarrow\n [k_j(f\ast\bar{g})]\ast h\n_{L^2}\leq c\n [k(f\ast\bar{g})]\ast h\n_{L^2}\\
&\Longleftrightarrow\n \F [k_j(f\ast\bar{g})]\F{h} \n_{L^2}\leq c\n \F[k(f\ast\bar{g})]\F h\n_{L^2}.
\end{align*}
Thus, we have
\begin{align*}
\int_{\rd}\left(\left|\F [k_j(f\ast\bar{g})]\right|^2-c^2\left|\F [k(f\ast\bar{g})]\right|^2\right)|\F h|^2\leq0.
\end{align*} Since $h$ is arbitrary we conclude $\left|\F [k_j(f\ast\bar{g})](\xi)\right|\leq c\left|\F [k(f\ast\bar{g})](\xi)\right|$ for a.e. $\xi\in\rd$ . If we choose compactly supported  $f\ast\bar{g}$ in the space $L^\infty(\rd)$, so that $\F [k_j(f\ast\bar{g})],\F [k(f\ast\bar{g})]$ are continuous, then 
we would have 
\begin{align}\label{110}
\left|\F [k_j(f\ast\bar{g})](\xi)\right|\leq c\left|\F [k(f\ast\bar{g})](\xi)\right|\quad\text{ for all }\xi\in\rd,\ j=1,2.
\end{align} 
\begin{center}
\begin{figure}
\begin{tikzpicture}[scale=.75]
\draw[dashed,gray] (-7,0)--(7,0) node[anchor=west] {\tiny{}};
\draw[dashed,gray][->] (0,-2)--(0,2) node[anchor=north east] {\tiny{$ $}};
\draw[<-] (-7,0)--(-5,0)--(-3.5,-1.5)--(-2,0)--(-.7,0)--(0,.7)--(.7,0)--(2,0);
\draw[->](2,0)--(3.5,-1.5)--(5,0)--(7,0);
\draw[dashed,gray] (-6,.7)--(6,.7)node[anchor=west]{\textcolor{black}{\tiny{$a$}}};
\draw[dashed,gray] (-6,-1.5)--(6,-1.5)node[anchor=west]{\textcolor{black}{\tiny{$-b$}}};
\filldraw[black](.7,0)circle(1pt)node[anchor=north ]{\tiny{$(a,0)$}};
\filldraw[black](-.7,0)circle(1pt)node[anchor=north ]{\tiny{$(-a,0)$}};
\filldraw[black](2,0)circle(1pt)node[anchor=south ]{\tiny{$(2,0)$}};
\filldraw[black](-2,0)circle(1pt)node[anchor=south ]{\tiny{$(-2,0)$}};
\filldraw[black](3.5,0)circle(1pt)node[anchor=north ]{\tiny{$(2+b,0)$}};
\filldraw[black](-3.5,0)circle(1pt)node[anchor=north ]{\tiny{$(-2-b,0)$}};
\filldraw[black](5,0)circle(1pt)node[anchor=south ]{\tiny{$(2+2b,0)$}};
\filldraw[black](-5,0)circle(1pt)node[anchor=south ]{\tiny{$(-2-2b,0)$}};
\end{tikzpicture}
\caption{{Graph of $H_{a,b}=f\ast\bar{g}$ is indicated in black line for $d=1$.}}\label{fig1}
\end{figure}
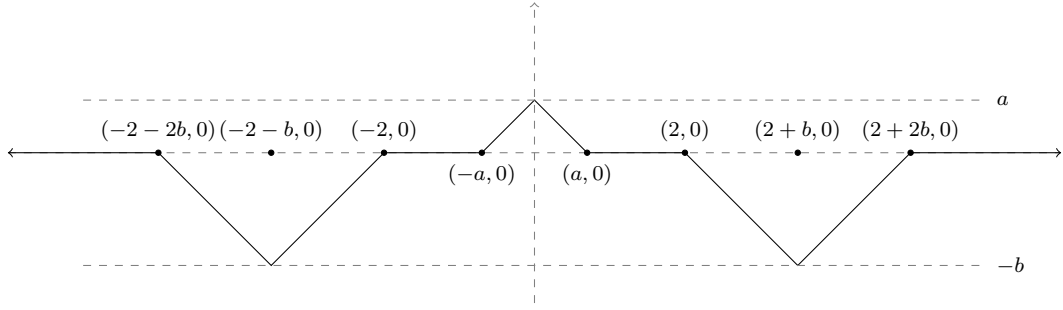
\end{center} 
For simplicity we  take dimension $d=1$ and so $0<\gamma<1.$ Let $0<a<1, b>0$ (to be chosen later). Define  $H_{a,b}:\mathbb R \to \mathbb R$ as follows
\begin{align*}
H_{a,b}(x)=
\begin{cases}
a-x, &\text{ for }0\leq x<a\\
0, &\text{ for }a\leq x<2\text{ and }2+2b\leq x<\infty\\
2-x, &\text{ for }2\leq x<2+b\\
x-2-2b, &\text{ for }2+b\leq x<2+2b\\
H_{a,b}(-x), &\text{ for }-\infty< x<0.
\end{cases}
\end{align*} The graph of  $H_{a,b}$  is given in Figure \ref{fig1} and it is clear that $H_{a,b}\in L^\infty(\R)$ is compactly supported. Note that $H_{a,b}$ is a finite linear combinations of triangle functions.  So $H_{a,b}$ can also be viewed as the finite linear combination of functions of the form $\chi_{A}\ast\chi_B$ where $A,B$ are finite intervals in $\R$.  Thus, we have $H_{a,b} \in \widehat{L}^{\infty}(\mathbb R)$. We recall Riesz factorization criterion:  $\widehat{L}^\infty(\rd)=L^2(\rd)\ast L^2(\rd)$, see for e.g., \cite[Theorem B]{liflyand2011conditions}. In view of this, there exists $f, g \in L^2(\mathbb R)$ such that $H_{a,b}=f\ast \bar{g}.$ We shall  now compute the LHS and RHS of \eqref{110} at point 0.
In fact, 
\begin{align*}
\F [k(f\ast\bar{g})](0)&=\int_\R|x|^{\gamma-1}(f\ast\bar{g})(x) dx\\
&=2\int_0^a x^{\gamma-1}(a-x)dx-2\int_2^{2+b}x^{\gamma-1}(x-2)dx-2\int_{2+b}^{2+2b}x^{\gamma-1}(2+2b-x)dx\\
&=\frac{2}{\gamma(\gamma+1)}\left[a^{\gamma+1}-\{(2+2b)^{\gamma+1}-2(2+b)^{\gamma+1}+2^{\gamma+1}\}\right]= \frac{2}{\gamma(\gamma+1)}\left[a^{\gamma+1}-G(b)\right],
\end{align*}
where $G(b)=(2+2b)^{\gamma+1}-2(2+b)^{\gamma+1}+2^{\gamma+1}.$ 
Note that $G(0)=0$ and by using strict convexity of $x\mapsto x^{1+\gamma}$ on $(0,\infty)$ for $\gamma>0$, we have  $G(b)>0$ for $b>0$. Choose  $0<a<1$ small and adjust $b>0$ so that $a^{\gamma+1}=G(b)$. Hence, we obtain  $\F [k(f\ast\bar{g})](0)=0.$ But note that 
\begin{align*}
\left|\F [k_1(f\ast\bar{g})](0)\right|=\int_{|x|\leq1}|x|^{\gamma-1}(f\ast\bar{g})(x)
=2\int_0^a x^{\gamma-1}(a-x)dx=\frac{2a^{\gamma+1}}{\gamma(\gamma+1)}>0
\end{align*} and this contradicts \eqref{110} as the RHS is zero in \eqref{110} for $\xi=0$. Thus \eqref{100} is not true.
\end{Remark}
\begin{Remark}\label{SR2} Because of the  issue discussed in Remark \ref{SR1} we do not decompose  Duhamel type operator  $\mathcal{D}_{a,\gamma}$ (defined in \eqref{dhtp} above) in two parts. We rather  work on $\mathcal{D}_{a,\gamma}$ without decomposing into two parts  but still get the nonlienar estimates of  Lemma \ref{8}.
\end{Remark}

\begin{proof}[{\bf Second proof of Theorem \ref{lw}}] Here we assume $0<\gamma<\min\{1,2d(\frac{1}{p}-\frac{1}{2})\}$ when $1\leq p<2$ and $0<\gamma<\min\{1,d(\frac{1}{2}-\frac{1}{p})\}$ when $2<p\leq\infty$.\\
For  $b, T>0,$ let 
$V_b^T(v_0)=\big\{v\in\widetilde{Y}_{2/\gamma,0}^2(T)\cap\widetilde{Y}_{\infty,\gamma}^p(T)\ : \ \n v\n_{\widetilde{X}_{2/\gamma,0}^2(T)\cap\widetilde{X}_{\infty,\gamma}^p(T)}\leq b,v(0)=v_0\big\}.$  For $\psi_0=(\psi_{0,1},\cdots,\psi_{0,N})\in (L^p(\rd)\cap L^2(\rd))^N$, introduce the space 
$\mathcal{V}_b^T(\psi_0)=V_b^T(\psi_{0,1})\times V_b^T(\psi_{0,2})\times\cdots\times V_b^T(\psi_{0,N})$
 and define the distance on it by 
$d(u,v)=\max\big\{\n u_j-v_j\n_{\widetilde{X}_{2/\gamma,0}^2(T)\cap\widetilde{X}_{\infty,\gamma}^p(T)}:\ j=1,2,\cdots,N\big\}.$
Next, we show that the mapping $\Phi_{\psi_0},$ defined by  \eqref{6}, takes $\mathcal{V}_b^T(\psi_0)$ into itself for suitable choice of $b$ and small  $T>0$.  Let $\phi=(\phi_1,...,\phi_N)\in \mathcal{V}_b^T(\psi_0).$ Since $\n\psi_{k,0}\n_{\widetilde{X}_{2/\gamma,0}^2(T)\cap\widetilde{X}_{\infty,\gamma}^p(T)}=0,$  by  Lemma  \ref{8}, we have 
\begin{align*}
&\n \Phi_{\psi_0,k}(\phi)\n_{\widetilde{X}_{2/\gamma,0}^2(T)\cap\widetilde{X}_{\infty,\gamma}^p(T)}\\
&\lesssim \sum_{l=1}^N\n \mathcal{D}_{a,\gamma}(\phi_l,\phi_l,\phi_k)\n_{\widetilde{X}_{2/\gamma,0}^2(T)\cap\widetilde{X}_{\infty,\gamma}^p(T)}+\sum_{l=1}^N\n \mathcal{D}_{a,\gamma}(\phi_k,\phi_l,\phi_l)\n_{\widetilde{X}_{2/\gamma,0}^2(T)\cap\widetilde{X}_{\infty,\gamma}^p(T)}\\
&\lesssim\n\phi_k\n_{\widetilde{Y}_{1,0}^2(T)\cap\widetilde{Y}_{1,0}^p(T)}\sum_{l=1}^N\n\phi_l\n_{\widetilde{Y}_{1,0}^2(T)\cap\widetilde{Y}_{1,0}^p(T)}^2.
\end{align*}
By H\"older inequality, we have $\n\phi_l\n_{\widetilde{X}_{1,0}^2(T)} \leq T^{1-\gamma/2}\n\phi_l\n_{\widetilde{X}_{\gamma/2,0}^2(T)}$ and $\n\phi_l\n_{\widetilde{X}_{1,0}^p(T)} \leq T^{1-\gamma}\n\phi_l\n_{\widetilde{X}_{\infty,\gamma}^p(T)}$.
Therefore, we  have 
$$\n\phi_l\n_{\widetilde{Y}_{1,0}^2}\lesssim\n\phi_{l,0}\n_{L^2}+T^{1-\gamma/2}\n\phi_l\n_{\widetilde{X}_{2/\gamma,0}^2(T)}\ \ \text{ and }\ \ \n\phi_l\n_{\widetilde{Y}_{1,0}^p}\lesssim\n\phi_{l,0}\n_{L^p}+T^{1-\gamma}\n\phi_l\n_{\widetilde{X}_{\infty,\gamma}^p(T)}.$$
Hence, by taking $0<T<1,$ we obtain
\begin{align*}
\n \Phi_{\psi_0,k}(\phi)\n_{\widetilde{X}_{2/\gamma,0}^2(T)\cap\widetilde{X}_{\infty,\gamma}^p(T)}\lesssim N\left(\n\psi_0\n_{L^2\cap L^p}^3+T^{3(1-\gamma)}b^3\right).
\end{align*}
We set $b=2cN\n\psi_0\n_{(L^p\cap L^2)^N}^3$
 and $T>0$ small so that  we have $\n \Phi_{\psi_0,k}(\phi)\n_{\widetilde{X}_{2/\gamma,0}^2(T)\cap\widetilde{X}_{\infty,\gamma}^p(T)}\leq b.$
Consequently, we have $\Phi_{\psi_0} (\phi)\in\mathcal{V}_b^T(\psi_0)$. To check $\Phi_{\psi_0}$ is a contraction, for $u,v\in\mathcal{V}_b^T(\psi_0)$,
using \eqref{7} together with Lemma \ref{8}, we can conclude
\begin{align*}
d\left(\Phi_{\psi_0}(u),\Phi_{\psi_0}(v)\right)&\lesssim T^{1-\gamma}(\n\psi_0\n_{L^2\cap L^p}^2+T^{2(1-\gamma)}b^2)\sum_{l=1}^N\n u_l-v_l\n_{\widetilde{X}_{2/\gamma,0}^2(T)\cap\widetilde{X}_{\infty,\gamma}^p(T)}\\
&\lesssim NT^{1-\gamma}(\n\psi_0\n_{L^2\cap L^p}^2+T^{2(1-\gamma)}b^2)d(u,v).
\end{align*}
Thus $\Phi_{\psi_0}:\mathcal{V}_b^T(\psi_0) \to \mathcal{V}_b^T(\psi_0)$ is a contraction provided $T>0$ is small enough.\\
 Finally, we note that 
 by Proposition \ref{miF}, we  have  $\psi\in C([0,T],L^2(\rd))^N,$ and consequently $\phi\in C([0,T],L^2(\rd))^N$. By Lemma \ref{inclu1} we have $\phi\in C([0,T],L^p(\rd))^N$ for $0<\gamma<1$.
\end{proof}

\subsection{Local well-posedness in $\widehat{L}^p\cap L^2$}\label{plhw}

%

\begin{Lemma}\label{4}
For all $t\in \mathbb R$ and $0<\alpha<\infty$  the fractional Schr\"odinger propagator $U_\alpha(t)=e^{-it(-\Delta)^{\alpha/2}}$  is an isometry on $\widehat{L}^p(\mathbb R^d) \ (1\leq p \leq \infty),$ that is, 
$\|e^{-it (-\Delta)^{\alpha/2}}f\|_{\widehat{L}^p}=\|f\|_{\widehat{L}^p}.$
\end{Lemma}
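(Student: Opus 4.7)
The plan is to reduce the claim to the statement that multiplication by a unimodular function preserves every $L^{p'}$ norm, after moving to the Fourier side.

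First, I would recall the definition of the fractional Schr\"odinger propagator given in \eqref{sg}: for Schwartz $f$,
\[
\mathcal{F}[U_\alpha(t) f](\xi) = e^{-i(2\pi)^\alpha |\xi|^\alpha t}\, \widehat{f}(\xi).
\]
This identity extends to all $f \in \widehat{L}^p(\mathbb{R}^d)$ by interpreting $U_\alpha(t)$ as the Fourier multiplier with symbol $e^{-i(2\pi)^\alpha |\xi|^\alpha t}$, defined via $U_\alpha(t) f := \mathcal{F}^{-1}\bigl(e^{-i(2\pi)^\alpha |\cdot|^\alpha t}\,\widehat{f}\bigr)$; since $e^{-i(2\pi)^\alpha |\xi|^\alpha t} \widehat{f}(\xi) \in L^{p'}(\mathbb{R}^d)$ whenever $\widehat{f} \in L^{p'}(\mathbb{R}^d)$, the inverse Fourier transform is a well-defined element of $\mathcal{S}'(\mathbb{R}^d)$, so $U_\alpha(t) f$ lies in $\widehat{L}^p(\mathbb{R}^d)$.

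Next, by the definition of the $\widehat{L}^p$ norm,
\[
\|U_\alpha(t) f\|_{\widehat{L}^p} = \bigl\|\mathcal{F}[U_\alpha(t) f]\bigr\|_{L^{p'}} = \bigl\|e^{-i(2\pi)^\alpha |\cdot|^\alpha t}\,\widehat{f}\bigr\|_{L^{p'}}.
\]
Since the symbol $\xi \mapsto e^{-i(2\pi)^\alpha |\xi|^\alpha t}$ has modulus identically $1$ on $\mathbb{R}^d$ for every $t \in \mathbb{R}$ and $\alpha > 0$, the pointwise identity $\bigl|e^{-i(2\pi)^\alpha |\xi|^\alpha t}\,\widehat{f}(\xi)\bigr| = |\widehat{f}(\xi)|$ holds a.e., and therefore the $L^{p'}$ norm is unchanged for every $1 \leq p \leq \infty$ (where $p'$ is the H\"older conjugate, with the usual convention $p' = 1$ when $p = \infty$ and vice versa). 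Consequently,
\[
\|U_\alpha(t) f\|_{\widehat{L}^p} = \|\widehat{f}\|_{L^{p'}} = \|f\|_{\widehat{L}^p},
\]
which proves the isometry property.

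There is essentially no obstacle here; the only mild care needed is to make sense of $U_\alpha(t)$ on $\widehat{L}^p$ for $p \neq 2$ (so that Plancherel is not available), which is handled by defining it as a Fourier multiplier on the dual side and observing that $L^{p'}$ is stable under multiplication by $L^\infty$ functions.
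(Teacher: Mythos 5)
Your proof is correct and follows essentially the same route as the paper: pass to the Fourier side, note that the symbol $e^{-i(2\pi)^\alpha|\xi|^\alpha t}$ is unimodular, and conclude that the $L^{p'}$ norm of $\widehat{f}$ is unchanged. The extra remarks on defining $U_\alpha(t)$ as a Fourier multiplier on $\widehat{L}^p$ are a harmless elaboration of what the paper leaves implicit.
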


%
%

\begin{proof}[{\bf First proof of Theorem \ref{lhw}}]
By Duhamel's formula, we rewrite  \eqref{hf} as 
\begin{align}\label{12}
\psi_k(t)&=U_{\alpha}(t)\psi_{0,k}+i\int_0^t U_\alpha(t-s)(H_\psi\psi_k)(s)ds-i\int_0^t U_\alpha(t-s)(F_\psi\psi_k)(s)ds\\
& :=\Psi_{\psi_0, k}(\psi)(t).\nonumber
\end{align}
$\bullet$ \textbf{Case I:} $0<\gamma<\min\{\alpha,\frac{d}{2}\}$ ($1\leq p\leq\infty$). 

 Hereafter, for  $\alpha\in(\frac{2d}{2d-1},2),$ we assume  initial data are radial and $d\geq 2.$  In fact, in this case, the  members of  $U_b^T,$ to be defined below, are radial functions.   For the  notational convenience, we omit mentioning this   explicitly in the proof below.
Let $s=\alpha/2$ and ${q_1}=\frac{8s}{\gamma}, r=\frac{4d}{2d-\gamma},$ and  for $T, b>0,$  introduce the space
$$U_b^T=\big\{v\in L_T^\infty\big(L^2(\rd)\cap\widehat{L}^p(\rd)\big):\n v\n_{L_T^\infty(L^2\cap\widehat{L}^p)}\leq b,\n v\n_{L_T^{q_1}(L^r)}\leq b, \n v\n_{L_T^{2q_2}(L^{2\rho})}\leq b\big\},$$
where $q_2,\rho$ to be chosen later.   We set $\mathcal{U}_b^T=(U_b^T)^N$ and  the  distance on it by
$$d(u,v)=\max\big\{\n u_j-v_j\n_{L_T^\infty(L^2\cap\widehat{L}^p)},\n u_j-v_j\n_{L_T^{q_1}(L^r)},\n u_j-v_j\n_{L_T^{2q_2}(L^{2\rho})}:j=1,2,\cdots,N\big\},$$
where $u=(u_1,u_2\cdots,u_N),v=(v_1,v_2,\cdots,v_N)\in\mathcal{U}_b^T$.  Next, we show that the mapping $\Psi_{\psi_0}=(\Psi_{\psi_0,1},\cdots,\Psi_{\psi_0,N}),$ defined by  \eqref{12}, takes $\mathcal{U}_b^T$ into itself for suitable choice of $b$ and small  $T>0$.  Let $\psi=(\psi_1,...,\psi_N)\in \mathcal{U}_b^T.$ 
Denote 
\begin{align}\label{f1}
J_{k,l,m}(t)=\int_0^tU_\alpha(t-s)\h_{a,\gamma}(\psi_k(s),\psi_l(s),\psi_m(s))ds.
\end{align}
Let $\frac{d}{d-\gamma}<\rho\leq2,$  $h_a(\xi)= \frac{c_da}{(a^2+ 4\pi^2|\xi|^2)^{(d+1)/2}}$ (see  \eqref{di} and Lemma \ref{1}) and  $K=\frac{e^{-a|\cdot|}}{|\cdot|^\gamma}$. We have 
\begin{align*}
\left\n\h_{a,\gamma}(f,g,h)\right\n_{\widehat{L}^p}&=\left\n\F\left[\left(K*(f\overline{g})\right)h\right]\right\n_{L^{p'}}\leq\left\n\F\left[K*(f\overline{g})\right]\right\n_{L^1}\left\n\F h\right\n_{L^{p'}}
=\left\n\F K\F(f\overline{g})\right\n_{L^1}\left\n h\right\n_{\widehat{L}^p}\\
& \leq (\left\n (k_1\ast h_a)\right\n_{L^1}\left\n\F(f\overline{g})\right\n_{L^\infty}+\left\n (k_2*h_a)\right\n_{L^\rho}\left\n\F(f\overline{g})\right\n_{L^{\rho'}}) \left\n h\right\n_{\widehat{L}^p}\\
&\lesssim (\left\n f\overline{g}\right\n_{L^1}+\left\n f\overline{g}\right\n_{L^\rho})  \left\n h\right\n_{\widehat{L}^p}
\leq (\left\n f\right\n_{L^2}\left\n g\right\n_{L^2}+\left\n f\right\n_{L^{2\rho}}\left\n f\right\n_{L^{2\rho}})  \left\n h\right\n_{\widehat{L}^p}.
\end{align*}
Choose  $q_2$  as
 $\frac{\alpha}{2q_2} =  d \big( \frac{1}{2} - \frac{1}{2\rho} \big)$
 so that $(2q_2,2\rho)$ is an $\alpha$-fractional admissible pair.
 Then 
\begin{align}\label{d1}
\left\n J_{k,l,m}(t)\right\n_{\widehat{L}^p}&\lesssim\int_0^t\big(\left\n\psi_k(s)\right\n_{L^2}\left\n\psi_l(s)\right\n_{L^2}+\left\n\psi_k(s)\right\n_{L^{2\rho}}\left\n\psi_l(s)\right\n_{L^{2\rho}}\big)\left\n\psi_m(s)\right\n_{\widehat{L}^p}ds\\
&\lesssim t\left\n\psi_k\right\n_{L_t^\infty(L^2)}\left\n\psi_l\right\n_{L_t^\infty(L^2)}\left\n\psi_m\right\n_{L_t^\infty(\widehat{L}^p)}+\left\n\psi_k\right\n_{L_t^{2q_2}\left(L^{2\rho}\right)}\left\n\psi_l\right\n_{L_t^{2q_2}\left(L^{2\rho}\right)}\left\n\psi_m\right\n_{L_t^{{q_2}'}\left(\widehat{L}^p\right)}\nonumber
\end{align}
using H\"older inequality.
Therefore
\begin{align}\label{12a}
\left\n\Psi_{\phi_0, k}(\psi)(t)\right\n_{\widehat{L}^p}\lesssim\left\n\psi_{0,k}\right\n_{\widehat{L}^p}+Nb^3(T+T^{\frac{1}{q_2'}}).
\end{align}  
For $(\underline{q},\underline{r})\in\{({q_1},r),(2q_2,2\rho),(\infty,2)\},$ by Proposition \ref{fst}  we have 
\begin{align*}
\n \Psi_{\psi_0,k}(\mathbf{\psi})(t)\n_{L^{\underline{q}}(L^{\underline{r}})}&\lesssim \n\psi_{0,k}\n_{L^2}+ \sum_{l=1}^N\n (K*|\psi_l|^2)\psi_k\n_{L_T^{{q_1}'}(L^{r'})}+ \sum_{l=1}^N\n (K*(\psi_k\overline{\psi_l})\psi_l\n_{L_T^{{q_1}'}(L^{r'})}.
\end{align*}
Now we have
 $\frac{1}{{q_1}'}=\frac{4s-\gamma}{4s}+\frac{1}{{q_1}},\ \frac{1}{r'}=\frac{\gamma}{2d}+\frac{1}{r}\ \text{and}\ \frac{4s-\gamma}{4s}=\frac{2}{{q_1}}+\frac{2s-\gamma}{2s}.$
By H\"older and Hardy-Littlewood-Sobolev inequalities, as in the calculation \eqref{b1} we have
\begin{align*}
\left\n (K*(\psi_k\overline{\psi_l})\psi_m\right\n_{L_T^{{q_1}'}(L^{r'})}
&\leq\left\n\left\n|\cdot|^{-\gamma}*|\psi_k\overline{\psi_l}|\right\n_{L^{\frac{2d}{\gamma}}}\right\n_{L_T^{\frac{4s}{4s-\gamma}}}\left\n\psi_m\right\n_{L_T^{q_1}(L^r)}\\
&\lesssim\left\n\left\n|\psi_k\overline{\psi_l}|\right\n_{L^{\frac{2d}{2d-\gamma}}}\right\n_{L_T^{\frac{4s}{4s-\gamma}}}\left\n\psi_m\right\n_{L_T^{q_1}(L^r)}\\
&\leq T^{1-\frac{\gamma}{2s}}\left\n\psi_k\right\n_{L_T^{q_1}(L^r)}\left\n\psi_l\right\n_{L_T^{q_1}(L^r)}\left\n\psi_m\right\n_{L_T^{q_1}(L^r)}.
\end{align*}
 Combining the above two inequalities, we obtain 
\begin{align*}
\n\Psi_{\psi_0,k}(\mathbf{\psi})(t)\n_{L^{\underline{q}}(L^{\underline{r}})}&\lesssim\n\psi_{0,k}\n_{L^2}+ T^{1-\frac{\gamma}{2s}}\sum_{l=1}^N\left\n\psi_l\right\n_{L_T^{q_1}(L^r)}^2\left\n\psi_k\right\n_{L_T^{q_1}(L^r)}\lesssim\n\psi_{0,k}\n_{L^2}+ T^{1-\frac{\gamma}{2s}}Nb^3.
\end{align*}
  Choose $b=2c\n\psi_0\n_{(L^2\cap\widehat{L}^p)^N}$ and $T>0$ small enough so that    \eqref{12a} and the above inequality 
  imply  $\Psi_{\psi_0} (\psi) \in \mathcal{U}_b^T$. On the other hand for $u,v\in\mathcal{V}_b^T,$ using trilinearity of $\h_{a,\gamma}$, we have 
\begin{align}\label{12b}
\n\Psi_{\psi_0, k}(u)(t)-\Psi_{\psi_0, k}(v)(t)\n_{\widehat{L}^p}&\leq\sum_{l=1}^N\int_0^t\left\n\h_{a,\gamma}(u_l,u_l,u_k)-\h_{a,\gamma}(v_l,v_l,v_k)\right\n_{\widehat{L}^p}\nonumber\\
&\ \ \ +\sum_{l=1}^N\int_0^t\left\n\h_{a,\gamma}(u_k,u_l,u_l)-\h_{a,\gamma}(v_k,v_l,v_l)\right\n_{\widehat{L}^p}\nonumber\\
&\lesssim N(T+T^{\frac{1}{q_2'}})b^2d(u,v).
\end{align}
On the other hand again by using Proposition \ref{fst} and trilinearity we obtain
\begin{align}\label{7d}
\n \Psi_{\psi_0,k}(u)(t)-\Psi_{\psi_0,k}(v)(t)\n_{L^{\underline{q}}(L^{\underline{r}})}\lesssim T^{1-\frac{\gamma}{2s}}Nb^2d(u,v).
\end{align}
Choose $T>0$ further small so that   \eqref{12b}  and \eqref{7d} imply that $\Psi_{\psi_0}$ is a contraction.\\
$\bullet$ \textbf{Case II:} $0<\gamma<2d(\frac{1}{2}-\frac{1}{p})$, $2<p\leq\infty.$

Let $X$ be as in Proposition \ref{lhe}.  For  $b,T>0,$ let
$U_b^T=\{v\in L_T^\infty(X):\n v\n_{L_T^\infty(X)}\leq b\}.$  We set $\mathcal{U}_b^T=(U_b^T)^N$ and the distance on it by
$d(u,v)=\max\big\{\n u_j-v_j\n_{L_T^\infty(X)}:j=1,2,\cdots,N\big\},$
where $u,v\in\mathcal{U}_b^T$.  Next, we show that the mapping $\Psi_{\psi_0},$ defined by  \eqref{12}, takes $\mathcal{U}_b^T$ into itself for suitable choice of $b$ and small  $T>0$.  Let $\psi=(\psi_1,...,\psi_N)\in \mathcal{U}_b^T.$ 
We note that 
\begin{align*}
\left\n\Psi_{\psi_0, k}(\psi)(t)\right\n_{X}&\lesssim\left\n\psi_{0,k}\right\n_{X}+\sum_{l=1}^N\int_0^t\left\n\h_{a,\gamma}(\psi_l(s),\psi_l(s),\psi_k(s))\right\n_{X}ds\\
&\ \ \ +\sum_{l=1}^N\int_0^t\left\n\h_{a,\gamma}(\psi_k(s),\psi_l(s),\psi_l(s))\right\n_{X}ds.
\end{align*}
Therefore taking $b=2\left\n\psi_0\right\n_{X^N}$ and using Proposition \ref{lhe} \eqref{lhec}, we have
\begin{align*}
\left\n\Psi_{\psi_0, k}(\psi)(t)\right\n_{X}&\leq \frac{b}{2}+c\sum_{l=1}^N\int_0^t\left\n\psi_l\right\n_{X}^2\left\n\psi_k\right\n_{X}\leq \frac{b}{2}+cNTb^3\leq b.
\end{align*}for $T>0$ small enough.
For $u,v\in\mathcal{U}_b^T,$ by tri-linearity of $\h_{a,\gamma}$ we have
\begin{align*}
\left\n\Psi_{\psi_0, k}(u)(t)-\Psi_{\psi_0, k}(v)(t)\right\n_{X}&\lesssim Nb^2T d(u,v).
\end{align*}
Thus  $\Psi: \mathcal{U}_b^T \to \mathcal{U}_b^T$ is a  contraction  provided $T>0$ is small enough.
\end{proof}

%

We introduce the function  space $\w{Z}_{q,\theta}^p(T) \ (1\leq p \leq \infty)$ which is similar to $\w{Y}_{q,\theta}^p(T)$ to get local well-possedness. 
Specifically, we define 
$$\w{W}_{q,\theta}^p(T)=\big\{v:[0,T]\times\rd\rightarrow\C\ :\ \n v\n_{\w{W}_{q,\theta}^p(T)}=\n t^{\theta}\n(\partial_tv)(t,\cdot)\n_{\widehat{L}^p}\n_{L_T^q}<\infty\big\}$$
and
$$\widetilde{Z}_{q,\theta}^p(T)=\big\{v\in\widetilde{W}_q^p(T): \n v\n_{\widetilde{Z}_{q,\theta}^p(T)}:=\n v(0)\n_{\widehat{L}^p}+\n v\n_{\widetilde{W}_{q,\theta}^p(T)} < \infty  \big\}.$$
Now we state the required inclusion result.
\begin{Lemma}\label{inclu2}
Let $T>0$, $p\geq1$ and $0<\gamma<1$. Then
$\widetilde{Z}_{\infty,\theta}^p(T)\hookrightarrow C([0, T],\widehat{L}^p(\R^d)).$
\end{Lemma}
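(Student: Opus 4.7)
The plan is to mimic directly the proof of Lemma \ref{inclu1}, working in the Fourier-Lebesgue setting rather than the Lebesgue one. Given $v\in \widetilde{Z}_{\infty,\theta}^p(T)$, the definition of the space says $v(0)\in \widehat{L}^p(\R^d)$ and
\[
\sup_{s\in[0,T]} s^{\theta}\,\n \partial_s v(s,\cdot)\n_{\widehat{L}^p} \;=:\; M < \infty,
\]
so the goal is to deduce that the map $t\mapsto v(t,\cdot)$ is continuous $[0,T]\to \widehat{L}^p(\R^d)$. Here $\theta$ plays the role $\gamma$ had in Lemma \ref{inclu1}; the hypothesis $0<\gamma<1$ (read as $0<\theta<1$) is exactly the one needed to render $s^{-\theta}$ integrable at the origin.

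First I would write, for $0\le t_1<t_2\le T$, the fundamental theorem of calculus in time,
\[
v(t_2) - v(t_1) = \int_{t_1}^{t_2} \partial_s v(s)\,ds,
\]
interpreted as a Bochner integral with values in $\widehat{L}^p(\R^d)$ (or equivalently, by applying $\F$, as an integral with values in $L^{p'}(\R^d)$, where it is unambiguous). Then Minkowski's integral inequality yields
\[
\n v(t_2) - v(t_1)\n_{\widehat{L}^p} \;\le\; \int_{t_1}^{t_2} \n \partial_s v(s)\n_{\widehat{L}^p}\,ds \;\le\; M\int_{t_1}^{t_2} s^{-\theta}\,ds \;=\; \frac{M}{1-\theta}\bigl(t_2^{1-\theta}-t_1^{1-\theta}\bigr),
\]
which tends to $0$ as $|t_2-t_1|\to 0$. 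For continuity at $t=0$ one specializes $t_1=0$ and obtains $\n v(t)-v(0)\n_{\widehat{L}^p}\le \tfrac{M}{1-\theta}\,t^{1-\theta}\to 0$; for $t_1>0$ uniform continuity on $[t_1,T]$ follows trivially since $s^{-\theta}$ is bounded there.

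The embedding is then continuous because the inequality
\[
\sup_{t\in[0,T]}\n v(t)\n_{\widehat{L}^p} \;\le\; \n v(0)\n_{\widehat{L}^p} + \frac{T^{1-\theta}}{1-\theta}\n v\n_{\w{W}_{\infty,\theta}^p(T)}
\]
follows from the same estimate (taking $t_1=0$, $t_2=t$). There is no real obstacle here: the one subtle point is ensuring that $\partial_s v$ is sufficiently regular as a $\widehat{L}^p$-valued function for the Bochner integral (equivalently, Minkowski) to apply, but this is already built into the definition of $\w{W}_{\infty,\theta}^p(T)$. The essential arithmetic ingredient is simply $\int_0^T s^{-\theta}\,ds <\infty$ for $\theta<1$, which is exactly the hypothesis of the lemma.
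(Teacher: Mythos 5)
Your proposal is correct and is exactly the argument the paper intends: the paper simply defers to \cite[Lemma 2.1]{hyakuna2019global}, whose proof is precisely this fundamental-theorem-of-calculus representation combined with the integrability of $s^{-\theta}$ on $[0,T]$ for $\theta<1$ (the hypothesis written as $0<\gamma<1$ in the statement). Your remark that the time-derivative's $\widehat{L}^p$-valued regularity is built into the definition of $\w{W}_{\infty,\theta}^p(T)$ is the only subtle point, and you handle it appropriately.
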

\begin{proof}[{\bf Proof}] The proof is similar to the proof of Lemma \ref{inclu1}. See \cite[Lemma 2.1]{hyakuna2019global} for details.
\end{proof}

\begin{Lemma}\label{10} Denote
\begin{align*}
\mathcal{D}_{a,\gamma}^\alpha(v_1,v_2,v_3)(t)=\int_0^tU_\alpha(-s)\h_{a,\gamma}(U_\alpha(s)v_1(s),U_\alpha(s)v_2(s),U_\alpha(s)v_3(s))ds.
\end{align*} 
\begin{enumerate}
\item\label{10a} Assume that $0<\gamma<\min\{\alpha,d(\frac{1}{p}-\frac{1}{2})\}$ when $1\leq p<2$ and $0<\gamma<\min\{\alpha,2d(\frac{1}{2}-\frac{1}{p})\}$ when $2<p\leq\infty$. Then 
$
\n \mathcal{D}_{a,\gamma}^\alpha(v_1,v_2,v_3)\n_{\widetilde{W}_{\alpha/\gamma,0}^2(T)\cap \w{W}_{\infty,0}^p}\lesssim\prod_{l=1}^3\n v_l\n_{\widetilde{Z}_{1,0}^p(T)\cap\widetilde{Z}_{1,0}^2(T)}.
$
\item\label{10b} Assume that  $2<p\leq\infty$ and $0<\gamma<2d(\frac{1}{2}-\frac{1}{p})$. Then 
$
\n \mathcal{D}_{a,\gamma}^\alpha(v_1,v_2,v_3)\n_{\widetilde{W}_{\infty,0}^2(T)\cap \w{W}_{\infty,0}^p}\lesssim\prod_{l=1}^3\n v_l\n_{\widetilde{Z}_{1,0}^p(T)\cap\widetilde{Z}_{1,0}^2(T)}.
$
\end{enumerate}
\end{Lemma}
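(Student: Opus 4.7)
The plan is to mimic the proof of Lemma \ref{8} in the fractional Fourier-Lebesgue setting, exploiting that $U_\alpha(-t)$ is an isometry on $\widehat{L}^p$ by Lemma \ref{4} and unitary on $L^2$ by Plancherel. Differentiating under the integral gives
\[
\partial_t \mathcal{D}_{a,\gamma}^\alpha(v_1,v_2,v_3)(t)=U_\alpha(-t)\,\h_{a,\gamma}\bigl(U_\alpha(t)v_1(t),U_\alpha(t)v_2(t),U_\alpha(t)v_3(t)\bigr),
\]
so every norm on the left-hand side reduces to estimating $\h_{a,\gamma}$ of the evolved inputs in either $\widehat{L}^p_x$ or $L^2_x$, possibly combined with a time integrability.

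For every piece except the $\widetilde{W}^2_{\alpha/\gamma,0}(T)$-bound in \eqref{10a}, I would argue pointwise in $t$. Proposition \ref{lhe}\eqref{lheb} (for $1\leq p<2$) and Proposition \ref{lhe}\eqref{lhec} with $X=L^2\cap\widehat{L}^p$ (for $2<p\leq\infty$) give in either case
\[
\bigl\|\h_{a,\gamma}\bigl(U_\alpha(t)v_1(t),U_\alpha(t)v_2(t),U_\alpha(t)v_3(t)\bigr)\bigr\|_{\widehat{L}^p\cap L^2}\lesssim\prod_{l=1}^3\n v_l(t)\n_{\widehat{L}^p\cap L^2},
\]
once the isometries of $U_\alpha$ are invoked on the right; the hypotheses on $\gamma$ in \eqref{10a}-\eqref{10b} are precisely those required by Proposition \ref{lhe}. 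Writing $v_l(t)=v_l(0)+\int_0^t\partial_sv_l(s)\,ds$ and using the triangle inequality in $\widehat{L}^p\cap L^2$ produces $\sup_{[0,T]}\n v_l(t)\n_{\widehat{L}^p\cap L^2}\leq\n v_l\n_{\widetilde{Z}_{1,0}^p(T)\cap\widetilde{Z}_{1,0}^2(T)}$, and taking $\sup_t$ closes both the $\widetilde{W}^p_{\infty,0}$-estimates of \eqref{10a}-\eqref{10b} and the $\widetilde{W}^2_{\infty,0}$-estimate of \eqref{10b}.

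The remaining $\widetilde{W}^2_{\alpha/\gamma,0}(T)$-bound in \eqref{10a} is the direct analogue of Lemma \ref{8}\eqref{8a}, and is where the real work happens. Using $e^{-a|\cdot|}|\cdot|^{-\gamma}\leq|\cdot|^{-\gamma}$, H\"older with exponents $\bigl(\tfrac{3d}{\gamma},\tfrac{6d}{3d-2\gamma}\bigr)$ and the Hardy-Littlewood-Sobolev inequality for the Riesz potential yield the pointwise bound
\[
\bigl\|\h_{a,\gamma}(f_1,f_2,f_3)\bigr\|_{L^2}\lesssim\prod_{l=1}^3\n f_l\n_{L^r},\qquad r=\tfrac{6d}{3d-2\gamma}.
\]
A direct check shows $\bigl(\tfrac{3\alpha}{\gamma},r\bigr)$ is $\alpha$-fractional admissible. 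Taking the $L^{\alpha/\gamma}_T$-norm of the pointwise bound and distributing via H\"older in time into three factors in $L^{3\alpha/\gamma}_T(L^r)$ reduces matters to showing $\n U_\alpha(t)v_l(t)\n_{L^{3\alpha/\gamma}_T(L^r)}\lesssim\n v_l\n_{\widetilde{Z}_{1,0}^2(T)}$. This is exactly the step carried out at the end of the proof of Lemma \ref{8}\eqref{8a}: expand $U_\alpha(t)v_l(t)=U_\alpha(t)v_l(0)+\int_0^tU_\alpha(t)\partial_sv_l(s)\,ds$, move the space-time norm inside the $s$-integral by Minkowski, and apply the Strichartz estimate of Proposition \ref{fst} separately to $U_\alpha(\cdot)v_l(0)$ and to each $U_\alpha(\cdot)\partial_sv_l(s)$.

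The main obstacle is this last paragraph: identifying the admissible pair $(3\alpha/\gamma,6d/(3d-2\gamma))$ that simultaneously closes the H\"older/HLS chain and the Strichartz step. The hypothesis $\gamma<\alpha$ forces $q=3\alpha/\gamma>3$, and the use of Proposition \ref{fst}\eqref{fst2} when $\alpha<2$ is why the radial assumption on initial data is essential in the companion results (Theorems \ref{lhw} and \ref{global2}) that invoke this lemma.
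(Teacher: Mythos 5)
Your proposal is correct and follows essentially the same route as the paper: differentiate under the integral, use Lemma \ref{4} (and unitarity on $L^2$) to strip $U_\alpha(-t)$, treat the $\widetilde{W}^p_{\infty,0}$ (and, in \eqref{10b}, the $\widetilde{W}^2_{\infty,0}$) pieces pointwise in $t$ via Proposition \ref{lhe} together with the fundamental-theorem-of-calculus bound $\sup_t\n v_l(t)\n\leq\n v_l\n_{\widetilde{Z}_{1,0}}$, and handle the $\widetilde{W}^2_{\alpha/\gamma,0}$ piece by the H\"older--Hardy-Littlewood-Sobolev chain of Lemma \ref{8}\eqref{8a} with the $\alpha$-fractional admissible pair $\bigl(3\alpha/\gamma,\tfrac{6d}{3d-2\gamma}\bigr)$, Minkowski, and Proposition \ref{fst}. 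The only slight overstatement is claiming Proposition \ref{lhe}\eqref{lheb} yields the full $\widehat{L}^p\cap L^2$ trilinear bound for $1\leq p<2$ (it gives only the $\widehat{L}^p$ component), but since only that component is needed for $\widetilde{W}^p_{\infty,0}$ in \eqref{10a}, the argument is unaffected.
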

\begin{proof}[{\bf Proof}] 
\eqref{10a} Set $q=\frac{\alpha}{\gamma},r=\frac{6d}{3d-2\gamma}$ so that $(3q,r)$ becomes an $\alpha$-fractional admissible pair.  By a similar argument as in the proof of Lemma \ref{8} \eqref{8a}, we obtain
\begin{align*}
\n\partial_t\mathcal{D}_{a,\gamma}^\alpha(v_1,v_2,v_3)\n_{L_T^q(L^2)}&\asymp\n U_\alpha(-t)\h_{a,\gamma}(U_\alpha(t)v_1(t),U_\alpha(t)v_2(t),U_\alpha(t)v_3(t))\n_{L_T^q(L^2)}\\
&=\n\h_{a,\gamma}(U_\alpha(t)v_1(t),U_\alpha(t)v_2(t),U_\alpha(t)v_3(t))\n_{L_T^q(L^2)}\\
&\lesssim\prod_{l=1}^3\n U(t)v_l\n_{L_T^{3q}(L^{\frac{6d}{3d-\gamma}})}\lesssim\prod_{l=1}^3\n v_l\n_{\w{Z}_{1,0}^2(T)}.
\end{align*}
 In view of Proposition \ref{lhe} and Lemma \ref{4}, we obtain\begin{align*}
\n\mathcal{D}_{a,\gamma}^\alpha(v_1,v_2,v_3)\n_{\w{W}_{\infty,0}^p(T)}&=\sup_{t\in[0,T]}\n\partial_t\mathcal{D}_{a,\gamma}^\alpha(v_1,v_2,v_3)(t)\n_{\widehat{L}^p}\\
&\asymp\sup_{t\in[0,T]}\n\h_{a,\gamma}(U_\alpha(t)v_1(t),U_\alpha(t)v_2(t),U_\alpha(t)v_3(t))\n_{\widehat{L}^p}\\
&\lesssim\sup_{t\in[0,T]}\prod_{l=1}^3\n v_l(t)\n_{L^2\cap\widehat{L}^p}\lesssim\prod_{l=1}^3\n v_l\n_{\w{Z}_{1,0}^2(T)\cap\w{Z}_{1,0}^p(T)}.
\end{align*}
\eqref{10b} Here it remains to estimate the $\widetilde{W}_{\infty,0}^2(T)$-semi norm which follows in a similar way as the $\widetilde{W}_{\infty,0}^p(T)$ estimate above.
\end{proof}

\begin{proof}[{\bf Second proof of Theorem \ref{lhw}}] 
For $\alpha\in(\frac{2d}{2d-1},2),$ we assume $d\geq2$ and initial data to be radial. \\
$\bullet$ \textbf{Case I:} $0<\gamma<\min\{\alpha,d(\frac{1}{p}-\frac{1}{2})\}$ when $1\leq p<2$ and $0<\gamma<\min\{\alpha,2d(\frac{1}{2}-\frac{1}{p})\}$ when $2<p\leq\infty$. 
 
Applying $U_\alpha(-t)$ to the Duhamel's formula, we rewrite  \eqref{hf}  as
\begin{align}\label{f2}
\phi_k(t)&=\psi_{0,k}+i\int_0^t U_\alpha(-s)(H_\psi\psi_k)(s)ds-i\int_0^t U_\alpha(-s)(F_\psi\psi_k)(s)ds=:\Phi_{\psi_0,k}(\phi).
\end{align}
For $b>b, T>0,$  let 
$V_b^T(v_0)=\big\{v\in\widetilde{Z}_{\alpha/\gamma,0}^2(T)\cap\widetilde{Z}_{\infty,0}^p(T)\ : \ \n v\n_{\widetilde{W}_{\alpha/\gamma,0}^2(T)\cap\widetilde{W}_{\infty,0}^p(T)}\leq b,\ v(0)=v_0\big\}.$   We set
$\mathcal{V}_b^T(\psi_0)=V_b^T(\psi_{0,1})\times V_b^T(\psi_{0,2})\times\cdots\times V_b^T(\psi_{0,N}),$
and  the distance on it by 
$d(u,v)=\max\big\{\n u_j-v_j\n_{\widetilde{W}_{\alpha/\gamma,0}^2(T)\cap\widetilde{W}_{\infty,0}^p(T)}:\ j=1,2,\cdots,N\big\}.$
Next, we show that the mapping  $\Phi_{\psi_0}$  defined by \eqref{f2} takes  $\mathcal{V}_b^T(\psi_0)$ into itself for suitable choice of  $b>0$ and small  $T>0.$ In fact,  taking $0<T<1$ and as the terms with integral sign in \eqref{f2} is combination of $\mathcal{D}_{a,\gamma}^\alpha(\phi_k,\phi_l,\phi_m)$'s, by  Lemma \ref{10}, we obtain
\begin{align*}
\n\Phi_{\psi_0,k}(\phi)\n_{\widetilde{W}_{\alpha/\gamma,0}^2(T)\cap\widetilde{W}_{\infty,0}^p(T)}
&\lesssim\n \phi_k\n_{\widetilde{Z}_{1,0}^2(T)\cap\widetilde{Z}_{1,0}^p(T)}\sum_{l=1}^N\n \phi_l\n_{\widetilde{Z}_{1,0}^2(T)\cap\widetilde{Z}_{1,0}^p(T)}^2.
\end{align*}
By H\"older inequality, we have $\n v_l\n_{\widetilde{W}_{1,0}^2(T)}\leq T^{1-\frac{\gamma}{\alpha}}\n v_l\n_{\widetilde{W}_{\alpha/\gamma,0}^2(T)}$ and $\n v_l\n_{\widetilde{W}_{1,0}^p(T)}\leq T\n v_l\n_{\widetilde{W}_{\infty,0}^p(T)}$. Therefore, for $0<T<1,$ we have 
\begin{equation}\label{11}
\n v_l\n_{\widetilde{Z}_{1,0}^p(T)\cap\widetilde{Z}_{1,0}^2(T)}\lesssim \n v_l(0)\n_{\widehat{L}^p\cap L^2}+T^{1-\frac{\gamma}{\alpha}}\n v_l\n_{\widetilde{W}_{\alpha/\gamma,0}^2(T)\cap\widetilde{W}_{\infty,0}^p(T)}.
\end{equation}
Hence
\begin{align*}
\n\Phi_{\psi_0,k}(\phi)\n_{\widetilde{W}_{\alpha/\gamma,0}^2(T)\cap\widetilde{W}_{\infty,0}^p(T)}&\lesssim N\n\psi_0\n_{L^2\cap \widehat{L}^p}^3+T^{3(1-\frac{\gamma}{\alpha})}\sum_{l=1}^N\n\phi_l\n_{\widetilde{W}_{\alpha/\gamma,0}^2(T)\cap\widetilde{W}_{\infty,0}^p(T)}^3.
\end{align*}
Set $b=2cN\n\psi_0\n_{(L^p\cap L^2)^N}^3$ then choose $T>0$ small enough to get $\n\Phi_{\psi_0,k}(\phi)\n_{\widetilde{W}_{2/\gamma,0}^2(T)\cap\widetilde{W}_{\infty,\gamma}^p(T)}\leq b.$ It follows that   $\Phi (\phi) \in \mathcal{V}_b^T(\psi_0).$
 For $u,v\in\mathcal{V}_b^T(\psi_0)$
 by trilinearity, Lemma \ref{10} and \eqref{11}
\begin{align*}
d\left(\Phi_{\psi_0,k}(u),\Phi_{\psi_0,k}(v)\right)&\lesssim T^{1-\frac{\gamma}{\alpha}}\left(\n\psi_0\n_{\widehat{L}^p\cap L^2}^2+T^{2(1-\frac{\gamma}{\alpha})}b^2\right)\sum_{l=1}^N\n u_l-v_l\n_{\widetilde{W}_{2/\gamma,0}^2(T)\cap\widetilde{W}_{\infty,0}^p(T)}\\
&\lesssim T^{1-\frac{\gamma}{\alpha}}N\left(\n\psi_0\n_{\widehat{L}^p\cap L^2}^2+T^{2(1-\frac{\gamma}{\alpha})}b^2\right)d\left(u,v\right).
\end{align*}
Then $\Phi_{\psi_0}: \mathcal{V}_b^T(\psi_0) \to \mathcal{V}_b^T(\psi_0)$ is a contraction for further small enough $T>0$ if needed.\\
$\bullet$  \textbf{Case II:}  $0<\gamma<2d(\frac{1}{2}-\frac{1}{p})$ with $2<p\leq\infty.$

Lemma \ref{10} \eqref{10b} and similar argument as in Case I give the solution $\phi\in(\widetilde{Z}_{\infty,0}^2(T)\cap\widetilde{Z}_{\infty,0}^p(T))^N$ to \eqref{f2}.  
\end{proof}

\begin{Remark}
Note that the second proof gives the existence of solution in Zhou spaces.  So it adds Zhou space regularity to the solutions given by the first proof. For $0<\gamma<1$ such solutions are in $C([0, T],\widehat{L}^p(\R^d))$ (see Lemma \ref{inclu2}).
\end{Remark}


\subsection{Global well-posedness in $L^p \cap L^2$}\label{pglobal1}


 We extend the local solution established in  Theorem \ref{lw}  globally. Let $\phi=(\phi_1,\phi_2,\cdots,\phi_N)$ be the local solution (given by Theorem \ref{lw})  to \eqref{t5} which is in $\left(C([0,T],L^p(\rd)\cap L^2(\rd))\right)^N$ for any $0<T<T_0$.  We start with the following lemma.

\begin{Lemma}\label{ul}
On the time interval $[0,T_0)$,  the local solution (given by Theorem \ref{lw}) $\psi(t)=(U(t)\phi_1(t),\cdots,U(t)\phi_N(t))$ coincides with the global $L^2$-solution for the initial datum $\psi_0=\psi(0)$ given by Proposition \ref{miF}.
\end{Lemma}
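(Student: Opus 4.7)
The strategy is to establish uniqueness in a common Strichartz class that contains both solutions, and then conclude equality by bootstrapping. Set $(q_1,r)=(8/\gamma,4d/(2d-\gamma))$, which is 2-admissible and is exactly the pair appearing in Proposition \ref{miF}. Let $\tilde\psi=(\tilde\psi_1,\dots,\tilde\psi_N)$ denote the global $L^2$-solution given by Proposition \ref{miF} with data $\psi_0$, so that $\tilde\psi\in (C(\R,L^2)\cap L^{q_1}_{loc}(\R,L^r))^N$. Both $\psi$ and $\tilde\psi$ satisfy the Duhamel formula \eqref{5} with the same initial data, so it suffices to show they agree in $\mathcal{C}_T:=(L^\infty_T L^2\cap L^{q_1}_T L^r)^N$ for every $T<T_0$.

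The first step is to verify $\psi\in \mathcal{C}_T$ for $T<T_0$. In Case I of Theorem \ref{lw} this is built into the contraction: by definition of $\mathcal{V}^T_b$ one has $\n U(t)\phi_k\n_{L^{q_1}_T L^r}=\n\psi_k\n_{L^{q_1}_T L^r}\leq b$, while mass conservation (Proposition \ref{miF}) gives the $L^\infty_T L^2$ bound. In Case II the fixed point is built only in $L^\infty_T(L^p\cap L^2)$, so one must upgrade a posteriori: apply the inhomogeneous Strichartz estimate (Proposition \ref{fst}(1)) to \eqref{5} together with the H\"older+Hardy-Littlewood-Sobolev chain \eqref{b1} to propagate the $L^{q_1}_T L^r$ regularity off the linear term $U(t)\psi_{0,k}\in L^{q_1}_T L^r$ on a sufficiently short subinterval, then iterate.

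The second step is the difference estimate. Set $w=\psi-\tilde\psi$. Subtracting the two Duhamel equations, using trilinearity of $\h_{a,\gamma}$, the Strichartz estimate applied at the admissible pair $(q_1,r)$, and the same H\"older+Hardy-Littlewood-Sobolev calculation carried out in \eqref{b1}, one gets
\begin{equation*}
\n w\n_{\mathcal{C}_T}\leq C\, T^{1-\gamma/2}\!\left(\n\psi\n_{L^{q_1}_T L^r}^2+\n\tilde\psi\n_{L^{q_1}_T L^r}^2\right)\n w\n_{\mathcal{C}_T}.
\end{equation*}
Since $\gamma<1$ and both $\psi,\tilde\psi\in L^{q_1}_{loc}L^r$, the prefactor is strictly less than $1$ once $T$ is small enough; hence $w\equiv0$ on $[0,T]$ for such $T$. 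A standard continuation argument then extends the coincidence to all of $[0,T_0)$: if $t^\star:=\sup\{t\in[0,T_0):\psi\equiv\tilde\psi\text{ on }[0,t]\}$ were strictly less than $T_0$, continuity of both solutions with values in $L^2$ plus the absolute continuity of $t\mapsto\n\psi\n_{L^{q_1}_{(t^\star,t^\star+\tau)}L^r}+\n\tilde\psi\n_{L^{q_1}_{(t^\star,t^\star+\tau)}L^r}$ would let us rerun the difference estimate starting at $t^\star$ and reach a contradiction.

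The main obstacle will be the Case II verification that the constructed local solution actually sits in the Strichartz space $L^{q_1}_T L^r$, which is not explicit in the contraction ball. Once that is in place, the uniqueness argument is a routine Strichartz--Gronwall adaptation of the contraction step that already proves Theorem \ref{lw}, so no new estimates beyond \eqref{b1} and Proposition \ref{fst} are required.
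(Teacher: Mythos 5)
Your argument is correct, and its core is the same as the paper's: identify a common class, namely $\left(C([0,T],L^2)\cap L^{q_1}_T(L^r)\right)^N$ with $(q_1,r)=(8/\gamma,4d/(2d-\gamma))$, in which both the local solution of Theorem \ref{lw} and the global $L^2$-solution of Proposition \ref{miF} live, and conclude by uniqueness in that class. The difference is one of execution: the paper disposes of the lemma in one line by observing that the Case I metric space $\mathcal{V}_b^T$ already encodes the Strichartz bounds $\n U(t)\phi_k\n_{L_T^{q_1}(L^r)}\leq b$, so the local solution sits in the uniqueness class of the $L^2$-theory quoted in Proposition \ref{miF}, whereas you re-prove that uniqueness from scratch via a difference estimate (trilinearity, Strichartz at $(q_1,r)$ and $(\infty,2)$, the H\"older/Hardy--Littlewood--Sobolev chain of \eqref{b1}, the factor $T^{1-\gamma/2}$, and a continuation argument). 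Your self-contained route buys independence from the uniqueness statement imported in Proposition \ref{miF}, at the cost of repeating estimates already carried out in the contraction step. Note also that the ``main obstacle'' you flag, upgrading the Case II solution to $L^{q_1}_T(L^r)$, does not arise in the setting where Lemma \ref{ul} is used: the lemma feeds into Theorem \ref{global1} and Proposition \ref{ga}, where $0<\gamma<\min\{1,d/2\}$, so Case I of the first proof of Theorem \ref{lw} applies for every $p\in[1,\infty]$ and the Strichartz regularity is built into the fixed-point ball (your proposed a posteriori bootstrap is plausible but unnecessary). Finally, the $L^\infty_T L^2$ bound on $\psi$ needs no appeal to mass conservation: $\n\psi_k(t)\n_{L^2}=\n\phi_k(t)\n_{L^2}$ since $U(t)$ is an $L^2$-isometry, and this is already controlled in $\mathcal{V}_b^T$.
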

\begin{proof}[{\bf Proof}]  The assertion 
follows from uniqueness of local solution given by Theorem \ref{lw} (particularly see the metric space defined in the Case I of the First  proof of Theorem \ref{lw} in Subsection \ref{plw}) and Proposition \ref{miF}.
\end{proof}

\begin{proposition}\label{ga}
Assume $0<\gamma<d/2$. Let $T_0>0$ be such that for any $0<T<T_0$ the local solution $\phi$ of \eqref{6} exists in $C([0,T],L^p(\rd)\cap L^2(\rd))^N$. Then 
$
\sup_{t\in[0,T_0)}\n \phi(t)\n_{(L^p)^N}<\infty.
$
\end{proposition}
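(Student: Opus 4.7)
The plan is to close a Gronwall estimate for $\|\phi(t)\|_{(L^p)^N}$ by pushing the $L^2$-Strichartz theory of Proposition \ref{miF} through the trilinear bounds of Subsection \ref{TE}. By Lemma \ref{ul}, on $[0, T_0)$ the function $\psi(t) := (U(t)\phi_1(t), \ldots, U(t)\phi_N(t))$ coincides with the unique global $L^2$-solution furnished by Proposition \ref{miF}; in particular mass is conserved, so $\|\phi_k(t)\|_{L^2} = \|\psi_{0,k}\|_{L^2} \leq M$ for all $t$, and for every $2$-admissible pair $(q, r)$ one has $\psi \in L^q([0, T], L^r(\rd))$ for each compact $[0, T] \subset [0, T_0)$.

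Starting from the integral identity \eqref{6} and using $\|M_s^{-1} u\|_{L^p} = \|u\|_{L^p}$, I would decompose $\widehat{\h}_{a, \gamma, s} = \widehat{\h}_{a,\gamma,s}^1 + \widehat{\h}_{a,\gamma,s}^2$ as in \eqref{dtf} and estimate each piece separately. For the first piece, Remark \ref{le'} (valid for all $p$ and $0 < \gamma < d$) gives
\[
\|\widehat{\h}_{a,\gamma,s}^1(M_s\phi_\alpha, RM_s\phi_\beta, M_s\phi_\delta)\|_{L^p} \lesssim \|\phi_\alpha(s)\|_{L^2}\|\phi_\beta(s)\|_{L^2}\|\phi_\delta(s)\|_{L^p} \leq M^2 \|\phi_\delta(s)\|_{L^p}.
\]
For the second, I would choose $\rho$ with $d/(d-\gamma) < \rho < 2$ (possible precisely because $\gamma < d/2$). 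Young's inequality, Lemma \ref{3}, Hausdorff-Young ($\rho \leq 2$), and Lemma \ref{13} chained together yield
\[
\|\widehat{\h}_{a,\gamma,s}^2(M_s\phi_\alpha, RM_s\phi_\beta, M_s\phi_\delta)\|_{L^p} \lesssim s^{d(1 - 1/\rho)} \|\psi_\alpha(s)\|_{L^{2\rho}} \|\psi_\beta(s)\|_{L^{2\rho}} \|\phi_\delta(s)\|_{L^p},
\]
exactly in the spirit of the estimate \eqref{c2} from the first proof of Theorem \ref{lw}.

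Summing the Hartree contribution ($\alpha = \beta = l$, $\delta = k$) and the Fock contribution ($\alpha = k$, $\beta = \delta = l$) over $l = 1, \ldots, N$, and setting $F(t) = \max_k \|\phi_k(t)\|_{L^p}$ together with $A(s) = \sum_l \|\psi_l(s)\|_{L^{2\rho}}^2$, \eqref{6} produces
\[
F(t) \leq \|\psi_0\|_{(L^p \cap L^2)^N} + C \int_0^t \bigl( M^2 s^{-\gamma} + s^{d(1 - 1/\rho) - \gamma} A(s) \bigr) F(s) \, ds.
\]
Because $\gamma < 1$, the weight $s^{-\gamma}$ is integrable at $0$; the exponent $d(1 - 1/\rho) - \gamma$ is strictly positive by the choice of $\rho$, so $s^{d(1-1/\rho) - \gamma}$ is bounded on compact subintervals; and Proposition \ref{miF} applied with the $2$-admissible pair $(2q_2, 2\rho)$ (where $1/q_2 = d(1 - 1/\rho)/2$) gives $A \in L^{q_2}_{loc}([0, \infty)) \subset L^1_{loc}([0, \infty))$. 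Hence the coefficient of $F(s)$ lies in $L^1([0, T_0])$ for every finite $T_0$, and the classical Gronwall inequality yields $\sup_{[0, T_0)} F(t) < \infty$.

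The main obstacle is reconciling the three different time-weights entering the estimate: the time-singular $s^{-\gamma}$ produced by the factorization Lemma \ref{2}, the time-growing $s^{d(1 - 1/\rho)}$ produced by Lemma \ref{13}, and the Strichartz integrability of $\psi$ in $L^{2\rho}$. The hypothesis $\gamma < d/2$ is exactly what is needed to select $\rho \in (d/(d-\gamma), 2)$ so that all three ingredients combine into a locally integrable multiplier amenable to Gronwall.
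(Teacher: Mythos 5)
Your proposal is correct and follows essentially the same route as the paper: the same decomposition $\widehat{\h}_{a,\gamma,s}=\widehat{\h}^1_{a,\gamma,s}+\widehat{\h}^2_{a,\gamma,s}$, the bound of Remark \ref{le'} plus mass conservation for the first piece, the Young/H\"older/Hausdorff--Young/Lemma \ref{13} chain (i.e.\ the estimate \eqref{c2}) combined with the Strichartz control of $\psi$ in $L^{q_2}_{loc}(L^{2\rho})$ supplied by Lemma \ref{ul} and Proposition \ref{miF} for the second, and Gronwall to conclude. The only difference is the final bookkeeping: the paper applies H\"older in time and then raises the resulting inequality to the power $q=\max\{q_3,q_4\}$ before invoking Gronwall, whereas you apply the integral form of Gronwall directly with the locally integrable coefficient $M^2 s^{-\gamma}+s^{d(1-1/\rho)-\gamma}A(s)$ --- a cosmetic variant (both arguments use $\gamma<1$, which is guaranteed in the regime where the local solution of Theorem \ref{lw} exists, and both implicitly require $\rho$ chosen close enough to $d/(d-\gamma)$ so that the pair paired with $2\rho$ is $2$-admissible).
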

\begin{proof}[{\bf Proof}] 
We fix $T\in (0, T_0)$ and $t\in [0, T].$
Taking  \eqref{6} into account,  we obtain 
\begin{equation}\label{16}
\n\phi_k(t)\n_{L^p}\lesssim\n\psi_{0,k}\n_{L^p}+\sum_{l=1}^{N}\sum_{j=1}^2\n I_{l,l,k}^j(t)\n_{L^p}+\sum_{l=1}^{N}\sum_{j=1}^2\n I_{k,l,l}^j(t)\n_{L^p},
\end{equation}
where   $I_{k,l, k}^{j}$  are given by \eqref{inl}. Since the solution of (5.16) in $C([0,T],L^p(\rd)\cap L^2(\rd))^N$ for all $0<T<T_0$, we have a conservation of $L^2$-norm i.e. $\|\phi_k(t)\|_{L^2}=\|\phi_k(0)\|_{L^2}$.
Using Proposition \ref{le}\eqref{lea}, we have 
\begin{align}\label{17}
\n I_{k,l,m}^1(t)\n_{L^p}&\lesssim\int_0^ts^{-\gamma}\n\phi_k(s)\n_{L^2}\n\phi_l(s)\n_{L^2}\n\phi_m(s)\n_{L^p}ds\nonumber\\
&\lesssim T_0^{1-\gamma-1/q_4}\n\phi_k\n_{L_{T_0}^\infty(L^2)}\n\phi_l\n_{L_{T_0}^\infty( L^2)}\n\phi_m\n_{L_t^{q_4}(L^p)},
\end{align}
where $q_4$ is chosen so that $q_4>\frac{1}{1-\gamma}$.  Let $q_2$ and $\rho$  be  given as  in  \eqref{gr1} and \eqref{gr2} respectively.  By  \eqref{c2}, we have 
\begin{align*}
\n I_{k,l,m}^2(t)\n_{L^p} &\lesssim T_0^{d-\gamma-d/\rho}\n\psi_k\n_{L_{T_0}^{q_2}(L^{2\rho})}\n\psi_l\n_{L_{T_0}^{q_2}(L^{2\rho})}\n\phi_m\n_{L_t^{q_3}(L^p)}.
\end{align*}
 Note that $d-\gamma-\frac{d}{\rho}>0$. 
 Since $(q_2,2\rho)$ is an admissible pair,  in view of Lemma \ref{ul} and Proposition \ref{miF}, we have $\n\psi_j\n_{L_{T_0}^{q_2}(L^{2\rho})}<\infty$.  It follows that 
\begin{equation}\label{18}
\n I_{k,l,m}^2(t)\n_{L^p}\leq C_{T_0}\n\phi_m\n_{L_t^{q_3}(L^p)}.
\end{equation}
Thus we have from \eqref{16}, \eqref{17} and \eqref{18} that
\begin{equation}
\n\phi(t)\n_{(L^p)^N}\lesssim C_{\psi_0,T_0}+NC_{\psi_0,T_0}\n\phi\n_{L_t^{q}((L^p)^N)},
\end{equation}
where $q=\max\{q_3,q_4\}$. Therefore 
$$\n\phi(t)\n_{(L^p)^N}^q\lesssim C_{\psi_0,T_0}^q+N^qC_{\psi_0,T_0}^q\int_0^t\n\phi(t)\n_{(L^p)^N}^q dt.$$
By  Gronwall's lemma 
$\n\phi(t)\n_{(L^p)^N}^q\lesssim C_{\psi_0,T_0}^q\big(1+N^qC_{\psi_0,T_0}^qte^{C_{T_0,N,q}t}\big)$ which is desired.
\end{proof}

Let  $\psi(t)=(\psi_1(t),\cdots,\psi_N(t))$ be  a global  $L^2$-solution given by Proposition \ref{miF}.
We define $$T_+(\psi_0)=\sup\left\{T>0:U(-t)\psi(t)|_{[0,T]\times\rd}\in C\big([0,T],L^p(\rd)\big)^N\right\}$$ where $U(-t)\psi(t)=(U(-t)\psi_1(t),\cdots,U(-t)\psi_N(t))$.
By Theorem \ref{lw}, we have $T_+(\psi_0)>0$.

\begin{proposition}\label{gb}
 Assume $T_+(\psi_0)<\infty$. Then 
$\lim_{t\nearrow T_+(\psi_0)}\n U(-t)\psi(t)\n_{(L^p)^N}=\infty.$
\end{proposition}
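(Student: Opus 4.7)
The plan is to establish the contrapositive: if $\liminf_{t \nearrow T_+(\psi_0)} \|U(-t)\psi(t)\|_{(L^p)^N}$ is finite, then the local solution can be extended beyond $T_+(\psi_0)$, contradicting its definition. Set $T_+ := T_+(\psi_0)$ and assume for contradiction that $T_+ < \infty$ but $\liminf_{t \nearrow T_+} \|U(-t)\psi(t)\|_{(L^p)^N} \leq M < \infty$. Pick $t_* \in (T_+/2, T_+)$ with $\|U(-t_*)\psi(t_*)\|_{(L^p)^N} \leq 2M$; unitarity of $U(-t_*)$ on $L^2$ combined with the mass conservation in Proposition \ref{miF} gives $\|U(-t_*)\psi(t_*)\|_{(L^2)^N} = \|\psi_0\|_{(L^2)^N}$, so $\phi_* := U(-t_*)\psi(t_*) \in (L^p \cap L^2)^N$ with norm bounded \emph{uniformly} in $t_*$.

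Next I would restart the Cauchy problem from time $t_*$ by rerunning the Banach contraction argument from the first proof of Theorem \ref{lw}, but for the shifted integral equation: the analog of \eqref{6} with $\psi_{0,k}$ replaced by $\phi_{*,k}$ and each $\int_0^t$ replaced by $\int_{t_*}^t$. All estimates driving that contraction depend on two time factors, namely the weighted integral $\int_{t_*}^{t_*+\tau} s^{-\gamma}\,ds$ coming from the factorization Lemma \ref{2}, and the Strichartz factor $\tau^{1-\gamma/2}$ from \eqref{a2'}. Since $t_* \geq T_+/2 > 0$, one has $\int_{t_*}^{t_*+\tau} s^{-\gamma}\,ds \leq (T_+/2)^{-\gamma}\,\tau$, which can be made arbitrarily small by choosing $\tau$ small, with a bound \emph{independent of $t_*$}. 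Consequently, the contraction closes with a lifespan $\tau > 0$ depending only on $M$, $\|\psi_0\|_{L^2}$, $T_+$, $N$, $\gamma$, and $d$, producing a unique $\tilde\phi \in \bigl(C([t_*, t_* + \tau], L^p \cap L^2)\bigr)^N$ solving the shifted equation.

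Now I would choose $t_*$ close enough to $T_+$ that $t_* + \tau > T_+$. By the uniqueness of the global $L^2$-solution (Proposition \ref{miF}), $U(\cdot)\tilde\phi(\cdot)$ coincides with $\psi(\cdot)$ on $[t_*, T_+)$. Gluing $\tilde\phi$ with the original $\phi$ on $[0, t_*]$ then yields an extension of $U(-t)\psi(t)$ to an element of $\bigl(C([0, t_* + \tau], L^p \cap L^2)\bigr)^N$, contradicting the maximality of $T_+(\psi_0)$ in its defining supremum.

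The main obstacle is the second step: ensuring that the contraction lifespan $\tau$ does \emph{not} collapse to $0$ as $t_* \nearrow T_+$. This is precisely where the assumption $T_+ < \infty$, and therefore $t_*$ bounded away from the temporal singularity at $s=0$, is indispensable. The factor $s^{-\gamma}$ in the factorized integral equation is the only source of degeneracy, and its tameness away from $s = 0$ is what allows $\tau$ to be chosen uniformly in $t_*$.
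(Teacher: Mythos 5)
Your proposal is correct and is essentially the argument the paper has in mind: the paper omits the details, noting only that the local existence time of Theorem \ref{lw} depends solely on $\n\psi_0\n_{(L^2\cap L^p)^N},\gamma,d,N$ and citing the standard continuation argument (as in Hyakuna's Lemma 5.4), which is precisely the contradiction-and-restart scheme you carry out. Your key observation — that restarting at $t_*\geq T_+/2>0$ keeps the $s^{-\gamma}$ factor from Lemma \ref{2} (and the other time weights) under control, so the contraction lifespan is uniform in $t_*$ — is exactly the point that makes the standard argument go through here.
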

\begin{proof}[{\bf Proof}]  We point out that the assertion relies on the fact that the  local existence time $T$,  from Theorem \ref{lw}, depends only on $\n\psi_0\n_{(L^2\cap L^p)^N},\gamma,d,N$.    Now the proof  is standard, see e.g,  \cite[Lemma 5.4]{hyakuna2018global} for the Hartree equation,  and so we omit the details.   
\end{proof}

\begin{proof}[{\bf Proof of Theorem \ref{global1}}]
It is enough to prove that $T_+(\psi_0)=\infty$. If not, Proposition \ref{gb} implies 
$\lim_{t\nearrow T_+(\psi_0)}\n U(-t)\psi(t)\n_{(L^p)^N}=\infty$
contradicting Proposition \ref{ga} as $T_+(\psi_0)>0$.
The last assertion of the theorem follows from Proposition \ref{miF} and Hausdorff-Young inequality.
\end{proof}

\subsection{Global well-posedness in $\widehat{L}^p\cap L^2$}\label{pglobal2}

\begin{proof}[{\bf Proof of Theorem \ref{global2}}]
The  proof strategy is similar to the proof of  Theorem \ref{global1}.  Specifically, taking Theorem \ref{lhw} and Proposition \ref{miF} into account, to prove  Theorem \ref{global2},  it is enough to show that the $(\widehat{L}^p)^N$-norm of the solution remains bounded in finite time. Let $t\in [0, T].$ \\ 
\noindent
$\bullet$ \textbf{Case I:} $0<\gamma<\min\{\alpha,\frac{d}{2}\}$, $1\leq p<2$.
 
  By  \eqref{12},  we have
\begin{align}\label{19}
\left\n\psi_k(t)\right\n_{\widehat{L}^p}&\leq\left\n\psi_{0,k}\right\n_{\widehat{L}^p}+\sum_{l=1}^N\int_0^t\left\n\h_{a,\gamma}(\psi_l(s),\psi_l(s),\psi_k(s)\right\n_{\widehat{L}^p}ds\\
&\ \ \ +\sum_{l=1}^N\int_0^t\left\n\h_{a,\gamma}(\psi_k(s),\psi_l(s),\psi_l(s)\right\n_{\widehat{L}^p}ds.\nonumber
\end{align}
By Propositions \ref{lhe}\eqref{lheb} and \ref{miF}, we have 
\begin{align*}
\int_0^t\left\n\h_{a,\gamma}(\psi_k(s),\psi_l(s),\psi_m(s)\right\n_{\widehat{L}^p}ds&\lesssim\int_0^t\left\n\psi_k(s)\right\n_{L^2}\left\n\psi_l(s)\right\n_{L^2}\left\n\psi_m(s)\right\n_{\widehat{L}^p\cap L^2}ds\\
&\lesssim\int_0^t\left\n\psi_m(s)\right\n_{\widehat{L}^p\cap L^2}ds=T\n\psi_{0,m}\n_{L^2}+\int_0^t\left\n\psi_m(s)\right\n_{\widehat{L}^p}ds.
\end{align*}
Using this and \eqref{19}, we have  \begin{align*}
\left\n\psi(t)\right\n_{(\widehat{L}^p)^N}&\lesssim\left\n\psi_0\right\n_{(\widehat{L}^p)^N}+NT\n\psi_{0}\n_{(L^2)^N}+N\int_0^t\left\n\psi(s)\right\n_{(\widehat{L}^p)^N}ds.
\end{align*}
Now the result follows by Gronwall's lemma.\\
\noindent 
$\bullet$ \textbf{Case II:} $0<\gamma<\min\{\alpha,\frac{d}{2}\}$ ($1\leq p\leq\infty$). 

For $\alpha\in(\frac{2d}{2d-1},2), $ we assume $d\geq2$  and initial data is radial.  By \eqref{f1} and  \eqref{d1},  we have 
\begin{align*}
\n J_{k,l,m}(t)\n_{\widehat{L}^p}&\lesssim T^{\frac{1}{q_2}}\left\n\psi_k\right\n_{L_{T}^\infty(L^2)}\left\n\psi_l\right\n_{L_{T}^\infty(L^2)}\left\n\psi_m\right\n_{L_t^{{q_2}'}(\widehat{L}^p)}+\left\n\psi_k\right\n_{L_{T}^{2q_2}\left(L^{2\rho}\right)}\left\n\psi_l\right\n_{L_{T}^{2q_2}\left(L^{2\rho}\right)}\left\n\psi_m\right\n_{L_t^{{q_2}'}\left(\widehat{L}^p\right)}.
\end{align*}
 By \eqref{19} and Strichartz estimates, we have 
$
\n\psi_k(t)\n_{\widehat{L}^p}\lesssim\n\psi_{0,k}\n_{\widehat{L}^p}+(1+T^{\frac{1}{q_2}})\sum_{l=1}^N\left\n\psi_l\right\n_{L_t^{q_2'}(\widehat{L}^p)}
$ and so $\n\psi(t)\n_{(\widehat{L}^p)^N}\lesssim C_{\psi_0,T}\big(1+N\n\psi\n_{L_t^{{q_2}'}((\widehat{L}^p)^N)}\big)$. Gronwall's lemma gives the bound.
\end{proof}

\subsection{Improved  well-posedness in 1D}\label{piglobal}

 We have proved  the result (local and global) if $0<\gamma<\frac{1}{2}$ for $d=1$, see Theorems \ref{lw} - \ref{global2}.  Now we improve it to  $0<\gamma<1$ for global existence. To do this it is enough to prove it for $\frac{1}{2}\leq \gamma<1$.  In the proof, we impose the condition
  $\frac{2}{3p}\leq \gamma<1$ for $p\in(\frac{4}{3},2]$ and $\frac{2}{3p'}\leq \gamma<1$ for $p\in[2,4)$. Note that $\frac{2}{3p}<\frac{1}{2}$ for $p\in(\frac{4}{3},2]$ and $\frac{2}{3p'}<\frac{1}{2}$ for $p\in[2,4)$. The extra ingredient we use here is Lemma \ref{31} below.  

\begin{Lemma}[Generalized Strichartz estimate  \cite{fefferman1970inequalities,cazenave2001note}] \label{31} 
  $\n U(t)\phi\n_{L^{3p}(\R\times\R)}\lesssim\n\phi\n_{\widehat{L}^p(\R)}$ for $\frac{4}{3}<p\leq2.$
 As a consequence, by  the  duality argument,   for  $2\leq p<4,$   $\sup_{I\subset J}\left\n\int_I U(-s)F(s)ds\right\n_{\widehat{L}^p}\leq \n F\n_{L^{(3p')'}(J\times\R)}.$ 
\end{Lemma}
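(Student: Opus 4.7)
The plan is to identify the first inequality with the classical Fefferman-Stein adjoint-restriction estimate for the parabola in $\R^2$, and then to deduce the second inequality by duality.

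For the first estimate, write $g = \widehat{\phi}$ so that, up to inessential constants,
\begin{equation*}
U(t)\phi(x) = \int_{\R} g(\xi)\, e^{2\pi i x\xi - i(2\pi)^2 t \xi^2}\, d\xi = (E g)(t,x),
\end{equation*}
where $E$ is the Fourier-extension operator associated with the parabola $\{(\xi,2\pi\xi^2) : \xi\in\R\}\subset\R^2$. Setting $q = 3p$ and $r = p'$, the scaling relation $q = 3r'$ holds automatically, and the classical restriction/extension theorem for the parabola asserts that the critical estimate $\n E g\n_{L^q(\R^2)} \lesssim \n g\n_{L^r(\R)}$ on this scaling line is valid precisely when $q > 4$, i.e.\ when $p > 4/3$. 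I would therefore invoke \cite{fefferman1970inequalities} directly; the Schr\"odinger-equation reformulation is explicit in \cite{cazenave2001note}. A self-contained route would anchor at the Stein-Tomas $L^2$-endpoint $(q,r) = (6,2)$ (which is the standard $L^2$-Strichartz estimate at $p = 2$) and descend to $p\in(4/3,2)$ either via Stein's analytic interpolation or via the bilinear-restriction/Whitney-decomposition argument for the parabola in $\R^2$.

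For the second estimate I appeal to duality. Under the $L^2$-pairing one has $(\widehat{L}^p)^{\ast} = \widehat{L}^{p'}$. For $p\in[2,4)$ the conjugate exponent $p'$ lies in $(4/3,2]$, so the first part of the lemma applied to $p'$ yields, for every bounded interval $I\subset\R$,
\begin{equation*}
\n U(\cdot)\phi\n_{L^{3p'}(I\times\R)} \lesssim \n\phi\n_{\widehat{L}^{p'}}.
\end{equation*}
The adjoint of $\phi\mapsto U(\cdot)\phi:\widehat{L}^{p'}\to L^{3p'}(I\times\R)$ is exactly $F\mapsto \int_I U(-s)F(s)\,ds$ into $\widehat{L}^p$, whence
\begin{equation*}
\left\n \int_I U(-s)F(s)\,ds\right\n_{\widehat{L}^p} \lesssim \n F\n_{L^{(3p')'}(I\times\R)} \leq \n F\n_{L^{(3p')'}(J\times\R)}
\end{equation*}
for every $I\subset J$. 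Taking the supremum over $I\subset J$ is then immediate; no Christ-Kiselev-type maximalization is needed because the supremum sits only on the left and can be absorbed by applying the dual estimate to each sub-interval separately.

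The substantive step is the first estimate: it is a genuine Fourier-restriction statement that sits strictly above the Stein-Tomas $L^2$-endpoint, and it fails at the closed endpoint $p = 4/3$ by Fefferman's classical counterexample for the parabola, which is precisely why the stated range must be open on the left.
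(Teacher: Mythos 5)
Your argument is correct and coincides with what the paper does: the paper gives no proof of Lemma \ref{31} but simply cites \cite{fefferman1970inequalities,cazenave2001note} for the extension estimate on the parabola (equivalently the $L^{3p}$ bound for $U(t)$ on the scaling line $q=3r'$, $q>4$) and obtains the second bound ``by the duality argument,'' exactly as in your reduction via $(\widehat{L}^{p'})^{*}=\widehat{L}^{p}$ applied on each subinterval $I\subset J$. Nothing further is needed.
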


\begin{proof}[{\bf Proof of Theorem \ref{iglobal}}] 
As an application of Lemma \ref{31}, we  shall obtain some improved  estimate for  $\|I_{k,l,m}\|_{L^p}$ (see \eqref{nl}).  Specifically, compare the estimate  \eqref{c1} to \eqref{e1} below. We shall see that this will play a vital role to improve the range of the exponent  $\gamma$ in the Hartree factor. 

$\bullet$ \textbf{Step A I:} Improving the local result for $L^p$ space.

Note that $\n\varphi\n_{L^p}=\n\F^{-1}\varphi\n_{\widehat{L}^{p'}}=\n\overline{\F\overline{\varphi}}\n_{\widehat{L}^{p'}}=\n\F\overline{\varphi}\n_{\widehat{L}^{p'}}$ and by \eqref{-1} $\F M_s\F^{-1}=U(-1/16\pi^2s)$ as
\begin{align*}
\F M_s\F^{-1}\varphi(\xi)&
=\int_{\rd}e^{-2\pi ix\cdot\xi}e^{i|x|^2/4s}\F^{-1}\varphi(x)dx=\int_{\rd}e^{2\pi ix\cdot\xi}e^{i|x|^2/4s}\F\varphi(x)dx\\
&=\int_{\rd}e^{2\pi ix\cdot\xi}e^{-4\pi^2i|x|^2(-1/16\pi^2s)}\F\varphi(x)dx=[U(-1/16\pi^2s)\varphi](\xi).
\end{align*}  In view of these and \eqref{nl}, we obtain $\left\n I_{k,l,m}(t)\right\n_{L^p}$ is less or equal to
\begin{align*}
&\left\n\int_0^ts^{-\gamma}M_s^{-1}\widehat{\h}_{a,\gamma,s}(M_s\phi_k(s),RM_s\phi_l(s),M_s\phi_m(s))ds\right\n_{L^p}\\
&=\left\n\int_0^ts^{-\gamma}\F\overline{M_s^{-1}\widehat{\h}_{a,\gamma,s}(M_s\phi_k(s),RM_s\phi_l(s),M_s\phi_m(s))}ds\right\n_{\widehat{L}^{p'}}\\
&=\left\n\int_0^ts^{-\gamma}\F M_s\overline{\widehat{\h}_{a,\gamma,s}(M_s\phi_k(s),RM_s\phi_l(s),M_s\phi_m(s))}ds\right\n_{\widehat{L}^{p'}}\\
&=\left\n\int_0^ts^{-\gamma}U(-1/16\pi^2s)\F\overline{\widehat{\h}_{a,\gamma,s}(M_s\phi_k(s),RM_s\phi_l(s),M_s\phi_m(s))}ds\right\n_{\widehat{L}^{p'}}\\
&=\left\n\int_{1/t}^\infty s^{\gamma-2}U\left(\frac{-s}{16\pi^2}\right)\F\overline{\widehat{\h}_{a,\gamma,1/s}(M_{1/s}\phi_k(1/s),RM_{1/s}\phi_l(1/s),M_{1/s}\phi_m(1/s))}ds\right\n_{\widehat{L}^{p'}}.
\end{align*}
Using Lemma \ref{31} and changing the $s$-variable, we get that
\begin{align*}
\left\n I_{k,l,m}(t)\right\n_{L^p}&\lesssim\left\n s^{\gamma-2}\F\overline{\widehat{\h}_{a,\gamma,1/s}(M_{1/s}\phi_k(1/s),RM_{1/s}\phi_l(1/s),M_{1/s}\phi_m(1/s))}\right\n_{L^{\w{r}}([16\pi^2/t,\infty)\times\R)}\\
&\leq\left\n s^{\gamma-2}\F\overline{\widehat{\h}_{a,\gamma,1/s}(M_{1/s}\phi_k(1/s),RM_{1/s}\phi_l(1/s),M_{1/s}\phi_m(1/s))}\right\n_{L^{\w{r}}([1/t,\infty)\times\R)}\\
&=\left\n s^{2-\gamma-2/\w{r}}\F\overline{\widehat{\h}_{a,\gamma,s}(M_s\phi_k(s),RM_s\phi_l(s),M_s\phi_m(s))}\right\n_{L^{\w{r}}((0,t]\times\R)},
\end{align*}
where $\w{r}=(3p)'$ . 
Since $U(-t)\psi_k(t)=\phi_k(t)$, by \eqref{tf} and  \eqref{nl1}, we have
$$\widehat{\h}_{a,\gamma,s}(M_s\phi_k(s),RM_s\phi_l(s),M_s\phi_m(s))=\left[(|\cdot|^{\gamma-1}*S_{a,s})\Omega(\psi_k(s),\psi_l(s))\right]*M_s\phi_m(s).$$ 
In view of this, we may obtain
\begin{align*}
&\left\n\F\overline{\widehat{\h}_{a,\gamma,s}(M_s\phi_l(s),RM_s\phi_l(s),M_s\phi_k(s))}\right\n_{L^{\w{r}}}\\
&\leq\left\n\F\left[(|\cdot|^{\gamma-1}*S_{a,s})\Omega(\psi_k(s),\psi_l(s))\right]\right\n_{L^{3p/2}}\left\n\F M_s\phi_m(s)\right\n_{L^{p'}}\\
&\lesssim\left\n|\cdot|^{-\gamma}*|\F\left[\Omega(\psi_k(s),\psi_l(s))\right]|\right\n_{L^{3p/2}}\left\n\phi_m(s)\right\n_{L^p}\lesssim\left\n\F\left[\Omega(\psi_k(s),\psi_l(s))\right]\right\n_{L^{\w{R}}}\left\n\phi_k(s)\right\n_{L^p},
\end{align*}where $\w{R}=\left(1+\frac{2}{3p}-\gamma\right)^{-1}.$
Using Lemma \ref{13} we have \begin{align*}
\left\n\F\overline{\widehat{\h}_{a,\gamma,s}(M_s\phi_l(s),RM_s\phi_l(s),M_s\phi_k(s))}\right\n_{L^{\w{r}}}&\lesssim|s|^{1-1/\w{R}}\left\n\psi_k(s)\right\n_{L^{2\w{R}}}\left\n\psi_l(s)\right\n_{L^{2\w{R}}}\left\n\phi_m(s)\right\n_{L^p}.
\end{align*}
Note that $3-\gamma-2/\w{r}-1/\w{R}=0$ and hence by H\"older's inequality
\begin{align}\label{e1}
\left\n I_{k,l,m}(t)\right\n_{L^p}&\lesssim\left\n\psi_k\right\n_{L_T^{\w{Q}}(L^{2\w{R}})}\left\n\psi_l\right\n_{L_T^{\w{Q}}(L^{2\w{R}})}\left\n\phi_m\right\n_{L_t^{2/(2-\gamma)}(L^p)}\\
&\lesssim T^{1-\frac{\gamma}{2}}\left\n\psi_k\right\n_{L_T^{\w{Q}}(L^{2\w{R}})}\left\n\psi_l\right\n_{L_T^{\w{Q}}(L^{2\w{R}})}\left\n\phi_m\right\n_{L_t^{\infty}(L^p)},\nonumber
\end{align} where $\w{Q}=\big(\frac{\gamma}{4}-\frac{1}{6p}\big)^{-1}.$ Note that $\w{Q},\w{R}\geq2$ by the conditions imposed on $\gamma$.
Let  $q_1=\frac{8}{\gamma}\ \text{and}\  r=\frac{4}{2-\gamma}.$
We  define 
\begin{align*}
V_b^T=\big\{v\in L_T^\infty(L^p(\rd)\cap L^2(\rd)): &\ \n v\n_{L_T^\infty(L^p\cap L^2)}\leq b,\\
&\ \n U(t)v(t)\n_{L_T^{\w{Q}}(L^{2\w{R}})}\leq b,\ \n U(t)v(t)\n_{L_T^{q_1}(L^r)}\leq b\big\}
\end{align*} 
and
$\mathcal{V}_b^T=(V_b^T)^N.$
Now arguing as  in   Case I  of the  proof of Theorem \ref{lw}, we  can establish the  local  well-posedness of  \eqref{hf} with $0<\gamma<2$   in $L^p(\R)\cap L^2(\R).$ 
\\
\noindent
$\bullet$  \textbf{Step A II:} Improving the global result for $0<\gamma<1$ for $L^p$ space.

Note that from \eqref{6} and \eqref{e1} we have 
\begin{align*}
\n\phi_k(t)\n_{L^p}&\lesssim\n\psi_{0,k}\n_{L^p}+\sum_{l=1}^N\left\n\psi_l\right\n_{L_T^{\w{Q}}(L^{2\w{R}})}\left\n\psi_l\right\n_{L_T^{\w{Q}}(L^{2\w{R}})}\left\n\phi_k\right\n_{L_t^{2/(2-\gamma)}(L^p)}\\
&\ \ \ +\sum_{l=1}^N\left\n\psi_k\right\n_{L_T^{\w{Q}}(L^{2\w{R}})}\left\n\psi_l\right\n_{L_T^{\w{Q}}(L^{2\w{R}})}\left\n\phi_l\right\n_{L_t^{2/(2-\gamma)}(L^p)}.
\end{align*}
Now $(\w{Q},2\w{R})$ being admissible,  Strichartz estimate  gives
$\n\phi_k(t)\n_{L^p}\lesssim\n\psi_{0,k}\n_{L^p}+\sum_{l=1}^N\left\n\phi_l\right\n_{L_t^{2/(2-\gamma)}(L^p)}.
$ 
Now we can proceed as before in Subsection \ref{pglobal1}.\\
$\bullet$ \textbf{Step B I:} Improving the local result for $\widehat{L}^p$-space.

Using Lemma \ref{31} we have that 
\begin{align*}
\left\n\int_0^tU(t-s)H_{\gamma,\psi}(\psi_k)(s)ds\right\n_{\widehat{L}^p}&\lesssim\sum_{l=1}^N\n \h_{a,\gamma}(\psi_l,\psi_l,\psi_k)\n_{L^r([0,t]\times\R)}.
\end{align*}
Now using  H\"older, Hausdorff-Young and Hardy-Littlewood-Sobolev
\begin{align*}
\n\h_{a,\gamma}(\psi_k(s),\psi_l(s),\psi_m(s))\n_{L^r}&\leq\left\n|\cdot|^{-\gamma}*|\psi_k(s)\overline{\psi_l(s)}|\right\n_{L^{\w{R}}}\n\psi_m\n_{\widehat{L}^p}\\
&\lesssim\left\n\psi_k(s)\overline{\psi_l(s)}\right\n_{L^R}\n\psi_m\n_{\widehat{L}^p}\leq\left\n\psi_k(s)\right\n_{L^{2R}}\left\n\psi_l(s)\right\n_{L^{2R}}\n\psi_m\n_{\widehat{L}^p},
\end{align*}
where $\w{R}=\frac{3p'}{2}$, $R=\big(\frac{5}{3}-\gamma-\frac{2}{3p}\big)^{-1}.$
Therefore H\"older's inequality in $t$-variable we have (recall $J_{k,l,m}$ from \eqref{f1}), we have
\begin{align}\label{e2}
\n J_{k,l,m}(t)\n_{\widehat{L}^p}&\lesssim\n \psi_k\n_{L^Q\left([0,T_0],L^{2R}\right)}\n \psi_l\n_{L^Q\left([0,T_0],L^{2R}\right)}\n \psi_m\n_{L^{2/(2-\gamma)}\left([0,t],\widehat{L}^p\right)}\\
&\lesssim T^{1-\frac{\gamma}{2}}\n \psi_k\n_{L^Q\left([0,T_0],L^{2R}\right)}\n \psi_l\n_{L^Q\left([0,T_0],L^{2R}\right)}\n \psi_m\n_{L_t^\infty(\widehat{L}^p)},\nonumber
\end{align}where $Q=\big(\frac{\gamma}{4}+\frac{1}{6p}-\frac{1}{6}\big)^{-1}.$ Note that $Q,R\geq2$ by the conditions imposed on $\gamma$.
Let $q_1=\frac{8}{\gamma}\ \text{and}\  r=\frac{4}{2-\gamma},$ and  for $T, b>0,$  introduce the space
$$U_b^T=\left\{v\in L_T^\infty\big(L^2(\rd)\cap\widehat{L}^p(\rd)\big):\n v\n_{L_T^\infty(L^2\cap\widehat{L}^p)}\leq b,\n v\n_{L_T^{q_1}(L^r)}\leq b, \n v\n_{L_T^{Q}(L^{2R})}\leq b\right\}.$$
Now we proceed as in Case I in Subsection \ref{plhw}.\\
\noindent
$\bullet$ \textbf{Step B II:} Improving the global result for $\widehat{L}^p$-space.

By \eqref{19} and \eqref{e2} we have 
\begin{align*}
\n\psi_k(t)\n_{\widehat{L}^p}&\lesssim\n\psi_{0,k}\n_{\widehat{L}^p}+\sum_{l=1}^N\n \psi_l\n_{L^Q\left([0,T],L^{2R}\right)}^2\n \psi_k\n_{L^{2/(2-\gamma)}\left([0,t],\widehat{L}^p\right)}\\
&\ \ \ +\sum_{l=1}^N\n \psi_k\n_{L^Q\left([0,T],L^{2R}\right)}\n \psi_l\n_{L^Q\left([0,T],L^{2R}\right)}\n \psi_l\n_{L^{2/(2-\gamma)}\left([0,t],\widehat{L}^p\right)}
\end{align*}
Strichartz estimate  gives
$\n\psi_k(t)\n_{\widehat{L}^p}\lesssim\n\psi_{0,k}\n_{\widehat{L}^p}+\sum_{l=1}^N\n \psi_l\n_{L^{2/(2-\gamma)}\left([0,t],\widehat{L}^p\right)}$ as $(Q,2R)$ is admissible.
Now we can proceed as before in Subsection \ref{pglobal2}.
\end{proof}

%
%
%
%
%
%
%

\section{Failure of $C^3$-smoothness in  mere $\widehat{L}^p$ spaces}\label{pill1}
\subsection{Proof of Theorem  \ref{ill1} }
\begin{proof}[Proof of Theorem \ref{ill1}]
\textcolor{black}{
It is known\footnote{In \cite{bourgain1997periodic}, Bourgain introduced this approach to establish failure of $C^3-$smoothness  for the solution map of KdV and mKdv,  see also \cite{tzvetkov1999remark}. Since then, many authors have used this approach,  see  for example,   \cite[Proposition 4.1]{choffrut2018ill} for cubic nonlinear half-wave equation.} that if the map $\mathcal{U}(t)$ is $C^{3}-$smooth at zero in $(\widehat{L}^p(\R^d))^N,$ then  the necessary condition is that there exist $C>0$ such that 
\begin{eqnarray}\label{bi}
\left\| \frac{\partial^3[\mathcal{U}(t)(\delta \psi_0)]}{\partial\delta^3}\bigg|_{\delta=0} \right \|_{(\widehat{L}^p)^N}\leq C\|\psi_0\|_{(\widehat{L}^p)^N}^3 \quad \text{for all} \  \psi_0\in (\widehat{L}^p(\R^d))^N.\ 
\end{eqnarray}
In the following we shall show that  estimate \eqref{bi} cannot hold with a constant $C$ independent of $\psi_0\in (\widehat{L}^p(\R^d))^N$.
To do this, consider the problem (with $K=|\cdot|^{-\gamma}$)
\begin{equation*}
i\partial_t\psi_k-(-\Delta)^{\alpha/2}\psi=\sum_{l=1}^N(K\ast|\psi_l|^2)\psi_k-(K\ast(\psi_k\bar{\psi_l}))\psi_l,\quad\psi_k(0)=\delta\psi_{0,k}
\end{equation*}where $\delta\geq0$, $\psi_0=(\psi_{0,1}, \cdots, \psi_{0, N})$, $k=1,2,..., N.$
By Duhamel's formula, we have 
\begin{equation}\label{z}
\psi_k(\delta)(t)=U_\alpha(t)\delta\psi_{0,k}-i\int_0^tU_\alpha(t-\tau)[(K\ast(\psi_l(\delta)\bar{\psi_l}(\delta))\psi_k(\delta)-(K\ast(\psi_k(\delta)\bar{\psi_l}(\delta))\psi_l(\delta)](\tau)d\tau
\end{equation}where Einstein's convention is used for summation\footnote{i.e. repeated indexed in a product are summed up: for example $a_lb_lc_k$ stands for $c_k\sum_{l=1}^Na_lb_l$}.  
Note that one has $$\mathcal{U}(t)(\delta\psi_0)=\psi(\delta,t)=\left(\psi_1(\delta)(t), \cdots, \psi_N(\delta)(t)\right):=\psi(\delta)(t).$$
Therefore, in order to show  an estimate \eqref{bi} fail (and hence failure of $C^3-$smoothness),  it is suffice 
to show that the following estimate fails
\begin{equation}\label{Y}
\left\|\frac{\partial^3\psi_k}{\partial\delta^3}(0,t)\right\|_{\widehat{L}^p}\leq C\|\psi_0\|_{(\widehat{L}^p)^N}^3
\end{equation}
 for atleast one $k$.  To this end,  we  shall first compute $\frac{\partial^3\psi_k}{\partial\delta^3}(0,t)$. Put $I_{k,l,m}(\delta,t,\cdot)=\int_0^tU_\alpha(t-\tau)[(K\ast(\psi_k(\delta)\bar{\psi_l}(\delta))\psi_m(\delta)](\tau)d\tau.$
 By straightforward calculations,  we get
\begin{align*}
\frac{\partial I_{k,l,m}}{\partial\delta}&=\int_0^tU_\alpha(t-\tau)[(K\ast(\psi_k\frac{\partial\bar{\psi_l}}{\partial\delta}+\frac{\partial\psi_k}{\partial\delta}\bar{\psi_l})))\psi_m+(K\ast(\psi_k\bar{\psi_l}))\frac{\partial\psi_m}{\partial\delta}](\tau)d\tau,\\
\frac{\partial^2 I_{k,l,m}}{\partial\delta^2}
&=\int_0^tU_\alpha(t-\tau)[(K\ast(\psi_k\frac{\partial^2\bar{\psi_l}}{\partial\delta^2}+2\frac{\partial\psi_k}{\partial\delta}\frac{\partial\bar{\psi_l}}{\partial\delta}+\frac{\partial^2\psi_k}{\partial\delta^2}\bar{\psi_l})))\psi_m\\
&\qquad+2(K\ast(\psi_k\frac{\partial\bar{\psi_l}}{\partial\delta}+\frac{\partial\psi_k}{\partial\delta}\bar{\psi_l})))\frac{\partial\psi_m}{\partial\delta}+(K\ast(\psi_k\bar{\psi_l}))\frac{\partial^2\psi_m}{\partial\delta^2}](\tau)d\tau,\\
\frac{\partial^3 I_{k,l,m}}{\partial\delta^3}
&=\int_0^tU_\alpha(t-\tau)[(K\ast(\psi_k\frac{\partial^3\bar{\psi_l}}{\partial\delta^3}+3\frac{\partial\psi_k}{\partial\delta}\frac{\partial^2\bar{\psi_l}}{\partial\delta^2}+3\frac{\partial^2\psi_k}{\partial\delta^2}\frac{\partial\bar{\psi_l}}{\partial\delta}+\frac{\partial^3\psi_k}{\partial\delta^3}\bar{\psi_l})))\psi_m\\
&\qquad+3(K\ast(\psi_k\frac{\partial^2\bar{\psi_l}}{\partial\delta^2}+2\frac{\partial\psi_k}{\partial\delta}\frac{\partial\bar{\psi_l}}{\partial\delta}+\frac{\partial^2\psi_k}{\partial\delta^2}\bar{\psi_l})))\frac{\partial\psi_m}{\partial\delta}\\
&\qquad+3(K\ast(\psi_k\frac{\partial\bar{\psi_l}}{\partial\delta}+\frac{\partial\psi_k}{\partial\delta}\bar{\psi_l})))\frac{\partial^2\psi_m}{\partial\delta^2}+(K\ast(\psi_k\bar{\psi_l}))\frac{\partial^3\psi_m}{\partial\delta^3}](\tau)d\tau.
\end{align*} 
Then 
 by the observation $\psi_k(0,t,\cdot)=0$ and using \eqref{z} we have
\begin{align*}
\frac{\partial\psi_k}{\partial\delta}(0,t,\cdot)&=U_\alpha(t)\psi_{0,k}-i(\frac{\partial I_{l,l,k}}{\partial\delta}-\frac{\partial I_{k,l,l}}{\partial\delta})(0,t,\cdot)=U_\alpha(t)\psi_{0,k},\\
\frac{\partial^2\psi_k}{\partial\delta^2}(0,t,\cdot)&=-i(\frac{\partial^2 I_{l,l,k}}{\partial\delta^2}-\frac{\partial^2 I_{k,l,l}}{\partial\delta^2})(0,t,\cdot)=0,\\
\frac{\partial^3\psi_k}{\partial\delta^3}(0,t,\cdot)&=-i(\frac{\partial^3 I_{l,l,k}}{\partial\delta^3}-\frac{\partial^3 I_{k,l,l}}{\partial\delta^3})(0,t,\cdot)\\
&=-6i\int_0^tU(t-\tau)[(K\ast(U_\alpha(\tau)\psi_{0,l}\overline{U_\alpha(\tau)\psi_{0,l}})U_\alpha(\tau)\psi_{0,k})\\
&\qquad-(K\ast(U_\alpha(\tau)\psi_{0,k}\overline{U_\alpha(\tau)\psi_{0,l}})U_\alpha(\tau)\psi_{0,l})]d\tau=:-6\mathcal{A}_k(t)(\psi_0).
\end{align*}
Then \eqref{Y} will fail for $k=1$ if \begin{equation}\label{Z}
\|\mathcal{A}_1(t)(\psi_0)\|_{\widehat{L}^p}\leq C\|\psi_0\|_{(\widehat{L}^p)^N}^3
\end{equation}fails. Below we adopt the technique from \cite{carles2014cauchy} (where the Hartree case is treated with $p=\infty$) to our scenario.}

\textcolor{black}{
Let $\psi_0=(\psi_{0,1},\psi_{0,2},0,\cdots0)\in \mathcal{S}(\rd)^N$ so that 
\[
\mathcal{A}_1(t)(\psi_0)=i\int_0^tU(t-\tau)[(K\ast|U_\alpha(\tau)\psi_{0,2}|^2)U_\alpha(\tau)\psi_{0,1})-(K\ast(U_\alpha(\tau)\psi_{0,1}\overline{U_\alpha(\tau)\psi_{0,2}})U_\alpha(\tau)\psi_{0,2})]d\tau.
\] }
Define a family $\{\psi_0^h\}_{h>0}$ of functions by
$$\psi_0^h(x)=h^\lambda\psi_0(h x),\quad (x\in\rd, \lambda>0).$$
For all $h>0$,
$$\n\psi_{0,k}^h\n_{\widehat{L}^p}=h^{\lambda-d/p}\n\psi_{0,k}\n_{\widehat{L}^p}.$$
Choose  $\lambda=d/p$  so that  $$\n\psi_{0,k}^h\n_{\widehat{L}^p}=\n\psi_{0,k}\n_{\widehat{L}^p} \quad \text{for all} \quad h>0,$$  and thus RHS of \eqref{Z} remain constant  for all $h>0$.

Next, we develop the expression of  $\| \mathcal{A}_1(\psi_0)(t)\|_{\widehat{L}^p}$ and  will show that  $\| \mathcal{A}_1(\psi_0)(t)\|_{\widehat{L}^p}\to\infty$ as $h\to0$, contradicting \eqref{Z}. 
We  note that   (with $c_\alpha=-(2\pi)^\alpha$)
\begin{align}\label{25}
&\F\left[\left(K*\left(U_\alpha(s)\psi_{0,k}^h(s)\overline{U_\alpha(s)\psi_{0,l}^h}\right)\right)U_\alpha(s)\psi_{0,m}^h\right](\xi)\nonumber\\
&\asymp\int_{\rd}\widehat{K}(\xi-y)\left(\F\left[U_\alpha(s)\psi_{0,k}^h\right]*\F\left[\overline{U_\alpha(s)\psi_{0,l}^h}\right]\right)(\xi-y)\F \left[U_\alpha(s)\psi_{0,m}^h\right](y)dy\nonumber\\
&=\int_{\R^{2d}}e^{ic_\alpha s \left(|y|^\alpha+|\xi-y-z|^\alpha-|z|^\alpha\right)}\widehat{K}(\xi-y)\F\psi_{0,k}^h(\xi-y-z)\F\overline{\psi_{0,l}^h}(z)\F \psi_{0,m}^h(y)dydz\\
&=h^{3(\lambda-d)}\int_{\R^{2d}}e^{ic_\alpha s \left(|y|^\alpha+|\xi-y-z|^\alpha-|z|^\alpha\right)}\widehat{K}(\xi-y)\widehat{\psi_{0,k}}(\frac{\xi-y-z}{h})\widehat{\overline{\psi_{0,l}}}(\frac{z}{h})\widehat{\psi_{0,m}}(\frac{y}{h})dydz.\nonumber
\end{align}
We rewrite that \begin{equation}\label{b}
\mathcal{A}_1(\psi_0)(t)=\int_0^t U_\alpha(t-s) g(s)ds=:F(t)
\end{equation}with
\begin{equation}\label{g3}
g(s)=-i\left(K*|U_\alpha(s)\psi_{0,2}|^2\right)U_\alpha(s)\psi_{0,1}+i\left(K*\left((U_\alpha(s)\psi_{0,1}\overline{U_\alpha(s)\psi_{0,2}}\right)\right)U_\alpha(s)\psi_{0,2}.
\end{equation}
Then
\begin{align*}\label{26}
&\n \mathcal{A}_1(\psi_0^h)(t)\n_{\widehat{L}^p}^{p'}=\n \F \mathcal{A}_1(\psi_0^h)(t)\n_{L^{p'}}^{p'}\\
&=\int_{\rd}\bigg|\int_0^t\F\left[ U_\alpha(t-s)\left[\left(K*|U_\alpha(s)\psi_{0,2}^h|^2\right)U_\alpha(s)\psi_{0,1}^h\right]\right](\xi)ds\nonumber\\
&\hspace{40pt}-\int_0^t\F\left[ U_\alpha(t-s)\left[\left(K*\left((U_\alpha(s)\psi_{0,1}^h\overline{U_\alpha(s)\psi_{0,2}^h}\right)\right)U_\alpha(s)\psi_{0,2}^h\right]\right](\xi)ds\bigg|^{p'}d\xi\nonumber\\
&=\int_{\rd}\bigg|\int_0^te^{ic_\alpha (t-s)|\xi|^\alpha}\F\left[\left(K*|U_\alpha(s)\psi_{0,2}^h|^2\right)U_\alpha(s)\psi_{0,1}^h\right](\xi)ds\\
&\hspace{40pt}-\int_0^te^{ic_\alpha (t-s)|\xi|^\alpha}\F\left[\left(K*\left(U_\alpha(s)\psi_{0,1}^h\overline{U_\alpha(s)\psi_{0,2}^h}\right)\right)U_\alpha(s)\psi_{0,2}^h\right](\xi)ds\bigg|^{p'}d\xi\nonumber =: I.
\end{align*} 
Performing the change of variables ($y\mapsto hy,  z\mapsto hz, s\mapsto s/h^{\alpha}$), we obtain
\begin{align*}
&\int_0^te^{ic_\alpha (t-s)|\xi|^\alpha}\F\left[\left(K*\left(U_\alpha(s)\psi_{0,k}^h\overline{U_\alpha(s)\psi_{0,l}^h}\right)\right)U_\alpha(s)\psi_{0,m}^h\right](\xi)ds\\
&=\int_0^te^{ic_\alpha (t-s)|\xi|^\alpha}h^{3(\lambda-d)}\int_{\R^{2d}}e^{ic_\alpha s \left(|y|^\alpha+|\xi-y-z|^\alpha-|z|^\alpha\right)}\widehat{K}(\xi-y)\\
&\hspace{145pt}\widehat{\psi_{0,k}}(\frac{\xi-y-z}{h})\widehat{\overline{\psi_{0,l}}}(\frac{z}{h})\widehat{\psi_{0,m}}(\frac{y}{h})dydzds\\
&=h^{3\lambda-d-\alpha}\int_0^{th^{\alpha}}e^{ic_\alpha (t-h^{-\alpha}s)|\xi|^\alpha}\int_{\R^{2d}}e^{ic_\alpha s \left(|y|^\alpha+h^{-\alpha}|\xi-h(y+z)|^\alpha-|z|^\alpha\right)}\widehat{K}(\xi-h y)\\
&\hspace{145pt}\widehat{\psi_{0,k}}(\frac{\xi}{h}-y-z)\widehat{\overline{\psi_{0,l}}}(z)\widehat{\psi_{0,m}}(y)dydzds.
\end{align*}
In view of this  we may  rewrite 
\begin{align*}
 I=& \int_{\rd}\bigg|h^{3\lambda-d-\alpha}\int_0^{th^{\alpha}}e^{ic_\alpha (t-h^{-\alpha}s)|\xi|^\alpha}\int_{\R^{2d}}e^{ic_\alpha s \left(|y|^\alpha+h^{-\alpha}|\xi-h(y+z)|^\alpha-|z|^\alpha\right)}\widehat{K}(\xi-h y)\\
&\hspace{45pt}\left(\widehat{\psi_{0,2}}(\frac{\xi}{h}-y-z)\widehat{\psi_{0,1}}(y)-\widehat{\psi_{0,1}}(\frac{\xi}{h}-y-z)\widehat{\psi_{0,2}}(y)\right)\widehat{\overline{\psi_{0,2}}}(z)dydzds\bigg|^{p'}d\xi\\
=&\ h^{d+(3\lambda-d-\alpha)p'}\int_{\rd}\bigg|\int_0^{th^{\alpha}}e^{ic_\alpha (th^{\alpha}-s)|\xi|^\alpha}\int_{\R^{2d}}e^{ic_\alpha s \left(|y|^\alpha+|\xi-y-z)|^\alpha-|z|^\alpha\right)}\widehat{K}(h(\xi-y))\\
&\hspace{45pt}\left(\widehat{\psi_{0,2}}(\xi-y-z)\widehat{\psi_{0,1}}(y)-\widehat{\psi_{0,1}}(\xi-y-z)\widehat{\psi_{0,2}}(y)\right)\widehat{\overline{\psi_{0,2}}}(z)dydzds\bigg|^{p'}d\xi\\
=&\ h^{d+(3\lambda-d-\alpha+(\gamma-d))p'}\int_{\rd}\bigg|\int_0^{th^{\alpha}}e^{c_\alpha i(th^{\alpha}-s)|\xi|^\alpha}\int_{\R^{2d}}e^{ic_\alpha s \left(|y|^\alpha+|\xi-y-z)|^\alpha-|z|^\alpha\right)}\widehat{K}(\xi-y)\\
&\hspace{45pt}\left(\widehat{\psi_{0,2}}(\xi-y-z)\widehat{\psi_{0,1}}(y)-\widehat{\psi_{0,1}}(\xi-y-z)\widehat{\psi_{0,2}}(y)\right)\widehat{\overline{\psi_{0,2}}}(z)dydzds\bigg|^{p'}d\xi
\end{align*}
as  the  kernel $K$  is homogeneous of degree $-\gamma$  (as $a=0$). Using \eqref{25}, we have
\begin{align*}
I=& h^{d+(3\lambda-2d-\alpha+\gamma)p'}\int_{\rd}\bigg|\int_0^{th^{\alpha}}e^{ic_\alpha (th^{\alpha}-s)|\xi|^\alpha}\F\left[\left(K*|U_\alpha\psi_{0,2}(s)|^2\right)U_\alpha(s)\psi_{0,1}(s)\right](\xi)ds-\\
&\hspace{60pt}-\int_0^{th^{\alpha}}e^{ic_\alpha (th^{\alpha}-s)|\xi|^\alpha}\F\left[\left(K*\left(U_\alpha(s)\psi_{0,1}\overline{U_\alpha(s)\psi_{0,2}}\right)\right)U_\alpha(s)\psi_{0,2}\right](\xi)ds\bigg|^{p'}d\xi\\
=&\ h^{d+(3\lambda-2d-\alpha+\gamma)p'}\n \mathcal{A}_1(\psi_0)(th^{\alpha})\n_{\widehat{L}^p}^{p'}.
\end{align*}
Since $\lambda=d/p$, combining  the above equalities,  we obtain 
  \begin{equation}\label{27}
\n \mathcal{A}_1(\psi_0^h)(t)\n_{\widehat{L}^p}\asymp h^{2d/p-d-\alpha+\gamma}\n \mathcal{A}_1(\psi_0)(th^{\alpha})\n_{\widehat{L}^p}.
\end{equation}
Next we investigate  more closely the quantity $F$ given by \eqref{b}.
Taylor formula gives
$$F(t)=F(0)+F'(0)t+\frac{t^2}{2}\int_0^1(1-\theta)F''(t\theta)d\theta.$$
Note that $F(0)=0$ and hence for $0\leq t\leq1,$ we have 
\begin{align}\label{dp1}
\n F(t)-F'(0)t\n_{\widehat{L}^p}\leq t^2\int_0^1\n F''(s\theta)\n_{\widehat{L}^p}d\theta\leq t^2\n F''\n_{L^\infty\left([0,1],\widehat{L}^p\right)}.
\end{align}
By Leibniz integral rule and Lemma \ref{g1} below,  the first derivative of $F$ is given by \begin{align*}
F'(t)&=U_\alpha(0)g(t)+\int_0^t\frac{\partial}{\partial t}U_\alpha(t-s)g(s)ds=g(t)-i\int_0^tU_\alpha(t-s)(-\Delta)^{\alpha/2}g(s)ds
\end{align*}  
and similarly the second derivative of $F$ is given by
\begin{align*}
F''(t)=g'(t)-i(-\Delta)^{\alpha/2}g(t)-\int_0^tU_\alpha(t-s)(-\Delta)^\alpha g(s)ds.
\end{align*}
Hence, we have 
\begin{align}\label{dp2}
\n F''\n_{L^\infty\left([0,1],\widehat{L}^p\right)}&\leq \n g'\n_{L^\infty\left([0,1],\widehat{L}^p\right)}+\n(-\Delta)^{\alpha/2}g\n_{L^\infty\left([0,1],\widehat{L}^p\right)}+\n(-\Delta)^\alpha g\n_{L^\infty\left([0,1],\widehat{L}^p\right)} < \infty, \nonumber
\end{align}
as  $\psi_0\in\mathcal{S}(\rd)^N$.   Using \eqref{dp1} and the above, we have 
\begin{align*}
\n F(t)-F'(0)t\n_{\widehat{L}^p}\lesssim t^2
\end{align*}
Using this and  $F'(0)=g(0),$ we obtain 
$t\n g(0)\n_{\widehat{L}^p}\lesssim\n F(t)\n_{\widehat{L}^p} +t^2.$
Hence in particular
$$th^{\alpha}\n g(0)\n_{\widehat{L}^p}\lesssim\n \mathcal{A}_1(\psi_0)(th^{\alpha})\n_{\widehat{L}^p} +t^2h^{2\alpha}$$
and so by \eqref{27}
\begin{align*}
\n \mathcal{A}_1(\psi_0^h)(t)\n_{\widehat{L}^p}&\asymp h^{2d/p-d-\alpha+\gamma}\n \mathcal{A}_1(\psi_0)(th^{\alpha})\n_{\widehat{L}^p}\gtrsim th^{2d/p-d+\gamma}\n g(0)\n_{\widehat{L}^p}-t^2h^{2d/p-d+\alpha+\gamma}.
\end{align*}
Now $2d/p-d+\gamma<0$ 
  if $\gamma<d-2d/p=2d(\frac{1}{2}-\frac{1}{p})$. Now using Lemma \ref{g2} below choose $\psi_{0,1},\psi_{0,2}\in\mathcal{S}(\rd)$ such that $\n g(0)\n_{\widehat{L}^p}\neq0$. Then since $\alpha>0,$ we have 
\begin{align*}
\n \mathcal{A}_1(\psi_0^h)(t)\n_{\widehat{L}^p}&\gtrsim th^{2d/p-d+\gamma}\n g(0)\n_{\widehat{L}^p}-t^2h^{2d/p-d+\gamma+\alpha}\longrightarrow\infty
\end{align*}
as $h\rightarrow0$. 
This completes the proof.
\end{proof}
\begin{center}
\begin{figure}
\begin{tikzpicture}[scale=.5]
\draw[dashed,gray] (-7,0)--(7,0) node[anchor=west] {\tiny{}};
\draw[dashed,gray] [->](0,-.5)--(0,3) node[anchor=north east] {\tiny{$ $}};
\draw[gray] [-] (-2,0) to [out=0,in=180] (0,2);
\draw[gray] [-] (0,2) to [out=0,in=180] (2,0);
\draw [-] (-3,0) to [out=0,in=180] (0,2);
\draw [-] (0,2) to [out=0,in=180] (3,0);
\draw[gray][->](2,0)--(7,0);
\draw[gray][->](-2,0)--(-7,0);
\draw[->](3,0)--(7,0);
\draw[->](-3,0)--(-7,0);
\draw[dashed,gray] (-6,2)--(6,2)node[anchor=west]{\textcolor{black}{\tiny{$1$}}};
\end{tikzpicture}
\caption{{Graphs of $\psi_{0,1}$, $\psi_{0,2}$ are indicated in gray and black curves respectively. 
}}\label{fig2}
\end{figure}
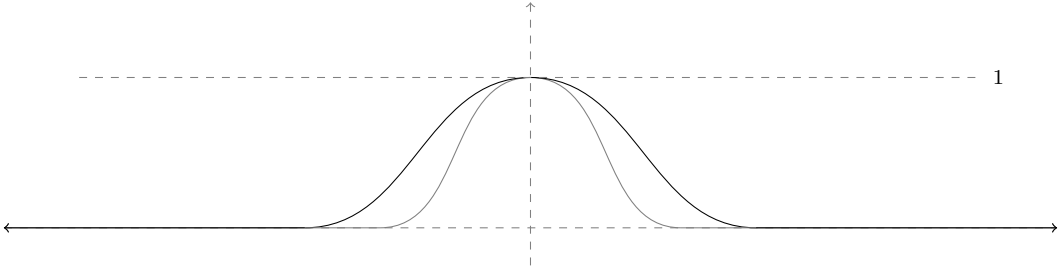
\end{center}
\begin{Remark}\label{g5}
In the above we presented proof for Hartree Fock case. 
For the reduced Hartree Fock case fixing $\psi_{0,2}=0$, the $g$ defined in \eqref{g3} would be replaced by $g(s)=-i\left(K*|U_\alpha(s)\psi_{0,1}|^2\right)U_\alpha(s)\psi_{0,1}$ 
 and hence Lemma \ref{g2} below would be fulfilled for any $\psi_{0,1}\neq0$.
 \end{Remark}

 \begin{Lemma}\label{g2}
Let $g$ be as in \eqref{g3}. Then there exist $\psi_{0,1},\psi_{0,2}\in\mathcal{S}(\rd)$ such that $\n g(0)\n_{\widehat{L}^p}\neq0$.
\end{Lemma}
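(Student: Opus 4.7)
The plan is to evaluate $g(0)$ explicitly and exhibit specific $\psi_{0,1},\psi_{0,2}\in\mathcal{S}(\rd)$ (the two curves of Figure~\ref{fig2}) for which the resulting function is a nonzero Schwartz function. Since $U_\alpha(0)=I$ and $K(x)=|x|^{-\gamma}$ (because $a=0$), we have
\[
g(0)=-i\,\psi_{0,1}\bigl(K\ast|\psi_{0,2}|^2\bigr)+i\,\psi_{0,2}\bigl(K\ast(\psi_{0,1}\overline{\psi_{0,2}})\bigr).
\]
Note that if $\psi_{0,1}=\psi_{0,2}$ the two terms cancel and $g(0)\equiv 0$, so the key is an asymmetric choice.

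First I would choose two nonnegative real-valued Schwartz functions $\psi_{0,1},\psi_{0,2}\geq 0$ with $\psi_{0,1}(0)=\psi_{0,2}(0)=1$, $\psi_{0,1}(x)\leq\psi_{0,2}(x)$ pointwise, and $\psi_{0,1}\not\equiv\psi_{0,2}$ on the support of $\psi_{0,2}$ (e.g.\ $\psi_{0,1}(x)=e^{-|x|^2}$, $\psi_{0,2}(x)=e^{-|x|^2/2}$, matching the nested profiles in Figure~\ref{fig2}). Evaluating at $x=0$ (and using that $K$ is even) yields
\[
g(0)(0)=-i\int_{\rd}|y|^{-\gamma}\,\psi_{0,2}(y)\bigl[\psi_{0,2}(y)-\psi_{0,1}(y)\bigr]\,dy.
\]
Since $|y|^{-\gamma}>0$ for $y\neq 0$, $\psi_{0,2}\geq 0$, and $\psi_{0,2}-\psi_{0,1}\geq 0$ with strict inequality on a set of positive Lebesgue measure contained in $\{\psi_{0,2}>0\}$, the integrand is nonnegative and strictly positive on a set of positive measure. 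Therefore $g(0)(0)\neq 0$, and in particular $g(0)\not\equiv 0$.

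Next I would check that $g(0)\in\mathcal{S}(\rd)$. Since $K\in L^1_{\mathrm{loc}}(\rd)$ for $0<\gamma<d$, standard differentiation under the convolution shows $K\ast\psi_{0,2}^2$ and $K\ast(\psi_{0,1}\psi_{0,2})$ lie in $C^\infty(\rd)$ and are bounded by $C\langle x\rangle^{-\gamma}$ at infinity. Multiplying by the Schwartz factors $\psi_{0,1}$ or $\psi_{0,2}$ then produces functions in $\mathcal{S}(\rd)$, whence $g(0)\in\mathcal{S}(\rd)$.

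Finally, since $g(0)$ is a nonzero Schwartz function, its Fourier transform $\widehat{g(0)}$ is a nonzero element of $\mathcal{S}(\rd)\subset L^{p'}(\rd)$, so $\|g(0)\|_{\widehat{L}^p}=\|\widehat{g(0)}\|_{L^{p'}}>0$. The only step requiring any care is arranging that the Hartree and exchange contributions do not cancel; this is handled by the strict pointwise inequality $\psi_{0,1}<\psi_{0,2}$ on a set where $\psi_{0,2}>0$, which is preserved pointwise at $x=0$ thanks to the normalization $\psi_{0,1}(0)=\psi_{0,2}(0)$.
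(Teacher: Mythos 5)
Your proof is correct and follows essentially the same route as the paper: evaluate $g(0)$ at the origin for real, nonnegative, nested profiles normalized by $\psi_{0,1}(0)=\psi_{0,2}(0)=1$, and use positivity of $K(y)=|y|^{-\gamma}$ together with $\psi_{0,2}-\psi_{0,1}\geq 0$ (strict on a set of positive measure) to get $g(0)(0)\neq 0$, hence $\F g(0)\neq 0$ and $\n g(0)\n_{\widehat{L}^p}>0$. Your explicit Gaussian choice and the verification that $g(0)\in\mathcal{S}(\rd)$ are just concrete instances of what the paper asserts, so there is no substantive difference.
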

\begin{proof}
Note that
$ig(0)=\left(K*|\psi_{0,2}|^2\right)\psi_{0,1}-\left(K*\left((\psi_{0,1}\overline{\psi_{0,2}}\right)\right)\psi_{0,2}\in \mathcal{S}(\rd)$ for all $\psi_{0,1},\psi_{0,2}\in\mathcal{S}(\rd)$. Let us choose real valued $\psi_{0,1},\psi_{0,2}\in\mathcal{S}(\rd)$ (see Figure \ref{fig2}) such that
$\psi_{0,1}(0)=\psi_{0,2}(0)=1$ ,
 $0\leq\psi_{0,1}\leq\psi_{0,2}$,
 $\psi_{0,1}<\psi_{0,2}$ on a set of positive measure in $\rd$.
Then \begin{align*}
ig(0)(0)=\int_{\rd}K(-y)(\psi_{0,2}(y)-\psi_{0,1}(y))\psi_{0,2}(y)dy>0.
\end{align*}
Thus $g(0)\neq0$ and hence $\F g(0)\neq0$ in $\mathcal{S}(\rd)$. This completes the proof.
\end{proof}

\begin{Lemma}\label{g1}
Let $f\in\mathcal{S}(\rd\times\R)$. Then for all $s,t\in\R,$ we have
$\frac{\partial}{\partial t}U_\alpha(t-s)f(s)=-iU_\alpha(t-s)(- \Delta)^{\alpha/2}f(s).$
\end{Lemma}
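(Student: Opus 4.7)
The plan is to work on the Fourier side, where $U_\alpha(t)$ acts as a multiplication operator and the identity becomes transparent. Concretely, I would use the definition \eqref{sg}, namely
$$[U_\alpha(t-s)f(s)](x) = \int_{\rd} e^{-(2\pi)^\alpha i |\xi|^\alpha (t-s)}\, \widehat{f(s)}(\xi)\, e^{2\pi i \xi \cdot x}\, d\xi,$$
and differentiate in $t$ under the integral sign.

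First I would verify that differentiation under the integral is legitimate. Since $f(s) \in \mathcal{S}(\rd)$ uniformly in $s$ (here $s$ is fixed, so this is automatic), the integrand $e^{-(2\pi)^\alpha i |\xi|^\alpha (t-s)} \widehat{f(s)}(\xi) e^{2\pi i \xi \cdot x}$ and its $t$-derivative $-i(2\pi)^\alpha |\xi|^\alpha e^{-(2\pi)^\alpha i |\xi|^\alpha (t-s)} \widehat{f(s)}(\xi) e^{2\pi i \xi \cdot x}$ are both dominated by $|\xi|^\alpha |\widehat{f(s)}(\xi)|$, which is integrable because $\widehat{f(s)} \in \mathcal{S}(\rd)$. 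Hence dominated convergence applies and
$$\frac{\partial}{\partial t}[U_\alpha(t-s)f(s)](x) = \int_{\rd} -i(2\pi)^\alpha |\xi|^\alpha\, e^{-(2\pi)^\alpha i |\xi|^\alpha (t-s)}\, \widehat{f(s)}(\xi)\, e^{2\pi i \xi \cdot x}\, d\xi.$$

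Next I would use the definition of the fractional Laplacian, $\mathcal{F}[(-\Delta)^{\alpha/2} u](\xi) = (2\pi)^\alpha |\xi|^\alpha \widehat{u}(\xi)$, to rewrite the right-hand side of the target identity:
$$[-iU_\alpha(t-s)(-\Delta)^{\alpha/2} f(s)](x) = -i\int_{\rd} e^{-(2\pi)^\alpha i |\xi|^\alpha (t-s)}\, (2\pi)^\alpha |\xi|^\alpha\, \widehat{f(s)}(\xi)\, e^{2\pi i \xi \cdot x}\, d\xi.$$
This matches the expression obtained above, giving the identity pointwise. A standard Schwartz-class argument then promotes it to the stated equality of functions.

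There is no real obstacle here: the only subtlety is justifying the passage of the $t$-derivative through the integral, and this is handled by the rapid decay of $\widehat{f(s)}$. I would therefore simply present the two Fourier-side computations and observe their agreement.
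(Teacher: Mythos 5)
Your proof is correct. It takes a slightly different, more direct route than the paper: you differentiate under the integral sign in the Fourier representation \eqref{sg} of $U_\alpha(t-s)f(s)$, justify the interchange by dominated convergence using the rapid decay of $\widehat{f(s)}$, and match the result with the Fourier representation of $-iU_\alpha(t-s)(-\Delta)^{\alpha/2}f(s)$. The paper instead argues via the evolution equation: it sets $v_s(t)=U_\alpha(t)f(s)$, uses that $v_s$ solves $\partial_t v_s+i(-\Delta)^{\alpha/2}v_s=0$ with datum $f(s)$, evaluates at $t-s$, and then checks on the Fourier side that $(-\Delta)^{\alpha/2}$ commutes with $U_\alpha(t)$ so as to move the operator past the propagator. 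The two arguments are essentially equivalent in content — the paper's appeal to the PDE and the commutation identity are themselves Fourier-multiplier facts of exactly the kind you compute — but yours is more self-contained and makes the analytic justification (passing $\partial_t$ through the integral) explicit, whereas the paper's phrasing emphasizes the semigroup structure and the commutation $(-\Delta)^{\alpha/2}U_\alpha(t)=U_\alpha(t)(-\Delta)^{\alpha/2}$, which is the form in which the identity is actually used later. Either write-up is acceptable; only minor cosmetic point: your domination bound for the integrand itself should carry the constant $(2\pi)^\alpha$ (or simply note both integrands are bounded by $C(1+|\xi|^\alpha)|\widehat{f(s)}(\xi)|$), but this does not affect the argument.
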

\begin{proof}[{\bf Proof}]
Let $v_s(t)=U_\alpha(t)f(s) \ (s\in \R).$  Then $v_s$ solves
\begin{equation*}
 \partial_t v_s+i(- \Delta)^{\alpha/2}v_s=0,\quad
v_{s|t=0}=f(s).
\end{equation*}
Hence $\partial_t v_s(t)=-i(- \Delta)^{\alpha/2}v_s(t)$ for all $t\in\R$. Therefore
\begin{align}\label{a}
\frac{\partial}{\partial t}U_\alpha(t-s)f(s)&=\partial_t v_s(t-s)=-i(- \Delta)^{\alpha/2}v_s(t-s)=-i(- \Delta)^{\alpha/2}U_\alpha(t-s)f(s).
\end{align}
 Note that the operators $(-\Delta)^{\alpha/2}$ and $U_{\alpha}$ commute.  Indeed,  for $h\in\mathcal{S}(\rd),$ we have
\begin{align*}
\F\left[(- \Delta)^{\alpha/2}U_\alpha(t)h\right]&=(2\pi)^\alpha|\xi|^\alpha\F\left[U_\alpha(t)h\right]=(2\pi)^\alpha|\xi|^\alpha e^{-(2\pi)^\alpha it|\xi|^\alpha}\widehat{h}\\
&=e^{-(2\pi)^\alpha it|\xi|^\alpha}\F\left[(- \Delta)^{\alpha/2}h\right]=\F\left[U_\alpha(t)(- \Delta)^{\alpha/2}h\right]
\end{align*}which implies $(- \Delta)^{\alpha/2}U_\alpha(t)h=U_\alpha(t)(- \Delta)^{\alpha/2}h$. Now \eqref{a} gives the result.
\end{proof}
\subsection{Further Remarks}\label{fr} In this subsection we aim to  discuss several issues (as promised in Subsection \ref{shf}) for the ill-posednes for (\#).  For the convenience of the reader,  we briefly recall  ill-posedness strategy developed in \cite{bejenaru2006sharp}.
We consider the abstract equation
\begin{equation}\label{20}
u=Lf+\mathcal{N}(u,u,u).
\end{equation}
Here,   $f\in D$ (initial data space) and $u\in S$ (solution space). 
The linear operator $L:D\rightarrow S$ is densely defined,  and  tri-linear operator $\mathcal{N}:S \times S\times S\rightarrow S$ is also densely defined. 

 The Hartree-Fock equation \eqref{hf} maybe written as 
\[\psi_k(t)=U_{\alpha}(t)\psi_{0,k}+i\int_0^t U_\alpha(t-s)(H_\psi\psi_k)(s)ds-i\int_0^t U_\alpha(t-s)(F_\psi\psi_k)(s)ds\]
for $k=1,2,...,n.$ In this case,  we may take
\[L=(U_\alpha,\cdots,U_\alpha) \quad \text{and} \quad \mathcal N=(\mathcal N^1,\cdots,\mathcal N^N)\]
where,  for $k=1,2,..., N,$ we put
$$\mathcal N^k(\psi,\psi,\psi)=i\int_0^t U_\alpha(t-s)(H_\psi\psi_k)(s)ds-i\int_0^t U_\alpha(t-s)(F_\psi\psi_k)(s)ds.$$ 
Thus,  \eqref{hf} can be rewritten as in the form  \eqref{20} namely \begin{equation*}\label{P2}
\psi=L\psi_0+\mathcal N(\psi,\psi,\psi)
\end{equation*} where $\psi=(\psi_1,\cdots, \psi_{N})$ and $\psi_0=(\psi_{0,1}, \cdots \psi_{0,N})$.
We say  \eqref{20} is \textbf{quantitative well-posed} in the Banach  spaces $D, S$ if
the following estimates
\[\|L(f)\|_{S} \lesssim  \|f\|_{D} \quad \text{and} \quad \|\mathcal{N}(u_1, u_2, u_3)\|_{S}\lesssim \|u_1\|_{S}\|u_2\|_{S}\|u_3\|_{S}. \]

We define the non-linear  maps (Picard's iterations) $A_n:D\to S$ for $n=1,2,...$ by the recursive formulae
\[A_1(f)=L(f)\]
\[A_n(f)=\sum_{n_1,n_2,n_3\geq 1 \atop{n_1+n_2+n_3=n}}\mathcal{N}(A_{n_1}(f),  A_{n_2}(f), A_{n_3}(f)) \quad \text{for} \ n>1.\]
The power series $\sum_{n=1}^{\infty}A_n(f)$   gives 
 solution to \eqref{20},  i.e.,  $u[f]=\sum_{n=1}^{\infty}A_n(f)$ whenever \eqref{20} is quantitative well-posed in the Banach  spaces $D, S.$ See \cite[Section 3]{bejenaru2006sharp} for details.

\begin{proposition}[Proposition 1 in \cite{bejenaru2006sharp}]\label{P1}
Suppose that \eqref{20} is quantitatively  well-posed in the Banach spaces $D, S$, with a solution map $f\mapsto u[f]$ from a ball $B_D$ in $D$ to a ball $B_S$ in $S$. Suppose that these spaces are then given other norms $D'$ and $S'$, which are weaker than $D$ and $S$ in the sense that
\[\|f\|_{D'} \lesssim\|f \|_D \quad \text{and} \quad \|u\|_{S'}\lesssim\|u\|_S.\]
  Suppose that the solution map $f\mapsto u[f ]$ is continuous from $(B_D , \|\|_{D'})$ (i.e. the ball $B_D$ equipped with the $D'$ topology) to $(B_S , \|\|_{S'})$ . Then for each $n$, the non-linear operator $A_n : D \to S$ is continuous from $(B_D,\|\|_{D'})$ to $(S, \|\|_{S'} )$. 
\end{proposition}

It's worth noting that if \eqref{20} is quantitatively well-posed from $D$ to $S$, Proposition \ref{P1} allows us to prove ill-posedness in coarse topologies $D',S'$ by demonstrating that at least one of the $A_n$ is discontinuous from $D'$ to $S'$.  We are now ready make several comments on proof of Theorem \ref{ill1}.

\begin{enumerate}
\item  In the proof of Theorem \ref{ill1},  for $k=1,2$, $\n \psi_{0,k}^h\n_{L^2}=h^{d/p- d/2}\n\psi_{0,k}\n_{L^2}\to\infty$ as $h\to0$.   Hence  data sequence $\{\psi_{0,k}^h\}$  leaves any ball in $\widehat{L}^p\cap L^2(\rd)$. Therefore, one cannot apply  Proposition \ref{P1} with this choice of data sequence to conclude that the solution map is discontinuous in a ball in $(\widehat{L}^p\cap L^2(\rd))^N$ with $\|\|_{(\widehat{L}^p)^N}$-topology.  However, recently we have studied on stronger form of ill-posedness (norm inflation with infinite loss of regularity) in Fourier Lebesgue spaces with negative regularity,  see \cite{bhimani2021strong}.
\item \label{21} In the proof of Theorem \ref{ill1},  $\mathcal{A}=(\mathcal{A}_1,\cdots,\mathcal{A}_N)$ is actually the third Picard iterate $A_3$ (upto a nonzero constant) and  we showed that estimate \eqref{Z} fails.  Hence,  by the  method of contradiction and using Theorem 3 in \cite{bejenaru2006sharp}, it follows that (\#) is not quantitatively well-posed form $(\widehat{L}^p)^N$ to $S$ for any subspace $S\hookrightarrow C([0,T),(\widehat{L}^p)^N)$ for any $T>0$. In particular, the estimates  $\|\mathcal{N}(\psi_1, \psi_2, \psi_3)\|_{C([0,T),(\widehat{L}^p)^N)}\lesssim \prod_{j=1}^3\|\psi_j\|_{C([0,T),(\widehat{L}^p)^N)}$ fails.
\item In view of Lemma \ref{lhe} \eqref{lhec}, (\#) is qualitatively well-posed from $(\widehat{L}^p\cap L^2)^N$ to $C([0,T),(\widehat{L}^p\cap L^2)^N)$.  On the other hand,  above point \eqref{21} says this fails if one works with mere $\widehat{L}^p$ instead of $\widehat{L}^p\cap L^2$. Thus we find some form of sharpness at the level of quantitative well-posedness between the spaces $\widehat{L}^p\cap L^2$ and $\widehat{L}^p$ for (\#). Similar sharpness at the level of well-posedness (in the sense of Hadamard) remains open, i.e.  whether one has a failure of continuity of the solution map from $(\widehat{L}^p)^N$ to $C([0,T),(\widehat{L}^p)^N)$.
\end{enumerate}

\noindent 
\textbf{Acknowledgement}: Both authors are  thankful to Prof. K. Sandeep   for the encouragement  during this project and his  thoughtful suggestions on the preliminary draft of this paper.  D.G. B is  thankful to DST-INSPIRE (DST/INSPIRE/04/2016/001507) for the financial  support.  Both authors are  thankful to TIFR CAM for the  excellent  research facilities. Both authors are also thankful to Prof. R\'emi Carles for  his thoughtful  suggestions and for bringing to our notice the reference \cite{hayashi1998asymptotics}.  Both authors are thankful to Prof. H.  Hajaiej for the encouragement and thoughtful  suggestions.  We are grateful to an anonymous referee for pointing out the error in the previous version of this paper. {This has helped us to improve the presentation of Section \ref{pill1}.}

\bibliographystyle{siam}
\bibliography{dgbsh}
\end{document}